\documentclass[11pt]{amsart}
\usepackage{amsmath}
\usepackage{amsthm}
\usepackage{amsfonts}
\usepackage{amssymb}
\usepackage{verbatim}
\usepackage{graphicx}
\usepackage[margin=1.0in]{geometry}
\usepackage{url}
\usepackage{enumerate}
\usepackage[pdftex,bookmarks=true]{hyperref}

\usepackage{color}

\usepackage[normalem]{ulem}
\newcommand\ep{\varepsilon}

\newcommand\R{{\mathbb{R}}}

\newcommand\Z{{\mathbf{Z}}}

\renewcommand\P{{\mathbf{P}}}
\newcommand\E{{\mathbf{E}}}

\newcommand\Bx{{\mathbf x}}

%


\newcommand\CB{{\mathcal B}}

\newcommand\CE{{\mathcal E}}







\parindent = 0 pt
\parskip = 12 pt

\theoremstyle{plain}
  \newtheorem{theorem}[subsection]{Theorem}

  \newtheorem{fact}[subsection]{Fact}
  \newtheorem{lemma}[subsection]{Lemma}
  \newtheorem{corollary}[subsection]{Corollary}

  \newtheorem{remark}[subsection]{Remark}
  
  \newtheorem{claim}[subsection]{Claim}

\theoremstyle{definition}
  \newtheorem{definition}[subsection]{Definition}

\include{psfig}

\include{psfig}

\author{Yen Do}
\email{yen.do@virginia.edu}
\address{Department of Mathematics, The University of Virginia, Charlottesville, VA 22904}

\author{Hoi Nguyen}
\email{nguyen.1261@math.osu.edu}
\address{Department of Mathematics, The Ohio State University, Columbus, OH 43210}

\author{Van Vu}
\email{van.vu@yale.edu}
\address{Department of Mathematics, Yale University, New Haven, CT 06511}

\thanks{Y. Do is supported in part by research grant DMS-1201456.}
\thanks{H. Nguyen is supported by research grant DMS-1358648}
\thanks{V. Vu is supported by research grants DMS-0901216 and AFOSAR-FA-9550-12-1-0083}


\begin{document}

\title{Real roots of random polynomials: expectation and repulsion}

\begin{abstract} Let $P_{n}(x)= \sum_{i=0}^n \xi_i x^i$ be a Kac random polynomial where the coefficients $\xi_i$ are iid copies of a given random variable $\xi$.  Our main result   is an optimal quantitative bound concerning real roots repulsion. This leads to an optimal  bound on the probability that there is a double root. 

As an application, we consider the problem of estimating  the number of real roots of $P_n$, which has a long history and in particular was the main subject of a celebrated series of papers by 
Littlewood and Offord from the 1940s.  We show, for a large and natural family of atom variables $\xi$,  that  the expected number of real roots of $P_n(x)$ 
is exactly  $\frac{2}{\pi} \log n +C +o(1)$, where $C$ is an absolute constant depending on  the atom variable $\xi$.  Prior to this paper, such a result was 
known only for the case when $\xi$ is Gaussian. 
\end{abstract}

\maketitle

\section{Introduction} \label{section:intro}


Let $\xi$ be a real random variable having no atom at   0, zero mean and unit variance. Our object of study is  the random  polynomial 
\begin{equation} \label{defP} P_{n}(x) := \sum_{i=0}^n \xi_i x^i \end{equation}  where $\xi_i$ are iid copies of $\xi$. This polynomial is often referred to as Kac's polynomial, and has been extensively investigated 
in the literature.

\subsection{Real roots of random polynomials} The study of real roots of random polynomials has a long history.  Let  $N_n$ be the number of real roots of $P_n(x)$, sometimes we use the notation $N_{n,\xi}$ to emphasize the dependence of $N_n$ on the distribution of $\xi$.  This is a random variable taking values in $\{0,\ldots,n\}$.  The issue of estimating  $N_n$ was already raised by Waring as far back as 1782 (\cite[page 618]{To}, \cite{Kostlan}), and has generated a large amount of literature, of which we  give an (incomplete and brief) survey. 

  One of the first estimates for $N_n$ was obtained  by Bloch and P\'olya \cite{BP}, who  studied the case when $\xi$ is uniformly distributed in $\{-1,0,1\}$, and established the  upper bound
$$ \E N_n =O( n^{1/2}). $$ Here and later we'll  use the usual asymptotic notation $X=O(Y)$ or $X \ll Y$ to denote the bound $|X| \leq CY$ where $C$ is independent of $Y$. 
 The above bound of Bloch and P\'olya was  not sharp, as it turned out later that $P_n$ has  a remarkably small number of real roots.  In  a  series of  breakthrough papers \cite{lo,  lo-3, lo-4, lo-2} in the early 1940s, Littlewood and Offord proved 
 (for  many  atom variables  $\xi$  such as Gaussian, Bernoulli or uniform on $[-1,1]$) that 
$$ \frac{\log n}{\log \log \log n} \ll N_n \ll \log^2 n $$
with probability $1-o(1)$, where we use $o(1)$ to denote a quantity that goes to $0$ as $n \to \infty$.
  
Around the same time, Kac \cite{kac-0} developed a  general formula for the expectation of number of real roots
\begin{equation} \label{Kacformula}     \E N_n = \int_{-\infty} ^ {\infty} dt \int_{- \infty} ^{\infty} |y| p(t,0,y) dy,
\end{equation}  where 
$p(t,x,y)$ is the probability density for $P_n (t) =x$ ad $P'_n (t) =y$.

In the Gaussian case, one can easily evaluate the RHS and get 
\begin{equation}  \label{Kacformula2}   \E N_n = \frac{1}{\pi} \int_{-\infty} ^{\infty} \sqrt { \frac{1}{(t^2-1) ^2} + \frac{(n+1)^2 t^{2n}}{ (t^{2n+2} -1)^2}  }  dt       =   (\frac{2}{\pi} +o(1)) \log n. \end{equation}

For non-Gaussian distributions, however, Kac's formula is often very hard to evaluate and  it took a considerable amount of work to extend  \eqref{Kacformula2} to  other distributions.  In a subsequent paper \cite{kac-1}, Kac himself handled the case when  $\xi$ is uniformly distributed  on the interval  $[-1,1]$ and  Stevens \cite{Stev}  extended it further  to cover  a large class  of  $\xi$ having continuous and smooth  distributions
with certain regularity properties (see \cite[page 457]{Stev} for details). These papers
relies on \eqref{Kacformula} and  the analytic properties of the distribution of $\xi$.

For discrete distributions, \eqref{Kacformula} does not appear useful and it took more than 10 years since Kac's paper  until Erd\H{o}s and Offord  in 1956 \cite{EO} found a 
completely new approach  to handle the Bernoulli case.  For this case, they  proved that with probability $1- o(\frac{1} {\sqrt {\log \log n} } )$
\begin{equation} 
N_{n, \xi}  = \frac{2}{\pi} \log n + o(\log^{2/3} n \log \log n). 
\end{equation}

In  the  late 1960s and early 1970s,  Ibragimov and Maslova \cite{IM1, IM2}  successfully refined  Erd\H{o}s-Offord's method to handle 
any variable $\xi$ with mean 0.  They proved  that for any $\xi$ with mean zero which belong to the domain of attraction of the normal law, 
\begin{equation}  \label{IM-1} 
\E  N_{n, \xi}  = \frac{2}{\pi} \log n + o(\log n) .
\end{equation} 

 For related results, see also \cite{IM3, IM4}. 
Few years later, Maslova  \cite{Mas1, Mas2} showed that if $\xi$ has mean zero and variance one and $\P (\xi =0) =0$, then the variance of $N_{n, \xi}$ is 
$(\frac{4}{\pi} (1- \frac{2}{\pi} )+o(1) ) \log n$.

Other developments were made in the late 1980s by Wilkins \cite{Wil} and in the early 1990s by Edelman and Kostlan \cite{EK}, who evaluated  the explicit integral in \eqref{Kacformula2} very carefully
and provided a precise estimate for  $\E N_{n, N(0,1)}$ 
\begin{equation} \label{eqn:EK} 
  \E N_{n, N(0,1) }  =  \frac{2}{\pi} \log n  + C_{Gau}  +o(1). 
\end{equation}

where $C_{Gau} \approx .625738072..$ is an explicit  constant (the value of an explicit, but complicated integral).  
As  a matter of fact, one  can even write $o(1)$ as sum of explicit functions of $n$, which gives a complete Taylor expansion.


The   truly  remarkable fact about  \eqref{eqn:EK}  is that 
the error term $\E N_{n, N(0,1)} - \frac{2}{ \pi} \log n $ tends to a limit as $n$ tends to infinity. The question here is:   Is this  a {\it universal phenomenon}, which holds for 
general random polynomials, or a special property of the Gaussian one ?

It is clear that the computation leading  to \eqref{eqn:EK} is not applicable for general random polynomials, as 
 the explicit formula  in \eqref{Kacformula2}  is available only in the Gaussian case, thanks 
 to the unitary invariance property of this particular distribution. For many natural variables, such as Bernoulli, there is  little hope that 
 such an explicit formula actually exists.   As a matter of fact, Ibragimov-Maslova's proof of  their  asymptotic result  for general non-Gaussian polynomials 
 (based on the earlier work of Erd\H{o}s-Offord) is a tour-de-force. Among others, they followed the Erd\H{o}s-Offord's idea of using  the number of sign changes on a fixed sequence of points to approximate the number of 
 roots. The  error term in this approximation, by nature, has a be large (at least a positive power of $\log n$).

 On the other hand, numerical evidence tends  to support 
the  conjecture that $\E N_n - \frac{2}{\pi} \log n$ do go  to a limit, as $n$ to tends to infinity. However,  the situation is delicate as this limit seems to  depend on the distribution of 
the atom variable $\xi$ and {\it is not}  universal; see the numerical illustration below. 

\begin{figure}[h]
    \centering
    \includegraphics{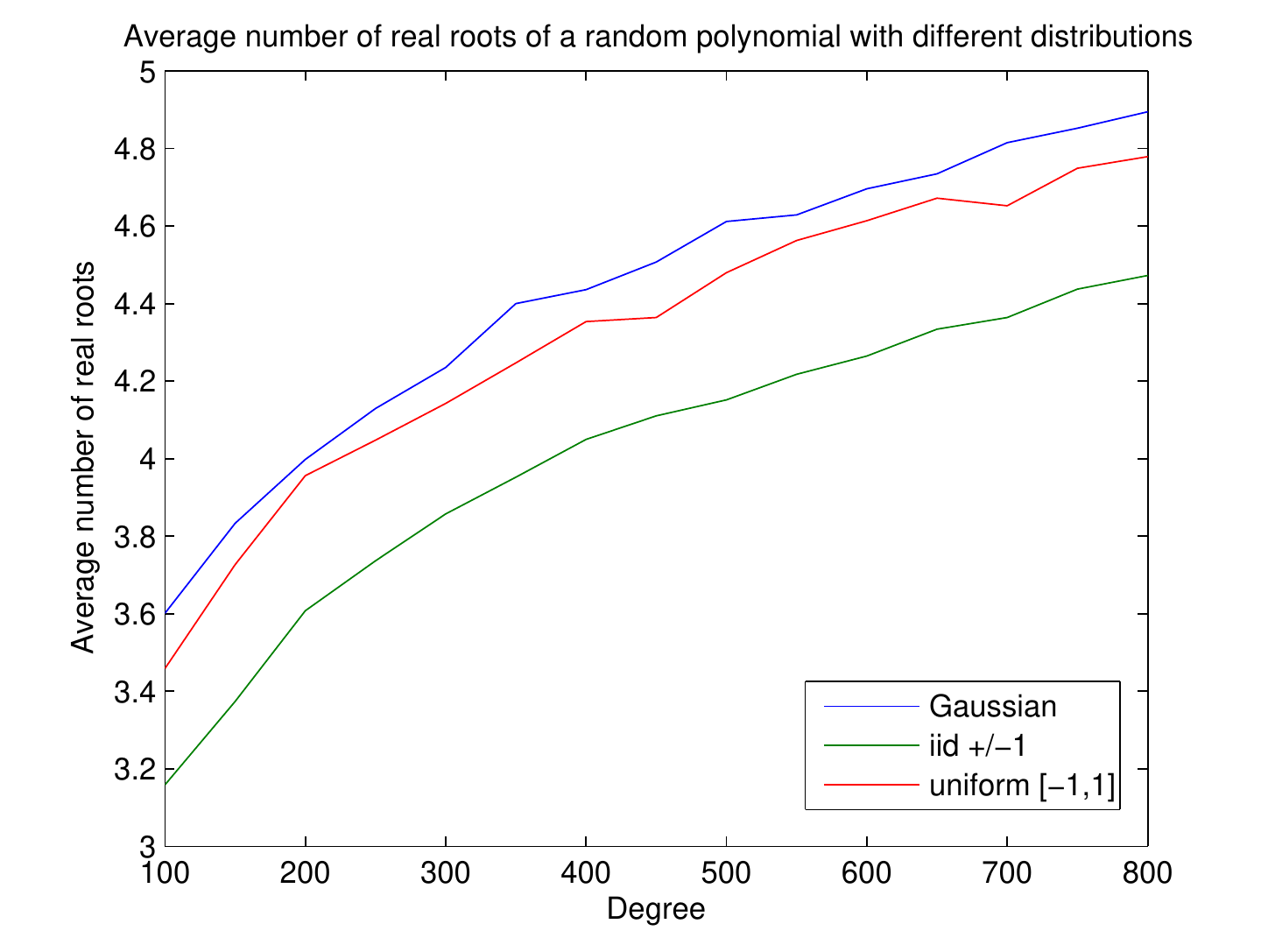}
    \caption{The figure  shows average numbers of real roots of random polynomials with different distributions. One can see the shape of 
the curve $\frac{2}{\pi} \log n$ in all three cases.}
    \label{fig:1}
\end{figure}


\begin{figure}[h]
    \centering
    \includegraphics{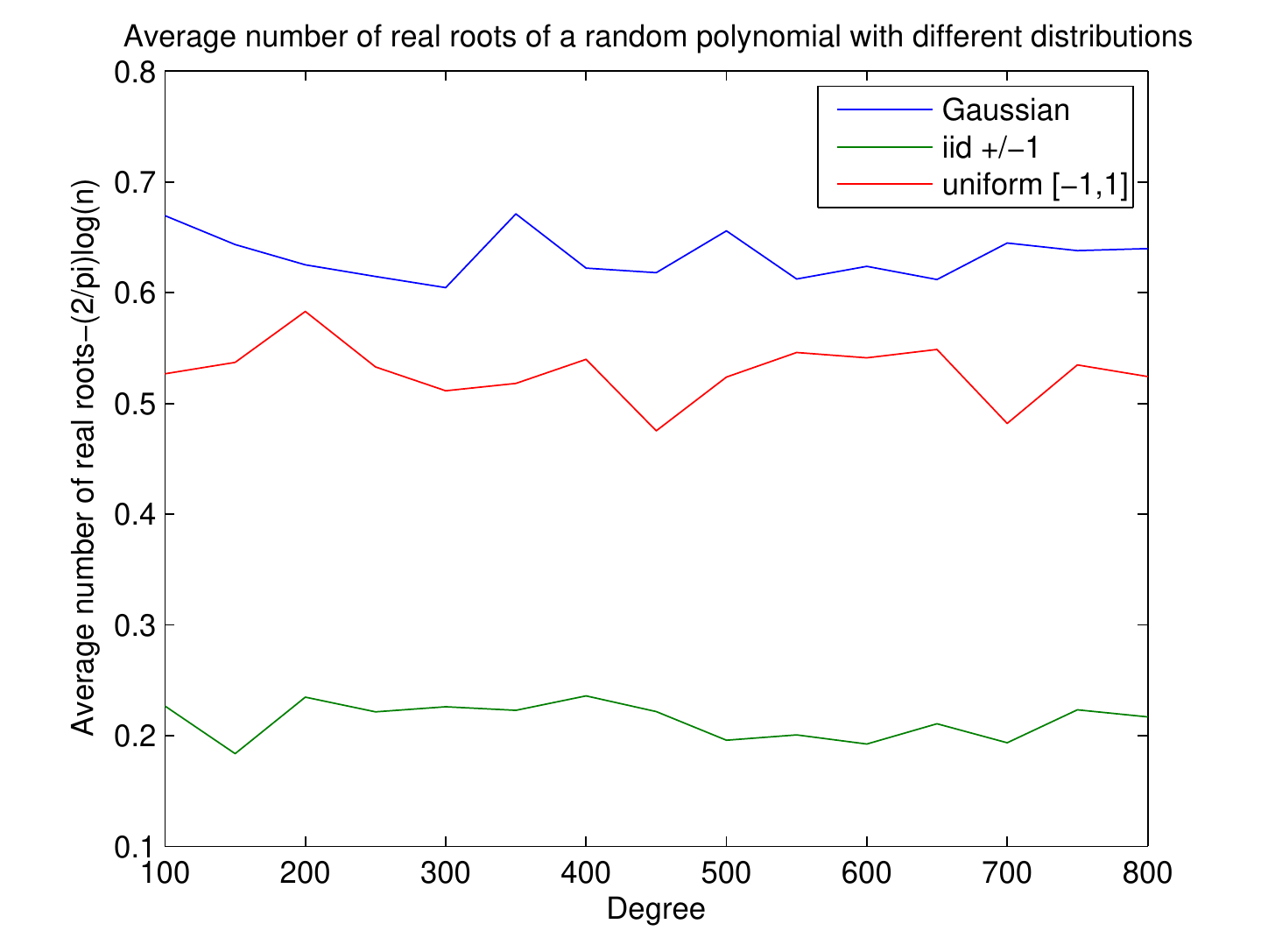}
    \caption{In this second figure, we subtract $\frac{2}{\pi} \log n$ from the averages, and the curves seem to converge to different values.}
    \label{fig:2}
\end{figure}



In a recent work  \cite{NgOV}, the last two authors and Oanh Nguyen made a first step by showing  that the error term in question is bounded.

\begin{theorem}  Let $\xi$ be a random variable with mean 0 and variance 1 and bounded $(2 +\epsilon)$-moment. Then 
\begin{equation}\label{eqn:NgOV} 
| \E N_{n, \xi}  - \frac{2}{\pi} \log n |  = O_{\epsilon, \xi} (1). 
\end{equation}

\end{theorem}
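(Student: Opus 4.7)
The plan is to leverage the Gaussian estimate \eqref{eqn:EK} via a universality-type comparison, showing $|\E N_{n,\xi} - \E N_{n,g}| = O_{\epsilon,\xi}(1)$ for $g \sim N(0,1)$; combined with \eqref{eqn:EK} this yields \eqref{eqn:NgOV}. Using the symmetries $x \mapsto -x$ and $x \mapsto 1/x$, I reduce the problem to estimating the expected number of real roots in $[0,1]$, and further split this interval into a bulk $[0, 1 - C/n]$ and an edge $[1 - C/n, 1]$, since almost all real roots of $P_n$ concentrate in a window of length $O(1/n)$ around $\pm 1$.

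On the bulk, I would use a smoothed Kac--Rice formula,
\[
\E N_n([a,b]) = \lim_{\delta \to 0^+} \int_a^b \E\left[|P_n'(t)| \cdot \frac{1}{2\delta} \mathbf{1}_{|P_n(t)| \le \delta}\right] dt,
\]
expressing the intensity in terms of the joint distribution of $(P_n(t), P_n'(t))$. This pair is a sum of many independent vectors, so a quantitative local CLT (an Edgeworth or Berry--Esseen argument exploiting the $(2+\epsilon)$-moment assumption) should yield a pointwise comparison between the $\xi$- and Gaussian-intensities whose error, after integrating over the bulk, is $O(1)$. On the edge, I rescale $t = 1 - s/n$: the process $n^{-1/2} P_n(1 - s/n)$ converges to a Gaussian process on $s > 0$ by a functional CLT, so the expected number of roots in the edge window is $O(1)$ uniformly in the atom distribution and matches the Gaussian count up to $O(1)$.

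The main obstacle will be producing the density comparison with a uniformly integrable error across the bulk. The covariance matrix of $(P_n(t), P_n'(t))$ becomes nearly degenerate as $t \to 1$, which forces strong quantitative anti-concentration bounds for the linear combinations $\sum a_i \xi_i$ appearing in $P_n(t)$ and $P_n'(t)$ — precisely in the spirit of the real-root repulsion estimates that are the focus of this paper. A secondary technical issue is that for discrete $\xi$ no joint density exists, so the smoothing parameter $\delta$ must be chosen as an explicit function of $n$ in such a way that the smoothing error and the local CLT error are balanced and both remain $o(1)$ once integrated over the bulk.
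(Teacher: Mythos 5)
This statement is not proved in the paper at all: it is quoted from \cite{NgOV}, and both the argument there and the refinement in Section \ref{section:expectedvalue:2} proceed via Tao--Vu local universality on intervals $(1-r,a)\subset(1-n^{-\ep},1]$ (Theorem \ref{theorem:TV}), Jensen-type bounds for the bulk (Lemma \ref{lemma:Jensen}), and comparison with truncated polynomials --- not via a Kac--Rice formula. Your route is therefore genuinely different, and its central step has a gap. The smoothed Kac--Rice plus local CLT comparison in the bulk cannot be carried out under only a bounded $(2+\epsilon)$-moment hypothesis, because the hypothesis includes discrete atoms such as Bernoulli, and then for fixed $t\in(0,1)$ bounded away from $1$ the pair $(P_n(t),P_n'(t))$ satisfies no local limit theorem at fine scales: $\sum_i \xi_i t^i$ converges to a Bernoulli convolution, which is singular for suitable $t$ (reciprocals of Pisot numbers), and even at benign points it is not Gaussian-like (at $t=1/2$ the limit law is uniform on $[-2,2]$, whose density differs from the matching Gaussian one by a constant factor). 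Hence there is no pointwise comparison of root intensities with an error that integrates to $O(1)$ against the measure $\frac{dt}{1-t}$ up to $1-C/n$; Berry--Esseen/Edgeworth bounds control the distribution at macroscopic scales only and say nothing at scale $\delta=n^{-\Theta(1)}$. Moreover, once $\delta$ is kept positive, the smoothed functional is no longer the number of roots, and controlling the discrepancy (values $|P_n|\le\delta$ without a sign change, several roots or a near-double root inside a $\delta$-window) is precisely the small-ball/repulsion problem that is the main technical content of this paper (Lemma \ref{l.almost-zero}, proved via inverse Littlewood--Offord machinery); it cannot be relegated to a ``secondary technical issue'' of balancing $\delta$ --- it is the heart of the matter, and even granted it, the fine-scale law in the bulk is simply not Gaussian pointwise, which is why the known proofs compare root \emph{counts over intervals} (universality near $1$, where the number of comparable terms is large) rather than joint densities at a point.

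Two further problems. Your justification for the splitting is backwards: the expected number of real roots in $[1-C/n,1]$ is $O(1)$, while the main term $\frac{1}{2\pi}\log n$ for $(0,1)$ comes from the bulk, where the intensity is $\sim\frac{1}{\pi(1-t^2)}$; almost all roots lie within distance $n^{-o(1)}$ of $\pm1$, not within $O(1/n)$. And at the edge, a functional CLT for $n^{-1/2}P_n(1-s/n)$ gives convergence in distribution of the local root count but not of its expectation; with only $(2+\epsilon)$ moments one needs a uniform-integrability or Jensen-type estimate (this is exactly Lemma \ref{lemma:Jensen}, imported from \cite{NgOV}) to conclude that the edge window contributes $O(1)$. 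To salvage your scheme you would need, at minimum, anti-concentration estimates of the strength of Lemma \ref{l.almost-zero} for general $(2+\epsilon)$-moment atoms together with a quantitative universality statement valid after averaging in $t$ over each dyadic scale $1-t\asymp r$ with error $O(r^{\alpha})$ --- at which point you have essentially reconstructed the Tao--Vu comparison plus the truncation/repulsion argument of this paper rather than a self-contained Kac--Rice proof.
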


The approach in \cite{NgOV}, however,  reveals little about the behavior  of the bounded quantity $O_{\epsilon, \xi} (1)$. 

In this paper, we settle this problem in the affirmative for a large and natural class of distributions, as an application of a general theorem concerning the repulsion between  real roots of 
Kac's polynomials (see the next section).



\begin{definition}[Type I, discrete distributions]\label{d.uniform-N}
For any positive integer $N$, we say that $\xi$ has  uniform distribution with parameter $N$ (or {\it type I}) if $\P(\xi=i)=1/(2N)$ independently, $i\in \{\pm 1, \pm 2, \dots, \pm N\}$. In particularly, Bernoulli random variable has uniform distribution with parameter 1.
\end{definition} 

\begin{definition}[Type II, continuous distributions]\label{d.p} Let $\ep_0>0$ and $p>1$. We say that a random variable $\xi$ of mean zero has {\it type II} distribution with parameter $(p,\ep_0)$ if its has a $p$-integrable density function and its
$(2+\ep_0)$-moment is bounded.
\end{definition}

\begin{theorem}[Main application: Expectation of real roots] \label{theorem:expectedvalue:2} 
Let $\xi$ be a random variable with either type I or type II with fixed parameters. Then
$$\E N_{n,\xi}  =  \frac{2}{\pi} \log n  + C + o(1),$$
where $C$ is an absolute constant depending on $\xi$.  
\end{theorem}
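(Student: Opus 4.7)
The plan is to refine the boundedness bound $\E N_{n,\xi} - \frac{2}{\pi}\log n = O_{\xi}(1)$ of \cite{NgOV} into a convergence statement, by decomposing $[-1,1]$ into three regimes and analyzing each separately. Throughout, the paper's repulsion theorem is invoked to justify approximating the root count by a count of sign changes on a fine grid, since the probability of two roots lying in one cell of the grid can be made $o(1)$ if the grid is chosen sufficiently fine (and geometrically spaced toward $\pm 1$). By the symmetry $P_n(x) \leftrightarrow x^n P_n(1/x)$, whose coefficient vector has the same joint law as that of $P_n$, the contribution from $|x|>1$ equals that from $|x|<1$, so it suffices to analyze roots in $[-1,1]$. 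I would split this as
\[
[-1,1] = \{|x|\le 1-\delta\} \,\cup\, \{1-\delta<|x|\le 1-A/n\} \,\cup\, \{1-A/n<|x|\le 1\},
\]
with $\delta>0$ small and $A>0$ large, the limits $\delta\to 0$ and $A\to\infty$ to be taken after $n\to\infty$.

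In the annular and edge regions I would bucket each interval into pieces $I_j$ of geometrically decreasing length, chosen so that (a) the repulsion theorem guarantees $N_n(I_j)\in\{0,1\}$ with probability $1-o(|I_j|)$, making the root count agree with the sign-change count summed over $j$, and (b) the joint law of $(P_n(t_j),P_n(t_{j+1}))$ is close in total variation to a bivariate Gaussian with the same covariance, via a quantitative two-dimensional CLT. Such a CLT is available under type I (through small-ball/anticoncentration estimates for sums of discrete variables) and under type II (through the $p$-integrable density assumption, which propagates to the density of the sum). The Gaussian sign-change probability can be computed via Kac--Rice, and summing over dyadic scales produces the contribution $\frac{2}{\pi}\log(\delta n/A)+c_2(\delta)+c_3(A)+o(1)$ with $c_2,c_3$ independent of $\xi$.

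The $\xi$-dependent constant emerges from the interior region $\{|x|\le 1-\delta\}$. Here $P_n$ converges, as a random analytic function on the disk of radius $1-\delta/2$, to the random power series $F_\xi(x)=\sum_{i=0}^\infty \xi_i x^i$, whose distribution visibly depends on $\xi$. I would show $\E N_n(\{|x|\le 1-\delta\}) \to \E N_{F_\xi}(\{|x|\le 1-\delta\})$ by combining (i) weak convergence of the real root-counting measure of $P_n$ to that of $F_\xi$, which requires that $F_\xi$ have only simple real roots almost surely (a consequence of a limiting version of the repulsion theorem) and that small perturbations do not create or destroy roots, and (ii) uniform integrability via a second moment bound on $N_n$, again controlled by repulsion. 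Letting $\delta\to 0$ and then $A\to\infty$ at appropriate rates, the logarithmic divergences in $c_2(\delta)$ and in $\E N_{F_\xi}(\{|x|\le 1-\delta\})$ cancel and we obtain $\E N_{n,\xi}=\frac{2}{\pi}\log n + C + o(1)$ with
\[
C \;=\; 2\lim_{\delta\to 0}\Bigl(\E N_{F_\xi}(\{|x|\le 1-\delta\}) - \tfrac{1}{\pi}\log\tfrac{1}{\delta}\Bigr) \;+\; \text{universal terms}.
\]

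I expect the main obstacle to be the passage to the limit in the interior region. The real-root counting functional is not continuous on the space of analytic functions, so convergence in distribution of $P_n$ to $F_\xi$ does not automatically yield convergence of expectations; the repulsion theorem supplies exactly the quantitative non-clustering needed to preclude gain or loss of roots under small perturbations and to obtain the required uniform integrability. A secondary difficulty is the bookkeeping at the interface $|x|=1-\delta$: the cancellation of the logarithmic divergences in $c_2(\delta)$ and in the $F_\xi$ contribution must be made uniform in $n$, and the $o(1)$ errors from the CLT and from repulsion must be summed over $O(\log n)$ dyadic scales without accumulating.
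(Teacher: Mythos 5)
There is a genuine gap, and it sits in your annular/edge region. Your plan there is to count sign changes on a fine grid and compare the law of $(P_n(t_j),P_n(t_{j+1}))$ with a bivariate Gaussian via a quantitative CLT; this is precisely the Erd\H{o}s--Offord/Ibragimov--Maslova scheme, and it cannot deliver a $C+o(1)$ result: the comparison errors (both the CLT error per cell and the discrepancy between sign changes and root counts) are of constant order per dyadic scale unless the mesh is taken much finer than the local root spacing, and they must then be summed over $\Theta(\log n)$ scales, which is exactly why that method historically produces error terms that are positive powers of $\log n$. Your appeal to the repulsion theorem to close the sign-change/root-count gap does not work at the relevant scales: the repulsion theorem only excludes pairs of roots at distance $\le n^{-B}$ (with probability $1-n^{-C}$), whereas the discrepancy comes from pairs at distance comparable to the grid mesh, i.e.\ a fixed fraction of the local inter-root spacing, which is vastly larger than $n^{-B}$ and occurs with probability that is only polynomially small in the mesh ratio, not in $n$. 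Controlling that requires genuine two-point (correlation-function) universality, which the paper does not attempt to reprove: it imports the Tao--Vu local universality theorem (Theorem \ref{theorem:TV}) giving $\E N_{n,\xi}(1-r,a)=\E N_{n,N(0,1)}(1-r,a)+O(r^\alpha)$ on intervals inside $(1-n^{-\ep},1]$, propagates it across scales by a truncation comparison (Lemma \ref{lemma:comparison}, which is where the repulsion/delocalization theorem is actually used), and then evaluates the Gaussian count by the Edelman--Kostlan formula. Two further inaccuracies in your sketch: for type I (discrete) coefficients the total-variation distance of $(P_n(t_j),P_n(t_{j+1}))$ to any Gaussian is $1$, so TV-closeness is the wrong notion; and the constants $c_2(\delta)$, $c_3(A)$ you claim are independent of $\xi$ are not, for fixed $\delta$ --- at scales $1-x\asymp\delta$ the law of $P_n(x)$ is genuinely non-Gaussian and $\xi$-dependent, and universality only holds up to errors that vanish as the scale tends to $0$ (in the paper this appears as the $O(C_0^{-\alpha})$ term, removed only in the final limit $C_0\to\infty$).

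Your treatment of the interior region is closer to viable: the paper likewise isolates the $\xi$-dependent constant there, but rather than proving convergence of $\E N_n$ to the expected root count of the limiting series $F_\xi$ (weak convergence plus uniform integrability), it shows directly that $\E N_{n,\xi}(0,1-C_0^{-1})$ is a Cauchy sequence in $n$ by comparing $P_n$ with its truncation $P_m$, $m\asymp C_0\log n$, with error $O(m^{-2})$; this comparison is exactly where parts (i)--(iii) of Theorems \ref{theorem:repulsion:general} and \ref{theorem:repulsion:general'} are consumed, together with the elementary root-stability Lemma \ref{approximation}. So your interior step could probably be repaired along those lines, but without an ingredient of the strength of Theorem \ref{theorem:TV} the edge analysis in your proposal does not reach $o(1)$ accuracy, and that is the heart of the theorem.
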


We would like to mention that due to the abstract nature of our arguments, the exact value of $C$ is still unknown, and its determination remains a tantalizing question.  In our favorite toy case when $\xi$ is Bernoulli, computer simulation suggests that  $C$ is  around  $.22$, but  it looks already difficult to prove  that $|C| \le 10$ (say).






Now we are going to present the  main  technical contribution of our paper, which, together with some other tools, will yield  Theorem \ref{theorem:expectedvalue:2}  as an application.
The object of study here is  the existence of double roots, or more generally, the repulsion between real roots of Kac polynomials.

\subsection{Roots repulsion of random polynomials}  Multiple roots is a basic  object in theoretical analysis. They 
also play an important role in practical computational problems.  For example, it is  a fundamental fact in numerical analysis that 
 the running time of  Newton's method (for finding real roots of a function) decreases exponentially  with the presence of a multiple or near multiple root (i.e., a place $x$ where both $|P(x)|$ and $|P'(x)|$ are close to zero), see for instance \cite[Chapter 8]{BCSS}. 
 
 Intuitively, one expects that a random polynomial does not have multiple or near multiple roots, with high probability. 
Motivated by problems in complexity theory,  researchers have  confirmed this intuition for the case when  $\xi$ is Gaussian, in a more general setting (see   \cite[Chapter 13]{BCSS} and \cite{CKMW}). Unfortunately, the methods in these works rely on the invariance property of the gaussian distribution and do not extend to other natural distributions. 

We are going to introduce  a new approach that enables us to fully understand the 
double root phenomenon, and consequently 
 derive an optimal bound on the probability that a random polynomial has double or near double real  roots.

 For the sake of presentation,  let us first consider the toy case when $\xi$ is Bernoulli. One expects that the probability of having  double roots tends to 0 with $n$; the question is how fast ?   
To give a lower bound, let us  consider $p_{\pm 1}$, the probability that the polynomial has a double root at either $1$ or $-1$ (we stress that $p_{\pm 1}$ denotes one number).  Trivially, the double root probability is at least 
$p_{\pm 1}$. A short consideration shows (see Appendix \ref{appendix:1})

\begin{fact}\label{fact:arithmetic} 
Assume that $\xi_j$'s have the Bernoulli distribution. Then $p_{\pm 1} = \Theta (n^{-2} )$ if $4|(n+1)$ and $0$ otherwise.
\end{fact}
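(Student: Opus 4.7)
The plan is to decompose $p_{\pm 1}$ by location (at $+1$ or $-1$), extract a parity obstruction mod $2$ giving the dichotomy, and then invoke a two-dimensional anti-concentration estimate for the rate. A double root at $1$ is the event $\{S=0\}\cap\{T=0\}$ where $S := \sum_{k=0}^n \xi_k$ and $T := \sum_{k=1}^n k\xi_k$; via the substitution $\eta_k := (-1)^k \xi_k$ (still Bernoulli), the event at $-1$ has the same joint distribution, so a union bound handles the upper estimate and the lower bound at $+1$ alone suffices for the matching lower bound.

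For the dichotomy, I would reduce modulo $2$. Since each $\xi_k \in \{\pm 1\}$ is odd, $S \equiv n+1 \pmod 2$ and $T \equiv \sum_{k=1}^n k = \tfrac{n(n+1)}{2} \pmod 2$. Both can vanish mod $2$ only when $n+1$ is even and $\tfrac{n+1}{2}$ is even, equivalently $4 \mid n+1$; otherwise the event is impossible and $p_{\pm 1} = 0$.

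For the rate under $4 \mid n+1$, I would compute the covariance of $(S,T)$: $\Var S = n+1$, $\Var T = \tfrac{n(n+1)(2n+1)}{6}$, covariance $\tfrac{n(n+1)}{2}$, so $\det \Sigma_n = \tfrac{n(n+1)^2(n+2)}{12} = \Theta(n^4)$. A two-dimensional local CLT predicts $\Pr(S=0, T=0) \asymp (\det \Sigma_n)^{-1/2} = \Theta(n^{-2})$. A more elementary conditional route is also available: $\Pr(S=0) \asymp n^{-1/2}$ by the 1D local CLT, and conditionally on $S=0$ the sign pattern is uniform over subsets $A \subset \{0,\ldots,n\}$ of size $\tfrac{n+1}{2}$, so $T = 0$ reduces to the subset-sum condition $\sum_{k \in A} k = \tfrac{n(n+1)}{4}$; a local CLT for this restricted subset sum (variance $\Theta(n^3)$) gives probability $\asymp n^{-3/2}$, multiplying to $\asymp n^{-2}$.

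The main obstacle is making either local CLT quantitative and uniform in $n$. The cleanest execution is Fourier inversion on $\Z^2$:
$$ \Pr(S=0, T=0) = \frac{1}{(2\pi)^2} \int_{-\pi}^{\pi}\!\!\int_{-\pi}^{\pi} \prod_{k=0}^n \cos(\theta + k\phi)\, d\theta\, d\phi. $$
The technical difficulty is bounding the integrand outside a small box around $(0,0)$ and its translates by points of $\pi\Z^2 \cap (-\pi,\pi]^2$. A Weyl-equidistribution estimate shows that $\prod_{k=0}^n |\cos(\theta + k\phi)|$ decays super-polynomially away from these lattice translates, while inside the small box a Taylor expansion $\log \cos u = -u^2/2 + O(u^4)$ produces a Gaussian integral with covariance $\Sigma_n$ contributing $\Theta(n^{-2})$. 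Contributions from the other lattice translates either reinforce (if $4 \mid n+1$) or cancel (otherwise), giving a second derivation of the dichotomy. Combining, $p_{\pm 1} = \Theta(n^{-2})$ precisely when $4 \mid n+1$.
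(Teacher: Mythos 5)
Your proposal is correct in outline, and it overlaps with the paper on two of the three components while diverging on the third. The parity dichotomy is the same idea as the paper's Fact \ref{fact:4k+1}: your direct reduction $S\equiv n+1$, $T\equiv n(n+1)/2 \pmod 2$ is in fact a slightly cleaner packaging of the paper's case analysis (odd value at $\pm1$ when $n$ is even, and a subtraction-plus-parity contradiction when $n=4k+1$), and your symmetry $\eta_k=(-1)^k\xi_k$ reducing the point $-1$ to the point $+1$ is also how one should handle $p_{\pm1}$. For the lower bound, your Fourier inversion on $\Z^2$ with characteristic function $\prod_{k=0}^n\cos(\theta+k\phi)$ is essentially the paper's circle method over $\Z/p\Z$ (Lemma \ref{lemma:tight}): the paper likewise isolates the four boxes near $(\theta,\phi)\in\{0,\pi\}^2$, notes that by the $4\mid(n+1)$ symmetry their contributions agree, and does the Gaussian computation on one box (Lemma \ref{lemma:S1}), leaving the analogue of your ``super-polynomial decay off the four boxes'' as the exercise Claim \ref{claim:integral}; so the one step you assert without proof (the Weyl/equidistribution-type bound $\sum_k\|(\theta+k\phi)/\pi\|^2\gg\log n$ off the boxes, fed into $|\cos u|\le\exp(-c\|u/\pi\|^2)$) is elementary but is exactly where the residual work lies, and you would need it in full strength since your route also extracts the \emph{upper} bound from the same integral. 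That is the genuine difference: the paper does not prove the upper bound $O(n^{-2})$ by Fourier at all, but by the inverse Littlewood--Offord theorem (Claim \ref{claim:anticoncentration}, via \cite{NgV-advances}), which has the advantage of working for any atom variable with unit variance and bounded $(2+\ep)$-moment (it is reused for Claim \ref{l.endpoint}), whereas your single local-CLT/Fourier argument is more self-contained and yields matched constants, but is tied to the explicit Bernoulli characteristic function and to carrying out the minor-arc estimate rigorously, including the sign bookkeeping at the three nontrivial lattice points that produces the reinforcement when $4\mid(n+1)$ and cancellation otherwise.
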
 
 

We are now ready to state our first theorem, which asserts that in the $4|(n+1)$ case, the double root probability is dominating by $p_{\pm 1}$, while in the other cases, this probability is very small. 
 
 \begin{theorem} [Double roots of Bernoully polynomials] \label{theorem:dbroot:Ber} Let $P_n(x)$ be a Bernoulli polynomial. Then 
 $$\P (P_n \,\, \hbox {\rm has real double roots}) = p_{\pm 1} +  n^{-\omega (1)} . $$ 
 \end{theorem}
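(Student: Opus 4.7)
The direction $\P(\cdot) \ge p_{\pm 1}$ is immediate, since a double root at $1$ or at $-1$ is a sub-event. The substance of the theorem is the matching upper bound
\begin{equation*}
\P\bigl(P_n \text{ has a real double root in } \R \setminus \{\pm 1\}\bigr) = n^{-\omega(1)}.
\end{equation*}
The triangle inequality and $|\xi_i|=1$ first force every real root of $P_n$ into $J := [-2,-1/2]\cup[1/2,2]$: outside $J$ the leading or constant term strictly dominates $|P_n|$. Fix a large constant $A$ and split $J\setminus\{\pm 1\}$ into a bulk $J_{\mathrm{bulk}} := J \setminus (B_{n^{-A}}(1)\cup B_{n^{-A}}(-1))$ and the punctured annuli $B_{n^{-A}}(\pm 1)\setminus\{\pm 1\}$.

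\textbf{Bulk.} For $x_0 \in J_{\mathrm{bulk}}$ the coefficient vectors $(x_0^i)_{i=0}^n$ and $(ix_0^{i-1})_{i=0}^n$ are strongly non-arithmetic, and a two-dimensional Hal\'asz-type small-ball estimate for the joint distribution of $(P_n(x_0), P_n'(x_0))$ yields $\P(|P_n(x_0)|\le\delta_1,\, |P_n'(x_0)|\le\delta_2)\le n^{-\omega(1)}$ for any $\delta_1,\delta_2$ of polynomial size in $n^{-1}$; when $|x_0|$ is bounded away from $1$ the denominator in the Hal\'asz bound is in fact exponentially large, so $\delta_1,\delta_2$ can even be taken exponentially small. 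Combined with the tail bound $\max_J |P_n^{(k)}| \le 2^{O(n)}$ (valid off an $n^{-\omega(1)}$ event), a Taylor-expansion/net argument with a sufficiently fine net $\eta$ transfers this single-point estimate to any putative double root in $J_{\mathrm{bulk}}$: the nearest net point $x$ must satisfy $|P_n(x)|\le \tfrac12 \eta^2\cdot 2^{O(n)}$ and $|P_n'(x)|\le \eta\cdot 2^{O(n)}$. Summing over the $\eta^{-1}$ net points leaves an $n^{-\omega(1)}$ bulk contribution.

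\textbf{Near-resonant.} In $B_{n^{-A}}(1)\setminus\{1\}$ write $x_0 = 1+u$ with $0<|u|\le n^{-A}$ and Taylor-expand
\begin{equation*}
P_n(x_0) = S_0 + uS_1 + \tfrac{u^2}{2}S_2 + \cdots, \qquad P_n'(x_0) = S_1 + u S_2 + \cdots,
\end{equation*}
where $S_k := P_n^{(k)}(1)/k! = \sum_i \binom{i}{k}\xi_i$ is an \emph{integer-valued} linear form in the $\xi_i$. A double root at $x_0\ne 1$ forces $S_0 + uS_1 \approx 0$ and $S_1 + uS_2 \approx 0$, and eliminating $u$ yields the algebraic near-relation $S_0 S_2 \approx S_1^2$ together with $(S_0,S_1)\ne (0,0)$ both small. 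Sharp one- and two-dimensional Erd\H os--Littlewood--Offord bounds for the integer-valued $S_k$'s show that this \emph{non-trivial} configuration has probability $n^{-\omega(1)}$, whereas the \emph{trivial} configuration $S_0 = S_1 = 0$ is exactly the event of a double root \emph{at} $x_0 = 1$ and contributes the $p_{\pm 1}$ term. A symmetric analysis handles $-1$, completing the proof. The main obstacle is precisely this near-resonant regime: as $x_0\to 1$ the vectors $(x_0^i)$ and $(ix_0^{i-1})$ become nearly parallel to the resonant vectors $(1)_i$ and $(i)_i$ and the bulk anti-concentration degenerates, so extracting $n^{-\omega(1)}$ (rather than a polynomial rate) requires crucially exploiting the arithmetic/integrality structure of the $S_k$'s to separate the exact-resonance event $S_0 = S_1 = 0$ from mere near-misses $u\neq 0$.
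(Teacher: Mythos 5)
Your lower bound, the confinement of roots to $[1/2,2]$ in absolute value, and the overall split into a bulk and a resonant neighborhood of $\pm 1$ are all in the spirit of the paper, and your near-resonant idea (integer-valued forms $S_k=\sum_i\binom{i}{k}\xi_i$, with $S_0=S_1=0$ being exactly the double-root-at-$1$ event) is essentially the paper's edge argument (Theorem \ref{theorem:close:Ber}). The genuine gap is the bulk step. The claimed two-dimensional Hal\'asz-type bound --- $\P(|P_n(x_0)|\le\delta_1,\,|P_n'(x_0)|\le\delta_2)\le n^{-\omega(1)}$ uniformly for all $x_0$ at distance $\ge n^{-A}$ from $\pm1$ and all polynomially small $\delta_1,\delta_2$ --- is not a citable off-the-shelf estimate; it is precisely the technical heart of the problem, and as stated it is false near $\pm 1$. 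Indeed, when $4\mid(n+1)$, on the event $P_n(1)=P_n'(1)=0$ (probability $\Theta(n^{-2})$ by Fact \ref{fact:arithmetic}) intersected with the typical event $|S_k|\le n^{k+1/2+o(1)}$ for all $k$, a bulk point $x_0=1-u$ with $u$ just above your cutoff $n^{-A}$ has $|P_n(x_0)|\le n^{5/2+o(1)}u^{2}$ and $|P_n'(x_0)|\le n^{5/2+o(1)}u$, so the joint small-ball probability at the polynomial scales $\delta_1\approx n^{-2A+3}$, $\delta_2\approx n^{-A+3}$ is at least of order $n^{-2}$. In other words, with a cut at radius $n^{-A}$ the genuinely resonant annulus $n^{-A}\le|x\mp1|\lesssim n^{-2}$ lands in your bulk, where ``strong non-arithmeticity'' fails; the paper cuts at $1-n^{-2+\ep}$ for exactly this reason, and even inside its bulk it proves only the one-dimensional bound $\sup_{x}\P(|P_n(x)|\le\delta^2)=O(\delta^{1+c})$ at polynomial scales (Lemma \ref{l.almost-zero}), whose proof in the range $1-\log^2 n/n<x\le 1-n^{-2+\ep}$ requires the inverse Littlewood--Offord/GAP machinery. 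Note also that a pointwise $n^{-\omega(1)}$ bound at a \emph{fixed} polynomial scale is not even the right statement away from $1$ (for $x_0=2^{-1/m}$ the small-ball probability at scale $\delta$ is of order $\delta$); the $n^{-\omega(1)}$ in the theorem comes from the ``for every $C$ there is $B$'' structure together with the fact that an exact double root meets the $n^{-B}$ threshold for every $B$, not from superpolynomial pointwise anticoncentration.

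Two further points. First, your net argument runs over all of $[1/2,2]$ with derivative bounds $2^{O(n)}$, which forces an exponentially fine net and hence would require exponentially small pointwise probabilities uniformly over the bulk --- far stronger than anything you (or the paper) establish; the paper avoids this by the $x\mapsto 1/x$ reduction to $|x|\le 1$, where derivatives are polynomially bounded and the $O(\delta^{1+c})$ gain beats the $O(\delta^{-1})$ net points. Second, in the near-resonant region the elimination to $S_0S_2\approx S_1^2$ and the appeal to ``sharp one- and two-dimensional ELO bounds'' is not a legitimate step as written (this is a quadratic Littlewood--Offord problem, and in any case events such as $S_1=0$ have only polynomially small probability); fortunately, with your cut at $n^{-A}$ it is unnecessary: the whp bounds $|S_k|\le n^{k+1/2+o(1)}$ plus integrality give $S_0=S_1=0$ directly, which is how the paper handles its easy subcase $t\le n^{-8}$. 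So your edge region is fine precisely because the cut is so small --- but that is what pushes the real difficulty into a bulk region where your claimed estimate is unavailable, and this is where the proposal has a genuine hole.
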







Theorem \ref{theorem:dbroot:Ber} is a special case of our main result below, which deals with near double roots of general random polynomials.

\begin{theorem} [Main result: Roots repulsion] \label{theorem:repulsion:uniform} Assume that $\xi_0,\dots, \xi_n$ are independent and all of them are either Type I with the same fixed parameter $N$, or  Type II with uniform implicit constants (which are fixed).  Then for any constant $C>0$ 
 there exists a constant $B>0 $ such that for all sufficiently large $n$
$$\P \Big(\exists x\in  \mathbb R: P_n(x) = 0 \wedge |P_n'(x)|   \le n^{-B} \Big) \le  p_{\pm 1} + n^{-C} $$
where  $p_{\pm 1}$ denotes the probability that the polynomial has a double root at either $1$ or $-1$.
\end{theorem}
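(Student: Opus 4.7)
The plan is to cover the real line and control the near-double-root event region by region via joint anti-concentration of the pair $(P_n(x), P_n'(x))$, isolating the irreducible contribution $p_{\pm 1}$ coming from $x = \pm 1$. First I would reduce to $|x| \le 1$ via the reverse-polynomial transform: $Q_n(y) := y^n P_n(1/y) = \sum_i \xi_{n-i} y^i$ has the same joint coefficient distribution as $P_n$ (since the $\xi_i$ are iid), and when $P_n(x) = 0$ one computes $Q_n'(1/x) = -x^{2-n} P_n'(x)$, so a near-double root with $|x| \ge 1$ transfers to one with $|y| \le 1$ and derivative at most $|P_n'(x)|$. Inside $[-1,1]$ I would separate the bulk $\{|x| \le 1 - n^{-\alpha}\}$ from a transition band near $\pm 1$.

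In the bulk I would apply a net-and-union bound. On a grid of spacing $n^{-M}$ (so $n^{O(M)}$ points), the pair $(P_n(x_0), P_n'(x_0)) = \sum_i \xi_i (x_0^i, i x_0^{i-1})$ is a 2D linear image of $\xi$ whose $2 \times 2$ Gram matrix has determinant bounded below by $n^{-O(\alpha)}$ when $|x_0| \le 1 - n^{-\alpha}$. A 2D Erd\H{o}s--Littlewood--Offord--Hal\'asz inequality --- via Esseen/characteristic-function bounds for Type~I and via density/convolution for Type~II --- then gives $\Pr(|P_n(x_0)|, |P_n'(x_0)| \le n^{-M_1}) \le n^{-M_2}$ with $M_2$ arbitrarily large once $M_1$ is chosen large enough. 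Combined with deterministic Lipschitz bounds on $P_n$ and $P_n'$ valid with probability $1 - n^{-C}$ (boundedness in Type~I, $(2+\ep_0)$-moment plus Markov in Type~II), this rules out near-double roots in the bulk up to probability $n^{-C}$.

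The transition band near $\sigma \in \{\pm 1\}$ is the crux. I would expand $P_n(\sigma + s) = \sum_{k \ge 0} s^k T_k$ with $T_k := P_n^{(k)}(\sigma)/k!$ and reformulate the near-double-root condition at $x = \sigma + s$ as the system $\sum_k s^k T_k = 0$ together with $\big|\sum_k k s^{k-1} T_k\big| \le n^{-B}$. At $s = 0$ this forces $T_0 = P_n(\sigma)$ and $T_1 = P_n'(\sigma)$ to be (essentially) zero, and by Fact~\ref{fact:arithmetic} together with its Type~II analogue (noting $p_{\pm 1}=0$ for Type~II since $\xi$ has a density) this contributes exactly $p_{\pm 1}$ up to lower order. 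For $s \ne 0$ one conditions on $(T_0, T_1)$ lying in the small box required at $s=0$ and uses the remaining randomness in $T_2, T_3, \ldots$ to show, via a refined 2D anti-concentration combined with a net over $s$, that the probability of a further simultaneous cancellation is at most $n^{-C}$ without double-counting the $s=0$ event.

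The main obstacle is this transition band. At $x$ near $\pm 1$ the coefficient vectors of $P_n$ and $P_n'$ are highly structured --- nearly constant, resp.~linearly growing --- so the polynomial-scale joint anti-concentration we need must be extracted from the fine arithmetic/density structure of $\xi$ (inverse-Littlewood--Offord or Esseen in Type~I, density/convolution in Type~II) and carefully decoupled from the $s=0$ lattice obstruction, so that the final estimate is sharp at $p_{\pm 1}$ rather than inflated by a spurious $\Theta(p_{\pm 1})$ factor from the interval of admissible $s$.
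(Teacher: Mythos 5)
Your outline has the right global architecture (invert to $|x|\le 1$, split bulk from a band near $\pm 1$, isolate $p_{\pm 1}$ at the edge), but the step you treat as routine in the bulk is in fact the main technical content of the theorem, and as stated it fails. You assert that a two-dimensional Erd\H{o}s--Littlewood--Offord/Hal\'asz or Esseen bound, using only a lower bound $n^{-O(\alpha)}$ on the Gram determinant of the coefficient vectors $(x_0^i, i x_0^{i-1})$, yields $\Pr\bigl(|P_n(x_0)|\le n^{-M_1},\ |P_n'(x_0)|\le n^{-M_1}\bigr)\le n^{-M_2}$ with $M_2$ arbitrarily large once $M_1$ is large. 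For discrete (Type I) atoms this is not a consequence of any standard small-ball inequality: Hal\'asz-type bounds give exponents like $n^{-O(1)}$ at scales comparable to the coefficient vectors, and shrinking the scale to $n^{-M_1}$ does not automatically shrink the probability, because when $x_0$ is polynomially close to $1$ the $2^{n+1}$ achievable values of $P_n(x_0)$ can be packed exponentially densely, so one must \emph{rule out arithmetic structure} of the coefficient sequence at very fine scales. This is exactly what the paper's Lemma~\ref{l.almost-zero} does, and its proof in the regime $1-\log^2 n/n< x\le 1-n^{-2+\ep}$ requires the full inverse Littlewood--Offord theorem (Theorem~\ref{theorem:inverse}) together with the finite-difference argument on the GAP coordinates (Lemma~\ref{lemma:crucial:1/2}, Claim~\ref{claim:increasing}); away from $1$ it uses a separate lacunarity argument. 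Note also that the paper only needs a one-dimensional small-ball bound for $P_n$ at the centers of a $\delta$-net (with the crucial gain $O(\delta^{1+c})$ at scale $\delta^2$, so that the union over $\delta^{-1}$ intervals closes); your two-dimensional formulation is not wrong, but it makes the unproved anti-concentration claim strictly harder, and nothing in your sketch supplies it.

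The transition band is also left essentially open. Conditioning on $(T_0,T_1)=(P_n(\sigma),P_n'(\sigma))$ lying in a small box and then ``using the remaining randomness in $T_2,T_3,\dots$'' is not a decoupling: all the $T_k$ are linear forms in the same $\xi_i$, so there is no remaining independent randomness to exploit, and ``refined 2D anti-concentration plus a net over $s$'' again presupposes the very estimate that is missing. The mechanism the paper actually uses at the edge (Theorem~\ref{theorem:close:Ber}) is different and specific to Type I: expanding in $t=1-x$ on the very narrow band $|1-x|\le n^{-2+\ep}$ and using that $\sum_i\xi_i$ and $\sum_i i\xi_i$ are \emph{integers}, a near-double root forces both to vanish exactly, i.e.\ forces an exact double root at $\pm 1$; this is what makes the bound sharp at $p_{\pm 1}$ with no extra factor from the range of $s$. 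Your sketch names this sharpness issue but does not resolve it. (The Type II half of your plan is fine in spirit --- there density/convolution bounds as in Fact~\ref{f.almost-zero} do give arbitrarily large polynomial exponents and $p_{\pm 1}=0$ --- but for Type I the proposal, as written, has a gap at its core.)
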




It is clear that in the type II setting we have $p_{\pm 1}=0$. Also, it is not hard to show that $p_{\pm 1}=O(1/ n^2)$ for the type $I$ setting (see Claim~\ref{claim:anticoncentration}.)  

Our proof  will provide  more  detailed  information about the location of double and near double roots.  For instance,  in the discrete case, we  
will show that with overwhelming probability, any  double  (or near double) root, has to be very close to either 1 or $-1$.  
We present the precise statements in  Section \ref{section:prelim} and 
with proofs  in Sections  \ref{section:repulsion:general}-\ref{section:repulsion:edge}.  As an application, we verify Theorem \ref{theorem:expectedvalue:2} in Section \ref{section:expectedvalue:2}. 
A few technical statements will be proved in the Appendices in order to maintain the flow of the presentation. 

\begin{remark} Parallel to this paper,  Peled et. al. \cite{PSZ} proved for random polynomials with the atom variable $\xi$ having support on $\{0, \pm 1\}$ that the probability of having double roots (both real and complex) is dominated by the probability of having double roots at $0, \pm 1$. The method used in \cite{PSZ} is specialized for the support consisting of $\{0, \pm 1\}$ and is totally different from the method we used in this paper. 
\end{remark}


\section{More precise statements }\label{section:prelim}

The harder case in our study is the discrete one ($\xi$ is of Type I). In  what follows we first discuss our approach for this case.

\subsection{$\xi$ of type I}  Note that the real roots of $P_n(x)$ have absolute value bounded from above by $N+1$ and below by $1/(N+1)$. It follows that we only need to consider $1/(N+1)<|x|<N+1$. Since $P_n(-x)$ and  $x^n P_n(1/x)$ have the same distribution as $P_n(x)$, it suffices to consider $1/(N+1)<x\le 1$.

 Let $0<\ep<2$ be a constant and set
\begin{equation}\label{eqn:I0I1}
I_0 := (1/(N+1), 1-n^{-2+\ep}],  \mbox{ and } I_1:= (1-n^{-2+\ep}, 1].
\end{equation}



Theorem~\ref{theorem:repulsion:uniform} is a consequence of the following two results, where $\xi$ is assumed to have type $I$. 

\begin{theorem}[No double root in the bulk]\label{theorem:prelim:1}
Let   $C>0$ be any constant. Then there exists $B>0$ depending on $C$ and $\ep$ such that the following holds  
$$\P\Big( \exists x \in I_0: |P_n(x)|\le n^{-B}, |P_n'(x)| \le n^{-B} \Big) =O(n^{-C}).$$
\end{theorem}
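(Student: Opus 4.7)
The plan is a standard two-step argument: a net reduction followed by a pointwise $2$-dimensional anti-concentration estimate for the vector $(P_n(x_0), P_n'(x_0))$ at each net point $x_0 \in I_0$.

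First I would set up the net. Since $|\xi_i|\le N$, the crude deterministic estimate $|P_n^{(k)}(x)|\le N(n+1)^{k+1}$ holds for $|x|\le 1$, so if $|P_n(x)|,|P_n'(x)|\le n^{-B}$ at some $x\in I_0$, then at the closest point $x_0$ of a uniform net $\CN\subset I_0$ of spacing $n^{-B-10}$ we still have $|P_n(x_0)|,|P_n'(x_0)|\le 2n^{-B}$. Since $|\CN|\le n^{B+10}$, a union bound reduces matters to showing, for each fixed $x_0\in I_0$,
\begin{equation*}
\Pr\bigl(|P_n(x_0)|\le 2n^{-B},\; |P_n'(x_0)|\le 2n^{-B}\bigr)=O\bigl(n^{-(C+B+11)}\bigr),
\end{equation*}
provided $B=B(C,\epsilon)$ is chosen large enough.

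Second, I would establish this pointwise bound via a 2D anti-concentration argument. Writing $(P_n(x_0),P_n'(x_0))=\sum_{i=0}^n \xi_i\vec{v}_i$ with $\vec{v}_i:=(x_0^i,\,i x_0^{i-1})$, the family $\{\vec{v}_i\}$ for $x_0\in I_0$ is genuinely two-dimensional: the slopes $i/x_0$ are distinct and form an arithmetic progression, while the norms decay geometrically with ratio $x_0$. I would apply a 2D Esseen-type small-ball inequality via the joint characteristic function
\begin{equation*}
|\phi_{(P_n(x_0),P_n'(x_0))}(s,t)|=\prod_{i=0}^n|\phi_\xi(sx_0^i+t\,ix_0^{i-1})|.
\end{equation*}
For type I one has $|\phi_\xi(u)|\le 1-c\min(1,\mathrm{dist}(u,2\pi\BBZ)^2)$, while for type II one has $|\phi_\xi(u)|\le 1-c\min(1,u^2)$, so controlling the product over $i$ and integrating against a 2D approximate identity yields small-ball of the form $n^{-2B}$ times a gain coming from the 2D spread of the $\vec{v}_i$.

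Third, to convert the 2D spread into the quantitative bound I would split $I_0=I_0^{\mathrm{small}}\cup I_0^{\mathrm{large}}$, with $I_0^{\mathrm{small}}:=(1/(N+1),1/2]$ and $I_0^{\mathrm{large}}:=(1/2,1-n^{-2+\epsilon}]$. On $I_0^{\mathrm{small}}$ only $O(\log n)$ terms are significant, but they form a geometric sequence free of nontrivial additive structure, so a Hal\'asz-type inequality yields super-polynomial small-ball decay already on the $1$D marginal $P_n(x_0)$. On $I_0^{\mathrm{large}}$ the number of coefficients $x_0^i$ of order comparable to $1$ is $\sim 1/(1-x_0)\ge n^{\epsilon}$, and both the $1$D anti-concentration of $P_n(x_0)$ and the angular separation of the $\vec{v}_i$ grow polynomially in $n^{\epsilon}$; raising this gain to a sufficient power by exploiting the independence of many $\xi_i$ gives the required $n^{-(C+B+11)}$ bound. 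The main obstacle I anticipate is handling $x_0$ close to the endpoint $1-n^{-2+\epsilon}$, where the 2D spread is only $n^{O(\epsilon)}$: one must carefully combine Hal\'asz's inequality with the independence of many $\xi_i$ to inflate this to an arbitrary $n^{-C}$ by choosing $B$ large, and the margin $\epsilon>0$ in the definition of $I_0$ is precisely what makes this tradeoff possible.
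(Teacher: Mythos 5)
There is a genuine gap, and it sits exactly at the step you flag as routine: the pointwise bound $\P\big(|P_n(x_0)|\le 2n^{-B},\,|P_n'(x_0)|\le 2n^{-B}\big)=O(n^{-(C+B+11)})$ that your fine net (spacing $n^{-B-10}$, hence $\approx n^{B+10}$ points) forces you to prove. For $x_0$ bounded away from $1$ this bound is out of reach of any Esseen/Hal\'asz computation of the kind you describe, and is possibly false at exceptional points. Indeed, at scale $n^{-B}$ only the first $M=O_{x_0}(B\log n)$ coefficients are effective (the tail of $P_n$ and $P_n'$ is deterministically below $n^{-B-1}$), so after conditioning on the remaining coefficients the best conceivable anti-concentration bound is the maximal atom $(2N)^{-M}= n^{-(B+O(1))\log(2N)/\log(1/x_0)}$; for Bernoulli coefficients and $x_0$ slightly above $1/2$ this is about $n^{-(B+2)}$, far larger than the required $n^{-(C+B+11)}$. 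Beating it would require extracting cancellation from the tail coefficients at scales exponentially smaller than their own magnitudes (fine-scale equidistribution of Bernoulli-convolution type), which no characteristic-function product estimate of the sort you sketch provides. The specific claims in your third step are also incorrect: with $O(\log n)$ effective summands, ``super-polynomial small-ball decay'' on $I_0^{\mathrm{small}}$ is impossible (the decay is $n^{-O(B)}$ at best); and on $(1/2,\,1-n^{-2+\ep}]$ the quantity $1/(1-x_0)$ is only bounded \emph{above} by $n^{2-\ep}$ --- it can equal $2$ --- so there is no $n^{\ep}$ spread of comparable coefficients for $x_0$ bounded away from $1$. Finally, even in the genuine near-edge regime $1-x_0=o(1)$, Hal\'asz-type inequalities give a gain of a \emph{fixed} power of $n$, whereas you need an arbitrary power $n^{-C-11}$; upgrading a fixed gain to an arbitrary one requires ruling out additive structure in the sequence $(x_0^i)$ at scale $n^{-B}$, which is a structure theorem, not a direct small-ball estimate.

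The paper's proof avoids ever needing a joint two-dimensional bound, and this is the idea your proposal is missing. It uses a \emph{coarse} net of spacing $\delta=n^{-A}$ and observes that if $|P_n(x)|,|P_n'(x)|\le n^{-B}$ for some $x$ in a subinterval, then by two applications of the mean value theorem (and the trivial bound $|P_n''|=O(n^3)$) the value at the subinterval's center satisfies $|P_n(x_I)|=O(\delta^2 n^3+n^{-B})$: the smallness of the derivative is converted \emph{deterministically} into quadratic smallness of $P_n$ relative to the net spacing. Consequently a one-dimensional small-ball bound $\sup_{x\in I_0}\P(|P_n(x)|\le \delta^2)=O(\delta^{1+c})$ with an arbitrarily small fixed $c>0$ (Lemma \ref{l.almost-zero}) already beats the union bound over the $O(\delta^{-1})$ subintervals, and one concludes by taking $A\ge C/c$. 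That $1$D lemma is then proved by a lacunary separation argument for $x\le 1-\log^2 n/n$ and, near $1$, by the inverse Littlewood--Offord theorem together with the GAP/finite-difference argument of Lemma \ref{lemma:crucial:1/2}; this is precisely the machinery that replaces your hoped-for ``angular separation'' gain. If you want to salvage your plan, replace the fine net and $2$D anti-concentration by this quadratic-gain reduction, and be prepared to invest the real work in the $1$D estimate near the edge.
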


\begin{theorem}[No double root at the edge]\label{theorem:close:Ber}  For sufficiently small $\ep$, there exists $B>0$ such that 
$$\P\Big( \exists x \in I_1: |P_n(x)|\le n^{-B}, |P_n'(x)| \le n^{-B} \Big)  =  \P\Big( P_n(1)=P_n'(1)=0 \Big).$$
\end{theorem}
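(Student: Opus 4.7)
The plan is to prove the stated equality by establishing a deterministic containment of events: any realization of $(\xi_0, \ldots, \xi_n)$ for which some $x_0 \in I_1$ satisfies $|P_n(x_0)|, |P_n'(x_0)| \le n^{-B}$ must also satisfy $P_n(1) = P_n'(1) = 0$. The reverse containment is trivial because $1 \in I_1$. Throughout, set $u_0 := x_0 - 1 \in [-n^{-2+\ep}, 0]$ and expand around $1$:
\[
P_n(y) = \sum_k a_k (y-1)^k, \qquad a_k = \sum_{i=k}^n \binom{i}{k}\xi_i \in \Z, \qquad |a_k| \le N \binom{n+1}{k+1}.
\]
The goal is $a_0 = a_1 = 0$, which uses crucially that each $a_k$ is an integer.

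To prove $a_0 = 0$, start from the algebraic identity
\[
P_n(x_0) - u_0 P_n'(x_0) = a_0 - \sum_{k \ge 2}(k-1) a_k u_0^k.
\]
Bound the left-hand side by $2n^{-B}$ and the tail term-wise as $|a_k u_0^k| \le \frac{N}{(k+1)!} n^{1-(1-\ep)k}$; for $\ep < 1/2$ the sum over $k \ge 2$ is $O(Nn^{-1+2\ep}) = o(1)$. This yields $|a_0| = o(1)$, and integrality forces $a_0 = P_n(1) = 0$.

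To prove $a_1 = 0$, factor $P_n(y) = (y-1) R(y)$, so $R(1) = a_1$ and the hypotheses give $|R(x_0)| \le n^{-B}/|u_0|$, $|R'(x_0)| \le 2n^{-B}/u_0^2$. I would split on the size of $|u_0|$. When $|u_0| \le n^{-4}$, continuity using $\max_{[x_0,1]}|R'| = O(Nn^3)$ gives $|R(1) - R(x_0)| = o(1)$, and $|R(x_0)| \le |P_n'(x_0)| + |u_0| O(Nn^3) = o(1)$ together force $R(1) = 0$. For $|u_0| > n^{-4}$, naively re-applying the first-step identity to $R$ yields only $|R(1)| = O(Nn^{2\ep})$, which is not $o(1)$ and so is insufficient for integrality. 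The plan is to leverage the already-proved $P_n(1) = 0$: Taylor-expanding $P_n$ to second order around $x_0$ and substituting $P_n(1) = 0$ gives the key bound $|u_0^2 P_n''(x_0)| = o(1)$ for $\ep < 2/3$. Iterating the factorisation identity $u_0 R^{(k)}(x_0) = P_n^{(k)}(x_0) - k R^{(k-1)}(x_0)$ yields the closed form
\[
R(1) = \sum_{j=0}^{n-1} (n-j)\,\frac{(-1)^j}{j!}\, P_n^{(j)}(x_0)\, u_0^{j-1},
\]
in which the $j=0,1$ terms are controlled by the direct hypotheses, the $j=2,3$ terms by the derived bounds coming from $P_n(1) = 0$, and the $j \ge 4$ tail by the polynomial decay $|u_0^{j-1} P_n^{(j)}(x_0)/j!| = O(Nn^{1-(1-\ep)(j-1)})$. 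A careful accounting of these contributions, valid for $\ep$ sufficiently small and $B$ sufficiently large, produces $|R(1)| = o(1)$, whence $R(1) = 0$ by integrality.

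The main obstacle is the second step in the regime $|u_0| > n^{-4}$, where the naive Taylor argument on $R$ falls short of $o(1)$ by a factor of $n^{2\ep}$. Overcoming this requires combining three pieces of information in a precisely balanced way---the smallness of $P_n$ and $P_n'$ at $x_0$, the vanishing $P_n(1) = 0$, and the combinatorial identity expressing $R^{(k)}$ via $P_n^{(j)}$---and tracking them uniformly over the admissible range of $|u_0|$; this balance is what ultimately dictates the "$\ep$ sufficiently small" hypothesis.
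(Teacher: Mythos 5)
Your first step is correct and is in fact a nice deterministic improvement over the paper: the combination $P_n(x_0)-u_0P_n'(x_0)=a_0-\sum_{k\ge2}(k-1)a_ku_0^k$ cancels the dangerous $a_1u_0$ term, the remaining tail is deterministically $O(Nn^{-1+2\ep})=o(1)$, and integrality gives $P_n(1)=0$ (the paper needs a high-probability event already at this stage). Your easy case $|u_0|\le n^{-4}$ for $a_1$ is also fine. The gap is in the hard case, and it is genuine. All that $P_n(1)=0$ yields, via Taylor expansion around $x_0$ with a deterministic bound on the next derivative, are estimates of the shape $|u_0^{\,j}P_n^{(j)}(x_0)|\le O(n^{-B})+O(Nn^{4}|u_0|^3)=o(1)$ for $j=2,3,\dots$; but every formula for $a_1=R(1)$ --- yours, or the plain expansion $P_n'(1)=\sum_{j\ge1}\frac{(-u_0)^{j-1}}{(j-1)!}P_n^{(j)}(x_0)$ --- involves $u_0^{\,j-1}P_n^{(j)}(x_0)$, one power of $u_0$ short, and dividing the derived $o(1)$ bound by $|u_0|$ (which is at most $n^{-2+\ep}$, and may be as small as $n^{-4}$ in your regime) destroys it. Concretely, after eliminating $a_2$ optimally (take $2P_n(x_0)/u_0-P_n'(x_0)$) one is left with $a_1=a_3u_0^2+O(n^{-B+4})+o(1)$, and deterministically $|a_3|=|\sum_i\binom{i}{3}\xi_i|$ can be of order $n^4$, so $|a_3u_0^2|$ can be of order $n^{2\ep}$: the hypotheses together with $P_n(1)=0$ simply do not force $|a_1|<1$. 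Your own formula makes matters worse, not better: the factor $(n-j)$ adds a power of $n$, so even its $j=4$ term is only bounded by $O(Nn^{3\ep})$ deterministically, contradicting the claimed summable decay of the tail.

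This is exactly why the paper's proof of Theorem~\ref{theorem:close:Ber} is probabilistic rather than deterministic. It conditions on the event $\CE$ that $|\sum_i\binom{i}{k}\xi_i|\le n^{k+1/2}\log^{O(1)}n$ for all $k$, which holds with probability $1-\exp(-\Omega(\log^2 n))$; on $\CE$ the troublesome contribution $a_3u_0^2$ is $O(n^{-1/2+2\ep}\log^{O(1)}n)=o(1)$ for small $\ep$, while the $a_2u_0$ term is removed by combining the smallness of $P_n(x_0)$ and $P_n'(x_0)$ (the paper's ``eliminate the $k=2$ term'' step, the same algebraic idea as your step 1). So the missing ingredient is this probabilistic control of the Taylor coefficients at $1$: typical coefficients are smaller than worst-case ones by a factor $\sqrt n$, and that factor is precisely what closes the $n^{2\ep}$ shortfall you identified. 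Correspondingly, the containment of events you aim for should only be asserted off an exceptional set of probability $n^{-\omega(1)}$ --- which is all the paper's own argument delivers, and all that is used in Theorem~\ref{theorem:dbroot:Ber} --- rather than realization by realization; a fully deterministic containment is not established by the proposed estimates and would require ruling out integer coefficient patterns such as $|a_3|\sim n^{4-2\ep}$, $a_2\approx-2a_3u_0$, which your bounds do not do.
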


In fact, it follows from the proof  in Section \ref{section:repulsion:edge} that one can take $\ep\le 1/8$ and $B=16$.
 



To prove Theorem \ref{theorem:expectedvalue:2}, we will need the following stronger version of Theorem~\ref{theorem:prelim:1}.

\begin{theorem}\label{theorem:repulsion:general} Assume that $\xi$ has uniform discrete distribution with parameter $N$, where $N$ is fixed. Let  $C>0$ be any constant. Then there exists $B>0$ depending on $C,N$ and $\ep$ such that the following holds with probability at least $1-O(n^{-C})$. 
\begin{enumerate}
\item[(i)] (Near double roots) There does not exist  $x \in I_0 $ such that  
$$|P_n(x)|\le n^{-B} \wedge |P_n'(x)| \le n^{-B}.$$

\item[(ii)] (Repulsion) There do not exist  $x,x'\in I_0$ with $|x-x'|\le n^{-B}$ such that  
$$P_n(x)=P_n(x')=0.$$

\item[(iii)] (Delocalization)  For any given $a \in I_0 $, there is no $x$ with $|P_n(x)|\le n^{-B}$ and $|x-a| \le n^{-B}$. 
\end{enumerate}
\end{theorem}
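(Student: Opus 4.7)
Plan for Theorem~\ref{theorem:repulsion:general}.

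All three parts will reduce to a single planar anti-concentration estimate together with the deterministic bound $|P_n^{(k)}(x)|\le N(n+1)^{k+1}\le n^{B_2}$ for $|x|\le 2$, $k=0,1,2$ (valid since $|\xi_i|\le N$ for Type I), where $B_2=B_2(N)$ is an explicit constant.

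First, (ii) reduces to (i): if $P_n(x)=P_n(x')=0$ with $x,x'\in I_0$ and $|x-x'|\le n^{-B}$, Rolle's theorem gives $\eta\in(x,x')\subset I_0$ with $P_n'(\eta)=0$, and the mean value theorem yields $|P_n(\eta)|\le n^{-B+B_2}$, so $\eta$ violates (i) at level $B-B_2$. For (iii), a fixed $a\in I_0$ together with any $x$ satisfying $|x-a|\le n^{-B}$ and $|P_n(x)|\le n^{-B}$ force $|P_n(a)|\le n^{-B+B_2+1}$ by continuity; combined with the deterministic $|P_n'(a)|\le n^{B_2}$, part (iii) follows from showing the single event $\{|P_n(a)|\le n^{-B+B_2+1},\ |P_n'(a)|\le n^{B_2}\}$ has probability $\le n^{-C}$. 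For (i), let $\mathcal N\subset I_0$ be the uniform net of spacing $\delta=n^{-B_1}$ with $B_1:=B+B_2+1$, so $|\mathcal N|\le n^{B_1+1}$; any $x\in I_0$ violating (i) lies within $\delta$ of some $x_0\in\mathcal N$ at which both $|P_n(x_0)|$ and $|P_n'(x_0)|$ are $\le 2n^{-B}$, and a union bound reduces (i) to
\[
\sup_{x_0\in I_0}\P\big(|P_n(x_0)|\le 2n^{-B},\ |P_n'(x_0)|\le 2n^{-B}\big)\le n^{-(C+B_1+1)}.
\]

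The remaining work is the uniform planar anti-concentration
\[
\sup_{u\in\R^2}\P\big(|P_n(x_0)-u_1|\le r_1,\ |P_n'(x_0)-u_2|\le r_2\big)\le \frac{C\,r_1 r_2}{\sqrt{D_2(x_0)}},\quad D_2(x_0):=\sum_{0\le i<j\le n}(j-i)^2 x_0^{2(i+j-1)},
\]
valid for all $r_1,r_2>0$ and $x_0\in I_0$. The plan is a standard Esseen-type Fourier calculation: writing $v_i:=(x_0^i,ix_0^{i-1})$, the characteristic function of $(P_n(x_0),P_n'(x_0))=\sum_i\xi_i v_i$ factors as $\prod_i\varphi_\xi(\lambda\cdot v_i)$, and the pointwise estimate $|\varphi_\xi(t)|\le\exp(-c_N\|t\|_{\R/2\pi\Z}^2)$ for uniform $\xi$ on $\{\pm 1,\dots,\pm N\}$ yields a Gaussian-type bound whose integral over $\lambda\in\R^2$ is controlled by $1/\sqrt{\det \sum_i v_iv_i^T}=1/\sqrt{D_2(x_0)}$ via the Cauchy--Binet identity $\det\sum_i v_iv_i^T=\sum_{i<j}(v_i\wedge v_j)^2$. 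A direct calculation gives $D_2(x_0)\asymp\min(n^4,(1-x_0)^{-4})\ge c>0$ uniformly on $I_0$, so with $r_1=r_2=n^{-B}$ one obtains a per-point bound of $O(n^{-2B})$; choosing $B\ge C+B_2+3$ closes the union bound. The same estimate, applied at the fixed $a$ with $r_1=n^{-B+B_2+1}$ and $r_2=n^{B_2}$, delivers (iii).

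The main obstacle is the planar anti-concentration itself in the \emph{discrete} setting. Standard black-box Esseen or Littlewood--Offord results typically require either an absolutely continuous density or the pure $\pm 1$ symmetry, so one must redo the Fourier step with care: truncate the $\lambda$-integral to a fundamental domain for the lattice generated by appropriate pairs $(v_i,v_j)$, and rule out large-$\lambda$ resonances where many $\lambda\cdot v_i$ are simultaneously close to $2\pi\Z$. The hypothesis $x_0\in I_0$, i.e.\ $1-x_0\ge n^{-2+\ep}$, is essential here precisely because it keeps $D_2(x_0)$ bounded below; in the edge regime $x_0\to 1$ the vectors $v_i=(x_0^i,ix_0^{i-1})$ become nearly collinear, $D_2(x_0)$ collapses, and the planar gain is lost, which is why the complementary interval $I_1$ is treated separately in Theorem~\ref{theorem:close:Ber}.
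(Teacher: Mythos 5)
There is a genuine gap: your entire argument rests on the planar anti-concentration bound $\sup_{u}\P\big(|P_n(x_0)-u_1|\le r_1,\ |P_n'(x_0)-u_2|\le r_2\big)\le C r_1r_2/\sqrt{D_2(x_0)}$, and this is neither proved nor, as stated, true in the discrete setting. For Type I (lattice) coefficients the joint law of $(P_n(x_0),P_n'(x_0))$ is purely atomic, so the left-hand side is at least $(2N)^{-(n+1)}$ for a suitably centered box no matter how small $r_1,r_2$ are; a bound of density type ``valid for all $r_1,r_2>0$'' therefore fails outright, and even at the polynomial scales $r_1=r_2=n^{-B}$ that you actually need, the Esseen computation you sketch does not close. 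The pointwise bound $|\varphi_\xi(t)|\le\exp(-c_N\|t\|^2)$ only penalizes frequencies $\lambda$ for which many of the phases $\lambda\cdot v_i$ avoid the dual lattice, and the regime $1-x_0\in[n^{-2+\ep},\log^2 n/n]$ is exactly where the numbers $x_0^i$ have strong arithmetic (near-GAP) structure, so large-$\lambda$ resonances are the heart of the matter, not a technicality. You flag this yourself as ``the main obstacle,'' but ruling out those resonances is precisely what the paper's heaviest machinery is for: the inverse Littlewood--Offord theorem (Theorem~\ref{theorem:inverse}) combined with the finite-difference argument of Claim~\ref{claim:increasing}, and even that machinery only yields the much weaker scalar estimate $\P(|P_n(x)|\le\delta^2)=O(\delta^{1+c})$ of Lemma~\ref{l.almost-zero}, with a small gain $c>0$, not a product gain $r_1r_2$.

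Note also that your reduction forces you to need this strong bivariate input, whereas the paper's reduction is arranged so that the weak scalar bound suffices: instead of a net at spacing $n^{-B}$ (which, after a union over $\sim n^{B}$ points, requires a per-point probability well below $n^{-B}$), the paper uses subintervals of length $\delta=n^{-A}$ with $n^{-B}\ll\delta^2$ and exploits the flatness of $P_n$ near a near-double root to get $|P_n(x_I)|=O(\delta^2 n^3)$ at the subinterval centers (see~\eqref{eqn:bound}); the union over only $\delta^{-1}$ centers is then beaten by the gain $\delta^{1+c}$ once $A\ge C/c$. Your reductions of (ii) to (i) and of (iii) to a one-point event are fine and essentially mirror the paper, but without a proof of the joint small-ball estimate for discrete coefficients---uniformly up to $1-x_0=n^{-2+\ep}$---the proposal does not establish the theorem; if you want to salvage the plan, you should either prove a scalar bound with a gain and redo the net at the mismatched scales as the paper does, or confront the arithmetic structure directly via an inverse-type theorem.
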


We remark that Theorem \ref{theorem:repulsion:general} might continue to hold for an interval larger than $I_0$, but we do not try to pursue this matter here. We next turn to the continuous case.

\subsection{$\xi$ of type II} 

For any interval $I\subset \R$, using H\"older's inequality we have
$$P(\xi\in I) = O(|I|^{1-1/p}) \ \ .$$


Since $p>1$, it follows that $P(|\xi| < n^{-C}) = O(n^{-C(p-1)/p})$. Additionally, as $\xi$ has bounded $(2+\ep_0)$-moment, we have $\P(|\xi|> n^C) = \P(|\xi|^{2+\ep_0}> n^{(2+\ep_0)C}) = O(n^{-(2+\ep_0)C})$. Therefore with a loss of at most $O(n^{-C}) (C>1)$ in probability one can assume that 
\begin{equation}\label{eqn:cont:bound}
n^{-C_1}\le |\xi_i| \le n^{C_1}, \forall{1\le i\le n} \ \ ,
\end{equation}
where $C_1$ is a finite constant depending on $p$ and $C$.


Conditioning on this, it can be shown easily that if $|x| \le \frac 1 4 n^{-2C_1}$ then $|P_n(x)| \ge n^{-C_1}/2$ and if $|x| \ge 4 n^{2C_1}$ then $|P_n(x)| \ge n^{-C_1}x^n/2 \gg 1$. It follows that Theorem~\ref{theorem:repulsion:uniform} follows from 
the following analogue of Theorem \ref{theorem:repulsion:general}. We remark that in this theorem, we allow a more general setting where the coefficients $\xi_i$ are not necessarily iid, which is convenient in the proof. 

\begin{theorem}\label{theorem:repulsion:general'} Assume that $\xi_0, \dots, \xi_n$ have type II distributions with uniform implicit constants. 
Consider $P_n (x) = \xi_n x^n +\dots  +\xi_0$. 
Let  $C>1$ be any constant. Then there exists $B>0$ depending on $C,\ep_0$ and $p$ such that (i),(ii),(iii) of Theorem \ref{theorem:repulsion:general} hold with probability at least $1-O(n^{-C})$ with $I_0$ replaced by $I_0':=[-4n^{2C_1}, 4n^{2C_1}]$.  In other words,

 \begin{enumerate}
\item[(i)]  there does not exist  $x \in I_0'$ such that  
$$|P_n(x)|\le n^{-B} \wedge |P_n'(x)| \le n^{-B};$$

\item[(ii)] there do not exist  $x,x'\in I_0'$ with $|x-x'|\le n^{-B}$ such that  
$$P_n(x)=P_n(x')=0;$$

\item[(iii)]  for any given $a \in I_0' $, there is no $x$ with $|P_n(x)|\le n^{-B}$ and $|x-a| \le n^{-B}$. 
\end{enumerate}
\end{theorem}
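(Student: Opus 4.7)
The plan is to adapt the architecture of the proof of Theorem~\ref{theorem:repulsion:general} from the Type I case, replacing the discrete Littlewood--Offord-style anti-concentration by a continuous small-ball bound coming from the $L^p$-density hypothesis via iterated Young's inequality. We first restrict to a polynomially bounded domain: on the event \eqref{eqn:cont:bound}, we have $|P_n^{(j)}(x)|\le n^{j+1+C_1}$ for $|x|\le 1$ and $j=0,1,2$, and the reciprocal polynomial $\tilde P_n(y):=y^n P_n(1/y)$ has coefficients $\xi_{n-i}$ satisfying the same hypotheses. This reciprocal sends any (near-)double root of $P_n$ at $|x|\in(1,4n^{2C_1}]$ to one of $\tilde P_n$ at $|y|\in[\tfrac14 n^{-2C_1},1)$ with at worst a factor-$O(n)$ loss in the threshold; combined with the $x\mapsto -x$ symmetry, this reduces the problem to $x$ in the compact interval $J:=[\tfrac14 n^{-2C_1},1]$.

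We next discretize $J$ with a $\delta$-net $\mathcal N$ of spacing $\delta=n^{-D}$. A second-order Taylor expansion around the nearest net point $x_0$ shows that $|P_n(x)|,|P_n'(x)|\le n^{-B}$ forces $|P_n(x_0)|\le 3n^{-B}$ as long as $D\ge (B+3+C_1)/2$. The heart of the argument is then a single-point small-ball estimate: fixing an integer $k$ with $k(1-1/p)$ as large as desired (possible since $p>1$), we condition on $\xi_k,\dots,\xi_n$ and write $P_n(x_0)=S+c$ where $S:=\sum_{i=0}^{k-1}x_0^i\,\xi_i$. Since each $f_{\xi_i}\in L^p$ and each $x_0^i$ is nonzero with $|x_0|\ge\tfrac14 n^{-2C_1}$, iterated Young's inequality places the density of $S$ in $L^{r_k}$ with $1-1/r_k=k(1-1/p)$ (in $L^\infty$ once $k\ge\lceil p/(p-1)\rceil$) and norm at most $C_{p,\xi}\prod_{i=0}^{k-1}|x_0|^{-i(1-1/p)}\le C'_{p,\xi}\,n^{C_1 k^2(1-1/p)}$. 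H\"older then gives $\P(|S+c|\le 3n^{-B})\le C\,n^{C_1 k^2(1-1/p)-Bk(1-1/p)}$, and a union bound over $|\mathcal N|=O(n^D)$ net points proves (i) with probability at most $n^{D+C_1 k^2(1-1/p)-Bk(1-1/p)}=O(n^{-C})$ once $B$ is chosen large enough in terms of $C,p,\epsilon_0$.

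Parts (ii) and (iii) will then reduce to (i) and to the single-point estimate respectively. For (ii), Rolle's theorem produces $z\in(x,x')$ with $P_n'(z)=0$ and $|P_n(z)|\le\tfrac12\|P_n''\|_\infty(x-x')^2\le n^{3+C_1-2B}$, so $z$ violates (i) at exponent $2B-3-C_1$, which is accommodated by choosing the constant in (i) slightly larger. For (iii), a violating $x$ near a fixed $a$ satisfies $|P_n(a)|\le|P_n(x)|+\|P_n'\|_\infty|x-a|\le 2n^{2+C_1-B}$, and the single-point anti-concentration applied at $a$ controls this by $O(n^{-C})$; the degenerate case $|a|<\tfrac14 n^{-2C_1}$ is handled directly via $P_n(a)=\xi_0+O(n^{-C_1})$ together with the uniform lower bound $|\xi_0|\ge n^{-C_1}$. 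The main obstacle will be the Young step: while the exponent $k(1-1/p)$ can be made as large as desired, the multiplicative prefactor $n^{C_1 k^2(1-1/p)}$ also grows with $k$, so careful balancing is needed to ensure $B$ remains an $n$-independent constant; handling $p$ close to $1$ forces $k\sim 1/(p-1)$ and a correspondingly large but fixed $B$.
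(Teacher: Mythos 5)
Your proposal is correct and follows essentially the same route as the paper: reduce (i)--(iii) to a single-point small-ball estimate via a polynomially fine net, Taylor/Rolle, and a union bound, and prove that estimate by conditioning on all coefficients except the first $k=O_p(1)$ and bounding the density of $\sum_{i<k}x^i\xi_i$ through (iterated) Young's convolution inequality, with the $|x|^{-O_p(1)}$ scaling loss absorbed into a fixed power of $n$ that is beaten by taking $B$ large. The only deviations are minor: the paper works directly on $I_0'$ and first lowers the density exponents to $k/(k-1)\in(1,p)$ (using that the densities lie in $L^1\cap L^p$) so that a single application of Young lands in $L^\infty$ --- which is the clean way to make your ``$L^\infty$ once $k\ge\lceil p/(p-1)\rceil$'' bookkeeping precise --- whereas you additionally preprocess via the reciprocal polynomial and a sign flip to confine $x$ to $[\tfrac14 n^{-2C_1},1]$, a harmless (and arguably tidier) step that also furnishes the polynomial derivative bounds the net argument needs for $|x|>1$, a point the paper passes over silently.
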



In the next section, we discuss the strategy to  prove Theorems ~\ref{theorem:repulsion:general} and \ref{theorem:repulsion:general'}.

\section{The general strategy and the proof of Theorem   \ref{theorem:repulsion:general} }\label{section:repulsion:general}



In this section, we   first explain our strategy to prove (the harder) Theorem \ref{theorem:repulsion:general} and then deduce Theorem \ref{theorem:repulsion:general'} from this approach.
The rest of the proof of Theorem  \ref{theorem:repulsion:general} follows in the next section. 

Our general strategy is to reduce  the  event of having double (or near double) roots   to the event that a certain random variable takes value in a small interval. 
The key  step is to bound the probability of the latter, and here our main tool will be  a recently developed machinery, the so-called Inverse Littlewood-Offord theory
 (see \cite {NgV-advances}  for an introduction).

 Divide the interval $I_0$ into subintervals of length $\delta$ each (except for possibly the right-most interval which has the same right endpoint as $I_0$), where $\delta$ to be chosen sufficiently small (polynomially in $n$), and $B$ is then chosen large enough so that   $\delta^2 \gg n^{-B}$.

{\bf Near double roots.} Assume that there exists a subinterval $I$ and an element $x\in I$ such that $|P_n(x)|\le n^{-B}$ and $|P_n\rq{}(x)|\le n^{-B}$, then for $x_{I}$, the center of $I$, we have  
$$|P_n(x_{I})|\le \delta |P_n\rq{}(y)| + n^{-B}$$ for some $y\in I$. 

In the following, the implicit constants in $O(.)$ may depend on $N$ unless otherwise specified.

On the other hand, as $|P_n\rq{}(y)|\le \delta |P_n\rq{}\rq{}(z)|+ n^{-B}$ for some $z\in I$. From here, by the trivial upper bound $O(n^3)$ for the second derivative, we have
\begin{equation}\label{eqn:bound}
|P_n(x_{I})|=O (\delta^2 n^3 +n^{-B}).
\end{equation}

{\bf Repulsion.}  Assume that  $P_n (x)= P_n (x')=0$ for some $x,x'\in I_0$ with $|x-x'|\le n^{-B}$. Then there is a point $y$ between $x$ and $x'$ such that $P_n'(y)=0$. Thus, for any $z$ with$|z-y|\le 2\delta$,  
$$|P'(z)| \le 2 \delta n^{3}.$$ 

There is a point $x_I$ of some subinterval $I$ such that $|x_I-x| \le \delta$. For this $x_I$,  $|P_n(x_I)| = |x_I-x| | P'_n (z)| $ for some $z$ between $x$ and $x_I$. Because $x$ has distance at most $n^{-B} \ll \delta$ from $x'$, $x$ also has distance at most  $\delta$ from $y$, and so $z$ has distance at most $2\delta$ from $y$. It follows that
\begin{equation}\label{eqn:bound'}
 |P_n (x_I) |   \le 2 \delta^2 n^{3}.
\end{equation}

\begin{remark}\label{remark:continuous} One can also show that the {\bf repulsion} property is a direct consequence of  the {\bf near double roots} property  by choosing $B$ slightly larger if needed. Indeed, suppose that $P_n(x)=P_n(x')=0$ for some $x,x'\in I_0$ with $|x-x'|\le n^{-B}$, then consider the $y$ obtained as above. Thus $P'(y)=0$, and by using the trivial bound $O(n^2)$ for the derivative, 
$$|P_n(y)|= |P_n(y) - P_n(x)| =O(|x-y|n^2) =O(n^{-B+2}).$$
\end{remark}


{\bf Delocalization.} Assume that $P_n (x) =0$ and $|a-x| \le  n^{-B} \le \delta^2$, then $|P_n(a) | = |a-x| |P_n' (y)| $ for some $y$ between $a$ and $x$. On the other hand, $|P_n'(y) | \le n^{2}$ for any $y \in [0,1]$, it follows that 
\begin{equation}\label{eqn:bound''}
|P_n (a)| \le   n^{3} \delta^2.
\end{equation}

To prove  Theorem \ref{theorem:repulsion:general}, we will  show that the probability that \eqref{eqn:bound}, \eqref{eqn:bound'},\eqref{eqn:bound''} hold for any fixed point $x$ of $I_0$ is $O(\delta n^{-C})$. This definitely takes care of \eqref{eqn:bound''} and hence (iii) of Theorem~\ref{theorem:repulsion:general}. Since there are $O(\delta^{-1})$ subintervals $I$, by the  union bound we also obtain (i) and (ii) of Theorem~\ref{theorem:repulsion:general}.



In fact we will show the following stronger estimate

\begin{lemma} \label{l.almost-zero} Assume that $\xi$ has type I. Then there is a constant $c>0$ which depends only on $N$ and $\ep$ such that for every $A>0$ sufficiently large the following holds for  every $0<C_1\le C_2$ and $C_1n^{-A} \le \delta \le C_2 n^{-A}$ 
$$\sup_{x \in I_0} \P\Big(|P_n(x)| \le \delta^2\Big)  = O(\delta^{1+c}),$$
here the implicit constant may depend on $N$ and $c$ and $C_1$ and $C_2$.
\end{lemma}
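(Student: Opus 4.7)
I plan to prove Lemma~\ref{l.almost-zero} by combining a classical Littlewood--Offord-type bound with an inverse Littlewood--Offord contradiction argument, following the general strategy alluded to in the introduction.

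First I observe that $P_n(x) = \sum_{i=0}^n \xi_i a_i$ is a random linear form in the iid atom variables $\xi_i$ with weights $a_i = x^i$. For $x \in I_0$ we have $\log(1/x) \ge n^{-2+\ep}$, so when $\delta \asymp n^{-A}$ the bound $\log(1/\delta^2)/\log(1/x) \ge 2A\log n \cdot n^{2-\ep}$ is comfortably larger than $n$, and hence essentially all $n+1$ coefficients satisfy $a_i \ge \delta^2$. The classical Erd\H{o}s--Littlewood--Offord (or Kolmogorov--Rogozin) inequality applied to this linear form then gives the crude estimate
\[\P(|P_n(x)| \le \delta^2) = O(1/\sqrt n),\]
which already meets our target whenever $A(1+c) \le 1/2$. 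For large $A$, this bound is too weak and must be improved polynomially in $\delta$.

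To close the gap I would argue by contradiction and appeal to a continuous inverse Littlewood--Offord theorem of Nguyen--Vu (see~\cite{NgV-advances}). Assuming $\P(|P_n(x)| \le \delta^2) \ge \delta^{1+c}$, the inverse theorem produces a generalized arithmetic progression (GAP) $Q$, of rank $r = O_c(1)$ and volume $V = O(\delta^{-(1+c)} n^{-1/2})$, together with a common dilation $\lambda$, such that all but $O_c(\log n)$ of the rescaled coefficients $a_i/\delta^2$ lie within $O(1)$ of $\lambda Q$. Equivalently, a very long subprogression of the geometric progression $\{x^i\}_{i=0}^n$ is trapped by a dilate of a low-rank, polynomial-volume GAP.

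The final step is to extract a contradiction from the multiplicative rigidity of $\{x^i\}$. The key idea is that consecutive ratios $a_{i+1}/a_i = x$ are constant: if indices $i_1 < \dots < i_\ell$ all land near $\lambda Q$, then the pairwise ratios $x^{i_k - i_1} = a_{i_k}/a_{i_1}$ must lie in the ratio set $\{q/q' : q, q' \in Q\}$, a set of size at most $V^2$ and of low additive/multiplicative complexity (controllable via Pl\"unnecke--Ruzsa type bounds). Combined with the lower bound $\log(1/x) \ge n^{-2+\ep}$, this is designed to force $\ell < n - O_c(\log n)$ for $A$ chosen sufficiently large in terms of $c, N, \ep$, yielding the desired contradiction and hence the lemma with an explicit $c=c(N,\ep)>0$. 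The main obstacle is precisely this structural step: since $\log(1/x)$ may be as small as $n^{-2+\ep}$, very long geometric progressions could in principle fit into sets of moderate size, and extracting the sharp upper bound on $\ell$ will likely require passing to logarithmic coordinates (where $\{x^i\}$ becomes a genuine arithmetic progression) and executing a covering argument uniform over $x \in I_0$.
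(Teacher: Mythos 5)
There is a genuine gap, and it sits exactly where you acknowledge it: the structural contradiction is not carried out, and the sketch you give is unlikely to close it. Two separate problems arise. First, your proposal treats all of $I_0$ uniformly via the inverse Littlewood--Offord theorem, but in the bulk range $1/(N+1)<x\le 1-\log^2 n/n$ the inverse theorem gives essentially no purchase: all but $O(A\log n)$ of the weights $x^i$ are already smaller than the approximation scale, so they sit within the allowed error of $0\in Q$ and the GAP conclusion is vacuously satisfiable (and the permitted exceptional set, which is at least $n^{\ep_0}$, swallows the few large weights). Your preliminary remark does not rescue this regime either: for $x$ bounded away from $1$ only $O(A\log n)$ coefficients exceed $\delta^2$, so Erd\H{o}s--Littlewood--Offord at scale $\delta^2$ gives only $O(1/\sqrt{\log n})$, not $O(1/\sqrt n)$, and in any case $n^{-1/2}$ is far weaker than $\delta^{1+c}=n^{-A(1+c)}$ for large $A$. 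The paper handles the bulk by a completely elementary lacunarity argument: choosing $\ell$ with $x^\ell<1/(2N+1)\le x^{\ell-1}$ and $k$ maximal with $x^{k\ell}\ge\delta^2$, the possible values of $\sum_{j\le k}\ep_j x^{j\ell}$ are $x^{k\ell}$-separated, giving $\P(|P_n(x)|\le\delta^2)\le 3(2N)^{-k}$, which is then compared to $\delta$ through the defining inequalities for $\ell$ and $k$ (with an extra twist, inserting terms $x^{8j+1}$, in the Bernoulli window $1/2<x<1/2+c_0$). Your proposal contains no argument for this range.

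Second, in the edge range $1-\log^2 n/n<x\le 1-n^{-2+\ep}$, where the inverse theorem is the right tool, the route you sketch (pairwise ratios of trapped elements lying in a ratio set of size $V^2$, controlled by Pl\"unnecke--Ruzsa) does not engage the real difficulty. As you yourself note, in logarithmic coordinates $\{x^i\}$ is an honest arithmetic progression, and arithmetic progressions embed perfectly into low-rank GAPs of modest size; so no contradiction can come from additive or multiplicative complexity of the index structure alone. The contradiction in the paper comes from a much finer mechanism: the approximation scale is tuned to $\beta=\alpha(1-x)^A\ell_0$, i.e.\ to the size of the $A$-th finite difference $x^{i-A}(1-x)^A$ of the sequence; the inverse theorem is invoked in a strengthened form in which the GAP is $C$-proper and all its steps are integral multiples of $T_0\beta/\ell_0$; taking $A$-th finite differences of the GAP coordinates of the approximants forces these integer combinations to lie in a fixed finite set of nonzero values, and a combinatorial growth claim (a sequence whose $A$-th differences are bounded below by $1$ must contain entries of size $\gtrsim m^A$) then forces $|Q|\gtrsim (n^{1-\ep_0}/\log^2 n)^A$, contradicting $|Q|=O(\rho^{-1})$ when $\rho\ge C_1(n^{1-\ep_0}/\log^2 n)^{-A}$. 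None of this is recoverable from ratio-set cardinality bounds, and your quantitative reading of the inverse theorem (exceptional set $O_c(\log n)$, volume $O(\delta^{-(1+c)}n^{-1/2})$) is also not what the cited results provide. So the proposal correctly names the toolbox for the edge case but leaves the decisive step unproved and supplies no argument at all for the bulk.
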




(The fact that we have an extra factor of $n^3$ or $n^2$ in \eqref{eqn:bound}, \eqref{eqn:bound'},\eqref{eqn:bound''} is not an issue here, since these powers could be included  as part of the $\delta$ of Lemma~\ref{l.almost-zero}.)

Note that by making $c$ slightly smaller it suffices to prove the Lemma for $C_1=C_2=1$, i.e. $\delta=n^{-A}$, which we will assume in the following. We will justify this key lemma in the next section. In the rest of this section, we apply our argument to handle distributions of Type II. 

\subsection{Proof of  Theorem \ref{theorem:repulsion:general'}} By following the same argument, and by \eqref{eqn:cont:bound}, it is enough to show the following analog of Lemma \ref{l.almost-zero} for 
Type II variables. Recall that we are working under the assumption that $\xi_0,\dots, \xi_n$ are uniformly Type II and independent, but they are not required to be identically distributed.

\begin{fact} \label{f.almost-zero} There is a constant $c>0$ which depends only on $p$  such that for every $A>0$ sufficiently large the following holds for  $\delta = n^{-A}$ 
$$\sup_{x \in I_0'} \P\Big(|P_n(x)| \le \delta^2\Big)  = O(\delta^{1+c}).$$
\end{fact}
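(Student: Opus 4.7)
The plan is to bound the density of $P_n(x)$ by applying iterated Young's convolution inequality to a sub-sum of $k=k(p)$ well-chosen coefficients, and to fall back on the truncation~\eqref{eqn:cont:bound} when $|x|$ is too small for that bound to be useful.

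\textbf{Step 1 (truncation for small $|x|$).} Arguing as in the reduction establishing~\eqref{eqn:cont:bound}, but with a large constant $C_0\ge A(1+c)$, we may assume that $n^{-C_1}\le|\xi_i|\le n^{C_1}$ for all $i$ at the cost of losing an event of probability $O(n^{-C_0})$, which is absorbed into the desired $O(\delta^{1+c})$ bound. On this event, whenever $|x|\le \tfrac14 n^{-2C_1}$ the geometric estimate used in the paper gives $|P_n(x)|\ge \tfrac12 n^{-C_1}\gg\delta^2$, so the event $|P_n(x)|\le\delta^2$ is impossible. It therefore suffices to prove the bound for $x$ with $|x|\in[\tfrac14 n^{-2C_1},\,4n^{2C_1}]$.

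\textbf{Step 2 (choice of $k$ smoothing indices).} Fix an integer $k=k(p)\ge2$ so large that the $k$-fold convolution of any probability density lying in $L^p(\R)$ belongs to $L^\infty(\R)$. By iterated Young's inequality---combined with interpolation against $L^1$ at the final step when $p':=p/(p-1)$ is not an integer---one may take $k:=\lceil p'\rceil+1$. For each $x$ in the range above, pick a set $J\subset\{0,1,\ldots,n\}$ of size $k$ maximising $\prod_{j\in J}|x|^j$: take $J=\{0,1,\ldots,k-1\}$ if $|x|\le 1$, and $J=\{n-k+1,\ldots,n\}$ if $|x|>1$.

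\textbf{Step 3 (density bound and conclusion).} Split $P_n(x)=S+Y$ with $S:=\sum_{j\in J}x^j\xi_j$ and $Y:=\sum_{j\notin J}x^j\xi_j$ independent. Writing $f_j$ for the density of $\xi_j$ and $M:=\sup_j\|f_j\|_p$, the scaling identity gives $\|\rho_{x^j\xi_j}\|_p=|x|^{-j/p'}\|f_j\|_p\le M|x|^{-j/p'}$. Iterated Young then yields
\[
\|\rho_S\|_\infty \le C(p)\prod_{j\in J}\|\rho_{x^j\xi_j}\|_p \le C(p,M)\,|x|^{-\sigma/p'},\qquad \sigma:=\sum_{j\in J}j.
\]
With the above choice of $J$, one checks directly that $|x|^{-\sigma/p'}\le n^K$ for some constant $K=K(p,C_1)$ (the constraint $|x|\ge\tfrac14 n^{-2C_1}$ handles the case $|x|\le 1$, while $|x|\ge 1$ makes the factor harmless in the other case). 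Since $\rho_{P_n(x)}=\rho_S*\rho_Y$ satisfies $\|\rho_{P_n(x)}\|_\infty\le\|\rho_S\|_\infty$,
\[
\P\bigl(|P_n(x)|\le\delta^2\bigr)\le 2\delta^2\|\rho_{P_n(x)}\|_\infty = O(n^K\delta^2) = O(n^{K-2A}),
\]
which is at most $n^{-A(1+c)}=\delta^{1+c}$ once we set $c:=(A-K)/A\in(0,1)$ and take $A\ge 2K$.

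\textbf{Main obstacle.} The technical heart of the argument is Step~2: for $p\in(1,2)$ a single convolution $L^p*L^p$ only raises integrability to $L^{q_1}$ with $1/q_1=2/p-1$, and one must iterate the recursion $1/q_{j+1}=1/q_j+1/p-1$ finitely many times to push the exponent up to $\infty$, with a final interpolation against $L^1$ (valid since we convolve probability densities) when $p/(p-1)$ is not an integer. A secondary subtlety is that the $\xi_i$'s need not be identically distributed in Theorem~\ref{theorem:repulsion:general'}, but Young's inequality applies verbatim to convolutions of distinct $L^p$ densities, and the uniform Type~II bound $\sup_i\|f_i\|_p=O(1)$ absorbs this.
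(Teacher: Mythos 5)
Your Steps 2--3 are essentially the paper's own argument: the paper likewise isolates a block of $k=k(p)$ low-degree coefficients, bounds the density of $R_k(x)=\xi_0+x\xi_1+\dots+x^{k-1}\xi_{k-1}$ in $L^\infty$ by iterated Young's inequality (with exponent $k/(k-1)<p$ instead of your $L^p$-plus-interpolation bookkeeping), uses the same scaling identity $\|\rho_{x^j\xi_j}\|_q=|x|^{-j(q-1)/q}\|\rho_j\|_q$, and converts the resulting polynomial bound $\|r_k\|_\infty=O(n^{C_2})$ into $\P(|P_n(x)|\le\delta^2)=O(n^{C_2}\delta^2)=O(\delta^{1+c})$ by taking $A$ large. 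So the heart of your proposal is fine and matches the paper; the divergence is in Step 1 and in the order in which the constants are chosen, and there the proposal has a genuine gap.

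In Step 1 you choose the truncation so that its failure probability is $O(n^{-C_0})$ with $C_0\ge A(1+c)$; by the H\"older bound $\P(|\xi_i|<n^{-C_1})=O(n^{-C_1(p-1)/p})$ this forces $C_1\gtrsim A(1+c)\,p/(p-1)$, i.e.\ $C_1$ grows linearly in $A$. Your small-$|x|$ cutoff is $\tfrac14 n^{-2C_1}$, so the constant $K=K(p,C_1)$ of Step 3 also grows linearly in $A$ (with slope at least $2$, since $\sigma\ge 1$), and then $n^{K}\delta^2=n^{K-2A}$ is not small; the closing prescription ``set $c:=(A-K)/A$ and take $A\ge 2K$'' is circular, because $K$ depends on $A$ through $C_0$, and $c$ (which must depend only on $p$) was already used to pick $C_0$. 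Moreover this cannot be repaired by cleverer bookkeeping when $1<p\le 2$: at $x=0\in I_0'$ one has $P_n(0)=\xi_0$, and there are Type II variables (density $\asymp|t|^{-a}$ near $0$ with $a$ close to $1/p$) for which $\P(|\xi_0|\le\delta^2)\asymp\delta^{2(1-1/p)}\gg\delta^{1+c}$, so no truncation whose failure probability is $O(\delta^{1+c})$ can exist in that regime (the paper glosses over this point by asserting its density bound for all of $I_0'$, but its constants are at least consistently ordered). The correct structure, which is what the paper implicitly uses, is to keep $C_1$ fixed in advance --- it depends only on $p$ and the exponent $C$ of Theorem \ref{theorem:repulsion:general'}, not on $A$ --- to prove your density bound with $K=K(p,C_1)$ independent of $A$ on the range $|x|\ge\tfrac14 n^{-2C_1}$ (the only range the application needs, since on the event \eqref{eqn:cont:bound} smaller $|x|$ cannot produce near-roots), and only then to take $A$ large compared with $K$, with a fixed $c$, say $c=1/2$.
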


Thanks to the analytic properties of Type II variables, this statement is much easier to prove than Lemma \ref{l.almost-zero}; the details follow. 


\begin{proof}[Proof of Fact \ref{f.almost-zero}]For any $I\subset \R$, by H\"older's inequality 
$$P(\xi_0\in I) = O(|I|^{1-1/p}) \ \ .$$
Thus, by conditioning on $\xi_1,\dots,\xi_n$, for any $x$ we have
$$\P(|\sum_{i=0}^n \xi_i x^i|\le \delta^2)=\P(-\delta^2 -  \sum_{i=1}^n \xi_i x^i\le \xi_0 \le \delta^2-  \sum_{i=1}^n \xi_i x^i) = O(\delta^{2(1-1/p)}).$$
The desired conclusion follows immediately if $p>2$. To handle the general case, let $\rho_j$ denote the density of the distribution of $\xi_j$, which is $p$-integrable for $p>1$ by the given assumption. Since $\int \rho_j(x)dx=1$, it follows immediately via convexity that  $\rho_j$ is also $q$-integrable for every $q \in [1,p]$ and furthermore 
$$\sup_j \|\rho_j\|_q = O_q(1)$$
thanks to the fact that $\xi_j$'s are uniformly Type II. For convenience, let $C_q$ denote the right hand side in the estimate above.

Now, let $k$ be a large integer that depends only on $p$ such that $k/(k-1) < p$. By Young's convolution inequality and an induction over $k$, it is clear that for any family of functions $g_0, \dots, g_{k-1}$ we have
$$\|g_0 \ast \dots \ast g_{k-1} \|_{\infty} \le \prod_{j=0}^{k-1} \|g_j\|_{k/(k-1)}.$$
Consider the random variable $R_k = R_k(x)= \xi_0 + x\xi_1+\dots + x^{k-1}\xi_{k-1}$.  Since $\xi_j$'s are independent,  the density of $R_k$ (which we will denote by $r_k$) equals to the convolution of the density of $\xi_0$, $x\xi_1$, \dots, $x^{k-1}\xi_{k-1}$. Let $g)j$ denote the density of $x^j \xi_j$, clearly $g_{j}(t) = x^{-j} \rho_j(t/x^j)$, and
\begin{align*}
\|g_j\|_{q} &= (\int x^{-jq} |\rho_j(t/x^j)|^qdt)^{1/q}\\
&=x^{-j(q-1)/q} \|\rho_j\|_q.
\end{align*}

Consequently,
\begin{align*}
\|r_k\|_\infty &= \|g_0 \ast \dots \ast g_{k-1}\|_\infty \le \prod_{j=0}^{k-1} x^{-j/k} \|\rho_j\|_{k/(k-1)}\\
&\le (C_{k/(k-1)})^k x^{-(k-1)/2} = O_k(x^{-(k-1)/2}).
\end{align*}

Therefore for every $x\in I_0'$ we have
$$\|r_k\|_\infty=O(n^{C_2}),$$
where $C_2$ is a finite constant depending only on $p$ and $C_1$.

Now, for every $n\ge k$ ( recall that  $k$ is a constant) and $x\in I_0'$ we have
\begin{align*}
\P(|P_n(x)| \le \delta^2) &= \P(-\delta^2 -  \sum_{i=k}^n \xi_i x^i\le R_k(x) \le \delta^2-  \sum_{i=k}^n \xi_i x^i)\\ 
&= O(\delta^2 \|r_k\|_\infty) = O(n^{C_2}\delta^2).
\end{align*}

Thus by choosing   $A$ sufficiently large we obtain the desired estimate (with $\delta=n^{-A}$ and any $c<1$).
\end{proof}






\section{Proof of Lemma~\ref{l.almost-zero}:  bounds on small value probability for $P_n$}

By making $c$ smaller if necessary, it suffices to prove the Lemma for $\delta=n^{-A}$ where $A$ is sufficiently large. Also, as indicated before, $B$ will be chosen such that $\delta^2 \gg n^{-B}$ (for instance $B=2A+10$). Fix $x\in I_0$, all the implicit constants below are independent of $x$. 



We divide $I_0$ into $(1/(N+1),1-\log^2 n/n] \cup (1-\log^2 n/n,1-n^{-2+\ep}]$ and prove the lemma for $x$ inside each interval separately.  For the first interval $(1/(N+1),1-\log^2 n/n]$, we will present a proof for the Bernoulli case (i.e. $N=1$) first to demonstrate the main ideas, and then modify the method for uniform distributions later. Our treatment for $ (1-\log^2 n/n,1-n^{-2+\ep}]$ works for both settings.

\subsection{Proof for $1/(N+1)<x\le 1-n^{-1}\log^2 n$, the Bernoulli case}\label{subsection:smallx:Ber}

Roughly speaking, the proof exploits the lacunary property of the sequence $\{1,x,\dots,x^n\}$ in this case.   

Let $\ell \in \mathbb Z$ be  such that 
$$x^{\ell}< 1/2 \le x^{\ell-1}.$$ 

As $x\le 1-n^{-1}\log^2 n$, we must have 
$$\ell =O(n/\log^2 n)=o(n).$$  

Note that if $x < 1/\sqrt{2}$ then $\ell=2$. As the treatment for this case is a bit more complicated, we postpone it for the moment. In the sequel we assume that $x >1/\sqrt{2}$, and thus $\ell \ge 3$. 

{\bf Treatment for $1/\sqrt{2}\le x \le 1-n^{-1}\log^2 n$.} Let $k$ be the largest integer such that 
$$x^{\ell k} \ge \delta^2\equiv n^{-2A}$$ 

In other words,
$$k = \lfloor\frac{(2A)\log n}{\ell\log (1/x)}\rfloor.$$
where $\lfloor x \rfloor$ denote the largest integer that does not exceed $x$.

As $x \le 1-n^{-1}\log^2 n$ and $n$ is sufficiently large, it follows that  $k \ell$ is strictly less than $n$ and $k$ is at least $\Omega(\log n)$, one has the following trivial bound 
$$k\ge 10.$$

We say that a finite set $X$ of real numbers is $\gamma$-separated if the distance between any two elements of $X$ is at least $\gamma$.

\begin{claim}\label{claim:separation:1}
The set of all possible values of $\sum_{1\le j\le k} \ep_j x^{j\ell}, \ep_j\in \{-1,1\}$ is $2x^{k\ell}$-separated. 
\end{claim}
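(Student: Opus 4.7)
The plan is to fix two distinct sign sequences $(\epsilon_j), (\epsilon_j') \in \{-1,1\}^k$ and show their weighted sums differ by at least $2x^{k\ell}$. Writing the difference as $2\sum_{j=1}^k d_j x^{j\ell}$ with $d_j := (\epsilon_j-\epsilon_j')/2 \in \{-1,0,1\}$ and not all zero, the claim reduces to showing $\bigl|\sum_j d_j x^{j\ell}\bigr|\ge x^{k\ell}$ for every nonzero $(d_j) \in \{-1,0,1\}^k$.

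The natural move is to isolate the lowest-order nonzero term. Let $j_0$ be the smallest index with $d_{j_0}\neq 0$; by the triangle inequality,
\[
\left|\sum_{j=1}^k d_j x^{j\ell}\right| \;\ge\; x^{j_0\ell} - \sum_{j=j_0+1}^k x^{j\ell}.
\]
If $j_0 = k$ the tail sum is empty and the right side equals $x^{k\ell}$ exactly. If $j_0 < k$, I would substitute $y := x^\ell$ and $m := k-j_0 \ge 1$, divide through by $x^{k\ell}$, and reduce the target bound to the elementary inequality
\[
1 \;\ge\; 2y^m + \sum_{i=1}^{m-1} y^i.
\]
The right side is strictly increasing in $y>0$, and a direct computation (or the identity $\sum_{i=1}^{m-1} y^i + 2y^m = \sum_{i=1}^{m} y^i + y^m$) shows that it evaluates to exactly $1$ at $y=1/2$. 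Hence the hypothesis $x^\ell < 1/2$, built into the choice of $\ell$, gives the required bound for every $m\ge 1$.

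Multiplying through by $2$ then yields the claimed $2x^{k\ell}$-separation. I anticipate no real obstacle here: the whole argument is a direct exploitation of the lacunarity ratio $x^\ell<1/2$ chosen in defining $\ell$, and the only analytic input beyond the triangle inequality is the geometric-series bound $\sum_{i\ge 1} y^i = y/(1-y) < 1$ when $y < 1/2$.
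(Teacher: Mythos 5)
Your proof is correct and follows essentially the same route as the paper's: write the difference of two sums as twice a signed sum of the $x^{j\ell}$, isolate the lowest-index (largest) term, and use $x^{\ell}<1/2$ to dominate the tail, the sharp input being that $\sum_{i=1}^{m-1}y^{i}+2y^{m}$ equals exactly $1$ at $y=1/2$ and is increasing in $y$. The paper compresses this into one line (``as $x^{\ell}<1/2$, the distance is more than $2x^{k\ell}$''); your write-up merely makes the finite-sum comparison explicit, and note that it is this finite identity, not the looser bound $\sum_{i\ge 1}y^{i}<1$ mentioned at the end, that yields the $2x^{k\ell}$ threshold.
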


\begin{proof} (of Claim \ref{claim:separation:1})
Take any two elements of the set. Their distance  has the form  $2|\ep_{m_1} x^{m_1\ell} + \dots+ \ep_{m_j} x^{m_j\ell}|$ for some $1\le m_1<\dots<m_j \le k$. As $x^{\ell}<1/2$, this distance  is more than $2x^{k\ell}$.
\end{proof}

Using Claim \ref{claim:separation:1}, we have
\begin{equation}\label{eqn:bound:condition}
\sup_R\P_{\xi_{j\ell},1\le j\le k}(|\sum_{j=1}^k \xi_{j \ell} x^{j \ell}+R| \le x^{k \ell})\le 2^{-k}.
\end{equation}

By conditioning on other coefficients $\xi_m$'s i.e. $m\not\in \{\ell,2\ell,\dots,k\ell\}$, it follows that
\begin{align}\label{eqn:bound:gain}
\P(|P_n(x)|&\le \delta^2 ) \le 2^{-k}.
\end{align}

Recall that $x^{\ell}< 1/2 \le x^{\ell-1}$. Using the fact that $\ell\ge 3$ and $k\ge 10$, we obtain $(\ell-1)k \ge \frac{3}{5}\ell (k+1)$. It follows that
$$2^{-k}\le x^{(\ell-1)k} \le   x^{3(k+1)\ell/5} \le (\delta^2)^{3/5} = \delta^{6/5}.$$
Therefore
$$\P(|P_n(x)|\le \delta^2) = O(\delta^{6/5})$$
as desired.  This completes the treatement of the case $1/\sqrt{2}\le x \le 1-n^{-1}\log^2 n$.

{\bf Treatment for $1/2+c_0 <x <\sqrt{1/2}$. } Let $c_0$ be a small positive constant. We  show that the treatment above also carries over for this range of $x$ with a minor modification. 


As $x< \sqrt{1/2}$ and $x^{\ell}< 1/2 \le x^{\ell-1}$,  we must have $\ell=2$ for all $x$ in this range. Recall that the integer $k$ was chosen so that
$$x^{2(k+1)} < \delta^2 \le x^{2k}  \ \ .$$
By following Claim \ref{claim:separation:1}, we again arrive at \eqref{eqn:bound:condition} and \eqref{eqn:bound:gain}.

Now, as $x \ge 1/2+c_0$, we have $1/2 \le x^{1+c_1}$ for some small positive constant $c_1$ depending on $c_0$. As such, using the fact that $k$ has order $\log n$, we have $\frac k{k+1} \ge \frac{1/2+c_1/4}{1/2+c_1/2}$ for $n$ sufficiently large. It follows that
\begin{align*}
2^{-k}\le x^{(1+c_1)k}&=(x^{2k})^{1/2+c_1/2} \\
&\le  (x^{2(k+1)})^{1/2+c_1/4}    <  (\delta^2)^{1/2 +c_1/4}  \ \ .
\end{align*}

We obtain
$$\P(|P_n(x)| \le \delta^2) = O( \delta^{1+c_1/2} )$$
as desired.

{\bf Treatment for $1/2<x < 1/2+c_0$.} Recall that in this case $\ell=2$. For this range of $x$ we introduce the following improvement of Claim \ref{claim:separation:1}.

\begin{claim}\label{claim:separation:2} Assume that $1/2<x<1/2+c_0$ and $c_0$ is sufficiently small. Then the set of all possible values of $\sum_{j=0}^{k}\ep_{2j} x^{2j} + \sum_{j=0}^{\lfloor  k/8 \rfloor} \ep_{8j+1}x^{8j+1}, \ep_i\in \{-1,1\}$, is $x^{2k}/8$-separated.
\end{claim}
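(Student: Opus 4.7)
The plan is to adapt the proof of Claim~\ref{claim:separation:1} to the enriched set. Any two distinct sign vectors produce values whose difference has the form
\[
Q(x) \,=\, 2 \sum_{j=0}^k \delta_{2j}\, x^{2j} \,+\, 2 \sum_{j=0}^{\lfloor k/8 \rfloor} \delta_{8j+1}\, x^{8j+1},
\]
with $\delta_i \in \{-1,0,1\}$ not all zero, so the claim is equivalent to showing $|Q(x)| \ge x^{2k}/8$ for every such nonzero $Q$.

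Let $m$ be the smallest index with $\delta_m \ne 0$. The degrees that can occur lie in $E \cup O$, where $E = \{0,2,\dots,2k\}$ and $O = \{1+8j : 0 \le j \le \lfloor k/8\rfloor\}$; in particular $m \le 2k$. Factoring out the leading monomial and splitting the remaining contribution into an even and an odd tail gives $|Q(x)| \ge 2x^m - 2T_E(x) - 2T_O(x)$, where, using that consecutive elements of $E$ are spaced by $2$ and consecutive elements of $O$ are spaced by $8$, one has $T_E(x) \le x^{m+2}/(1-x^2)$ when $m$ is even (and $\le x^{m+1}/(1-x^2)$ otherwise), and $T_O(x) \le x^{s(m)}/(1-x^8)$, with $s(m)$ the smallest element of $O$ strictly exceeding $m$. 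A direct case check shows $s(m) \ge m+1$, with equality if and only if $8 \mid m$; hence the worst case is $m$ even and divisible by $8$, giving
\[
|Q(x)| \,\ge\, 2x^m \Bigl(1 - \tfrac{x^2}{1-x^2} - \tfrac{x}{1-x^8}\Bigr).
\]

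The pivotal step is the numerical inequality $1 - \tfrac{x^2}{1-x^2} - \tfrac{x}{1-x^8} > \tfrac{1}{16}$ at $x = 1/2$, which evaluates to $1 - \tfrac{1}{3} - \tfrac{128}{255} = \tfrac{42}{255} > \tfrac{1}{16}$. By continuity of this expression in $x$, the same lower bound persists on $(1/2,\, 1/2 + c_0)$ provided $c_0$ is chosen sufficiently small. Combining with $m \le 2k$ and $x < 1$ yields $|Q(x)| \ge x^m/8 \ge x^{2k}/8$, as required.

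I expect the main obstacle to be the borderline case $8 \mid m$: in that configuration the odd-indexed tail begins at $x^{m+1}$, uncomfortably close to the leading $2x^m$, and it is precisely this case that dictates the choice of spacing $8$ in the second sum. A tighter spacing would make $x/(1-x^8)$ grow large enough to overwhelm the leading term near $x = 1/2$, whereas the chosen spacing preserves a definite positive gap, which the continuity argument then upgrades to the uniform $1/16$ lower bound on a right-neighbourhood of $1/2$.
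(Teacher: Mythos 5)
Your proof is correct and follows essentially the same route as the paper: factor out the lowest-degree term of the difference and dominate the remaining contributions by the geometric tails $\sum_{j\ge 1}x^{2j}$ and $\sum_{j\ge 0}x^{8j+1}$, then verify that the resulting bracket exceeds $\tfrac{1}{16}$ for $x$ near $\tfrac12$, so the separation is at least $2x^{2k}\cdot\tfrac1{16}=x^{2k}/8$. Your explicit evaluation $1-\tfrac13-\tfrac{128}{255}=\tfrac{42}{255}>\tfrac1{16}$ is exactly the intended content of the paper's one-line estimate (whose displayed bracket carries a superfluous extra $-x$), and your case analysis over $m \bmod 8$ merely makes the paper's implicit worst-case configuration explicit.
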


\begin{proof} (of Claim \ref{claim:separation:2})

The distance between   any two terms is at least
$$2x^{2k}[1-x-\sum_{j=1}^{\infty} x^{2j} - \sum_{j=0}^{\infty} x^{8j+1}] > x^{2k}/8,$$
where we used the fact that the factor within the bracket is at least $1/16$, provided that  $c_0$ is chosen  sufficiently small.
 \end{proof}

Using Claim \ref{claim:separation:2}, we obtain the following slight improvement of \eqref{eqn:bound:condition}
\begin{equation}\label{eqn:bound:condition:improvement}
\sup_R\P_{\xi_{2j},0\le j\le k}(|\sum_{j=0}^{k} \xi_{2j} x^{2j}+\sum_{j=0}^{\lfloor k/8 \rfloor} \xi_{8j+1}x^{8j+1} +R| \le x^{2k}/8)\le 22^{-k-\lfloor k/8 \rfloor} \le 2^{-9k/8+2}.
\end{equation}

Now, as $k$ is order $\log n$, by taking $n$ large we have
$$2^{-9k/8}\le x^{9k/8} \le x^{10(k+1)/9} \le (\delta^2)^{5/9} = \delta^{10/9} \ \ ,$$
which implies the desired conclusion.
 

\subsection{Proof for $1/(N+1)<x\le 1-n^{-1}\log^2 n$, the uniform case}\label{subsection:smallx:uniform}

The $N=1$ case was treated before, so we only consider $N>1$. 
 
As before, let $\ell$ be integer such that
\begin{equation}\label{e.ell-choice}
x^{\ell} < \frac 1{2N+1} \le x^{\ell-1} \ \ .
\end{equation}
Since $x>1/(N+1)$, it follows that $\ell \ge 2$, and 
$$\ell = O_N(\frac n{(\log n)^2}).$$

Let $k$ be the largest integer such that
$$x^{\ell k}  \ge \delta^2 =n^{-2A}$$

We first show that $k \ge \Omega_N(\log n)$ while $\ell k = o_N(n)$. In deed,  by definition we have
$$k  = \lfloor  \frac{2A\log n}{\ell  \log (1/x)} \rfloor.$$
 Since  $\log (1-a) < -a$ for every $a\in (0,1)$, it follows that
$$\log (1/x) \ge - \log(1-\frac{(\log n)^2}n) \ge \frac{(\log n)^2}n$$
therefore
$$k \le 1+ O(\frac n{\ell  \log n})$$
Since $\ell  = o(n)$, it follows that $k  \ell  <n$ for $n$ sufficiently large. Furthermore,   it follows from \eqref{e.ell-choice} that
$$\frac 1{2N+1}>x^{\ell}  \ge  \frac {x}{2N+1} \ge \frac 1{(N+1)(2N+1)}.$$
Therefore
$$0<\ell  \log (1/x)  \le O_N(1).$$
Hence $k \ge c_N \log n$ for some $c_N$ depending only on $N$. 

Consider the sequence $\sum_{1\le j\le k}\ep_j x^{j\ell}$ where $\ep_j\in \{\pm 1,\pm 2,\dots, \pm N\}$.  We'll show the following separation property:

\begin{claim}\label{cl.separation}
The set of all possible values of  $\sum_{1\le j\le k}\ep_j x^{j\ell}$, where $\ep_j\in \{\pm 1,\pm 2,\dots, \pm N\}$, is $x^{k\ell}$-separated. 
\end{claim}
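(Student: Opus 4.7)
\textbf{Proof plan for Claim~\ref{cl.separation}.} Given two distinct configurations $(\ep_j), (\ep_j') \in \{\pm 1, \dots, \pm N\}^k$, I will write the difference between the corresponding sums as $S := \sum_{j=1}^k d_j y^j$, where $y := x^{\ell}$ and $d_j := \ep_j - \ep_j'$ is an integer in $\{-2N, \dots, 2N\}$, with at least one $d_j$ nonzero. The defining property of $\ell$ gives $y < 1/(2N+1)$, and the target separation bound is equivalent to $|S| \ge y^k$.

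The key move is to rescale by $y^{-k}$ so that the problem becomes a lower bound on an integer polynomial away from its roots. Explicitly,
$$y^{-k} S \;=\; \sum_{j=1}^k d_j z^{k-j} \;=:\; Q(z), \qquad z := 1/y,$$
has integer coefficients, and the constraint on $y$ becomes $z > 2N+1$. The claim is therefore equivalent to $|Q(z)| \ge 1$ for all such $z$.

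To verify this, let $m$ be the smallest index with $d_m \ne 0$. The case $m = k$ is immediate since $Q(z) = d_k$ and $|d_k| \ge 1$. For $m < k$, the leading term $d_m z^{k-m}$ contributes at least $z^{k-m}$ in absolute value, while a geometric-sum bound controls the tail by $2N(z^{k-m}-1)/(z-1)$. Collecting over a common denominator yields
$$|Q(z)| \;\ge\; \frac{z^{k-m}(z-1-2N) + 2N}{z-1},$$
and for $z \ge 2N+1$ one has $z - 1 - 2N \ge 0$ and $z^{k-m} \ge 1$, so the numerator is at least $(z-1-2N) + 2N = z - 1$, giving $|Q(z)| \ge 1$ as required.

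The conceptual heart of the proof is the rescaling in the second step: it converts a real separation problem into a lower bound for an integer polynomial evaluated outside a neighborhood of its roots, after which a routine geometric-sum estimate closes the argument. I anticipate no real obstacle beyond finding this reformulation; I note that the threshold $1/(2N+1)$ is sharp, since at $y = 1/(2N+1)$ the pair $(\ep_j)=(2,-N,\dots,-N)$ and $(\ep_j')=(1,N,\dots,N)$ realizes $|S| = y^k$ exactly, showing that the claim cannot be improved under these hypotheses.
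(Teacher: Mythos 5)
Your proof is correct and, after the change of variable $z=1/y$, it is essentially the paper's argument: both write the difference of two sums as $\sum_j d_j x^{j\ell}$ with integer coefficients $|d_j|\le 2N$, and use $x^{\ell}<1/(2N+1)$ so that the lowest-order nonzero term dominates the geometric tail, giving separation at least $x^{k\ell}$. The rescaling to an integer-coefficient polynomial bound $|Q(z)|\ge 1$ for $z\ge 2N+1$ is a cosmetic repackaging of the same estimate, which the paper states more tersely.
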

\proof[Proof of Claim~\ref{cl.separation}] Take any two terms of the sequence. Consider their difference, which has the form $b_{m_1}x^{m_1\ell}+\dots + b_{m_j} x^{m_j\ell}$
for some $1\le m_1<\dots<m_j \le k$, and $|b_{m_1}|\ge 1$ and $|b_{m_2}|,\dots, |b_{m_j}| \le 2N$. As $x^{\ell} <1/(2N+1)$, this difference is more than $x^{k\ell}$. \endproof

It follows that for every $R$
$$\P(|\sum_{j=1}^k \xi_{j\ell} x^{j\ell} + R| \le x^{kl}) \le 3 (\frac 1{2N})^k.$$

Now, using the independence of $\xi_0,\dots, \xi_n$ and by conditioning on $\xi_j$'s with $j\not\in \{\ell,2\ell,\dots,k\ell\}$,
we obtain
$$\P(|P_n(x)| \le x^{k \ell}) \le 3 (\frac {1}{2N})^{k}.$$

Thus, by the choice of $k$, we obtain the key bound
\begin{equation}\label{eqn:uniform:key}
\P(|P_n(x)| \le \delta^2) \le 3 (2N)^{-k}.
\end{equation}

Next, consider two cases:

{\bf Case 1: $1/\sqrt{2N+1} \le x \le 1-( \log n)^2/n$.} Since $x^{\ell}<\frac 1{2N+1}\le x^{\ell-1}$, it follows that $\ell\ge 3$. Thus,
\begin{align*} 
(2N)^{-k} &= (1/(2N+1))^{k  \log(2N)/\log(2N+1)} \\
&\le x^{(\ell -1)k  \log(2N)/\log(2N+1)}.
\end{align*}

Let $\gamma_N:=\frac{2\log(2N)}{3\log(2N+1)}$. As $\ell-1 \ge 2\ell/3$, we obtain
\begin{align*}
\P(|P_n(x)| \le n^{-2A})   &\le 3(x^{\ell(k+1)})^{\gamma_N k/(k+1)}\\
&\le 3(\delta^2)^{\gamma_N k /(k +1)}.
\end{align*}

Since $k$ is controlled below by some $c_N\log n$, we could make $k /(k +1)$ arbitrarily close to $1$  by taking $n$ large (independent of $\alpha$). Thus it suffices to show that
$$\gamma_N>\frac 12.$$
But it is clear that this holds for every $N\ge 2$. Indeed, consider the function defined on $(0,\infty)$
\begin{align*}
f(x) &=4\log(2x)-3 \log(2x+1)\\
f'(x)&=\frac{4}{x} - \frac{6}{2x+1} = \frac{2x+4}{x(2x+1)}>0,
\end{align*}
so for $x\ge 2$ we have $f(x)\ge f(2)>0$.

{\bf Case 2: $1/(N+1)< x < 1/\sqrt{2N+1}$.} It follows that $\ell=2$. Also,
\begin{align*}
(2N)^{-k} &= (1/(N+1))^{k \log(2N)/\log(N+1)}\\
&\le x^{k \log(2N)/\log(N+1)}.
\end{align*}

Let $\beta_N = \frac{\log(2N)}{2\log (N+1)}$, it follows from \eqref{eqn:uniform:key} that
$$\P(|P_n(x)| \le \delta^2) \le 3(x^{2(k +1)})^{\beta_N k /(k +1)}.$$

By choice of $k$, $x^{2(k+1)} < \delta^2 \le x^{2k}$, therefore
$$\P(|P_n(x)| \le \delta^2) \le  3(\delta^2)^{\beta_N k/(k+1)}.$$
As before, by choosing $n$ large (independent of $x$) we could ensure that $k /(k +1)$ is arbitrarily close to $1$. Therefore it suffices to show that
$$\beta_N > \frac  12,$$
which is clear for $N>1$.

\subsection{Roots behaviour in $1-\log^2n/n \le x \le 1-n^{-2+\ep}$ for the uniform case}\label{subsection:closex} Our treatment 
of this interval is more difficult as here the terms  $x^i$ are comparable.   Our main tool is a following theorem, which is an example 
of the recently developed machinery of Inverse Littlewood-Offord theorems (see  \cite{NgV-advances} for an introduction).

\begin{theorem}\label{theorem:inverse} Fix positive integers $C,C'$ and $0<\ep_0<1$, and assume that

$$\rho=\sup_{v\in \R}\P\left(|\sum_i \xi_i v_i-a| \le \beta\right) \ge n^{-C'},$$ 

for some real numbers $v_1,\dots,v_n$, where $\xi_i$ are iid random variables of uniform distribution with fixed parameter $N$. Then for  any number $n'$ between $n^{\ep_0}$ and $n$, with $n$ sufficiently large, there exists a proper symmetric generalized arithmetic progression $Q$, that is $Q=\{\sum_{i=1}^r x_ig_i : x_i\in \Z, |x_i|\le L_i \}\subset \R$, such that

\begin{enumerate}[(i)]

\item $Q$ is $C$-proper, i.e. the elements of the set $CQ=\{\sum_{i=1}^r k_ig_i : k_i\in \Z, |k_i|\le CL_i \}$ are all distinct.
\vskip .1in

\item $Q$ has small rank, $1\le r=O (1)$, and small cardinality
$$|Q| =O(\rho^{-1} \ell_0^{1-r}) =O(\rho^{-1}),$$

where the implied constants here depend on $C,C',N$ and $\ep_0$, and  $\ell_0=\sqrt{n'/\log^2 n}$.

\vskip .1in

\item For all but at most $n'$ elements $v$ of $\{v_1,\dots,v_n\}$, there exists $q\in Q$ such that 

$$|q-v|\le T_0\beta/\ell_0,$$ 

where $T_0=\Theta(1)$ independent of $C$.

\vskip .1in

\item The number $T_0 \beta/\ell_0 \in Q$, i.e., there exist $|k_1|\le L_1,\dots$, $|k_r|\le L_r$ such that 

$$T_0 \beta/\ell_0 =\sum_i k_i g_i.$$

\vskip .1in

\item All steps $g_i$ of $Q$ are {\it integral multiples} of $T_0 \beta/\ell_0$. 
\end{enumerate}
\end{theorem}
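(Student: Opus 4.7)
The plan is to follow the Fourier-analytic (Hal\'asz--Ess\'een) approach to inverse Littlewood--Offord theorems, in the form developed by Tao and Vu and refined by the second and third authors in \cite{NgV-advances}. The argument proceeds in four stages.

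First, I would apply Ess\'een's concentration inequality to obtain
\begin{equation*}
\rho \ll \beta \int_{|\theta| \le 1/\beta} \prod_{i=1}^n |\phi(\theta v_i)| \, d\theta,
\end{equation*}
where $\phi(\theta) = \frac{1}{N}\sum_{j=1}^N \cos(j\theta)$ is the characteristic function of $\xi_1$. Writing $\|x\|$ for the distance from $x$ to the nearest integer, a Taylor expansion gives the key pointwise estimate $|\phi(\theta)| \le \exp(-c\|N\theta/(2\pi)\|^2)$ with $c = c(N) > 0$, which converts the multiplicative integrand into a Gaussian-type sum $\exp(-c \sum_i \|N\theta v_i/(2\pi)\|^2)$.

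Second, the lower bound $\rho \ge n^{-C'}$ combined with this Fourier estimate forces the integrand to be large on a substantial set of $\theta$'s. A dyadic-pigeonhole decomposition of the level sets of the integrand produces a set $A$ of $\theta$'s with measure $\gtrsim \rho\, n^{-O(1)}$ such that for every $\theta \in A$, all but at most $n'$ indices $i$ satisfy $\|N\theta v_i/(2\pi)\| \ll \beta/\ell_0$, with $\ell_0 = \sqrt{n'/\log^2 n}$. The gain of the factor $\ell_0$ comes from passing to the $\ell_0$-fold self-convolution of the integrand and absorbing an entropy loss of order $\log^2 n$ into the pigeonhole.

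Third, a sumset argument based on Pl\"unnecke--Ruzsa converts the information on $A$ into the statement that the multi-set $\{v_i : i \in S\}$, for some $S \subset \{1, \dots, n\}$ with $|S| \ge n - n'$, has small doubling at scale $\beta/\ell_0$. Invoking the quantitative continuous Freiman--Ruzsa theorem then produces a symmetric generalized arithmetic progression $Q$ of bounded rank $r = O(1)$ and cardinality $O(\rho^{-1})$ whose $O(\beta/\ell_0)$-neighborhood covers all $v_i$ with $i \in S$, yielding (i)--(iii). Properness at the prescribed scale $C$ is then imposed via a standard progression-refinement step, absorbing the resulting multiplicative losses into the implicit constants.

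The remaining conclusions (iv) and (v), which pin down the base scale of $Q$ to equal $T_0 \beta/\ell_0$, follow by rounding each generator $g_i$ to the nearest integer multiple of $T_0\beta/\ell_0$: this rounding introduces an error of size $O(\beta/\ell_0)$ per generator, which is absorbed into the covering in (iii) without worsening the bound. The main obstacle in the plan will be executing the properness upgrade cleanly: ensuring that $Q$ is genuinely $C$-proper while keeping $r$ bounded by an absolute constant and $|Q|$ at $O(\rho^{-1})$ requires the quantitative Freiman-type machinery developed in \cite{NgV-advances}, since a naive appeal to Chang's theorem would inflate the rank and destroy the economy of the cardinality bound.
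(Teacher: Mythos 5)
Your plan re-proves an inverse Littlewood--Offord theorem from scratch, whereas the paper only \emph{deduces} Theorem \ref{theorem:inverse} from an intermediate estimate in the published proof of \cite[Theorem 2.9]{NgV-advances} (the bound on the Lebesgue measure of the $\ell_0$-fold sumset neighborhood of the rescaled set $\{\beta^{-1}v_i\}$), followed by a discretization of the points to the lattice $\frac{1}{D\ell_0}\Z$, the John-type theorem for \emph{iterated} sumsets \cite[Theorem 1.21]{TVJohn}, a properness upgrade via \cite[Theorem 3.40]{TVbook}, and the division lemma \cite[Lemma A.2]{NgV-advances}. The difference of route would be acceptable by itself, but two steps of your sketch would fail as written. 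First, obtaining (iv)--(v) by rounding each generator $g_i$ to the nearest multiple of $T_0\beta/\ell_0$ does not cost $O(\beta/\ell_0)$ per element of $Q$: an element $\sum_i x_i g_i$ moves by up to $\sum_i |x_i|\, T_0\beta/\ell_0 \le \big(\sum_i L_i\big) T_0\beta/\ell_0$, and since $|Q|$ may be as large as $\rho^{-1}\le n^{C'}$ this is $n^{O(1)}\beta/\ell_0$, not $O(\beta/\ell_0)$; rounding the generators can also destroy $C$-properness. This matters downstream: the proof of Lemma \ref{lemma:crucial:1/2} chooses $\alpha$ and $C$ as explicit functions of $T_0$ and $A$, so it needs the covering radius to be $T_0\beta/\ell_0$ with $T_0=\Theta(1)$ independent of $C$. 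The paper avoids this by reversing the order of operations: it rounds the \emph{points} $\beta^{-1}v_i$ (a per-point error of $\beta/(D\ell_0)$), adjoins the unit cube, and runs the entire structural argument inside $\Z$, so that $Q=\frac{\beta}{D\ell_0}\cdot P$ with $P\subset \Z$ satisfies (iv) and (v) automatically and inherits properness under dilation.

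Second, your third stage converts the Fourier information into ``small doubling at scale $\beta/\ell_0$'' and invokes Pl\"unnecke--Ruzsa plus a continuous Freiman--Ruzsa theorem. The doubling constant here is not bounded; it can be a power of $n$, and Freiman-type bounds with doubling constant $K$ incur rank and cardinality losses growing with $K$, so this cannot produce rank $r=O(1)$ with $|Q|=O(\rho^{-1})$, let alone the sharper bound $|Q|=O(\rho^{-1}\ell_0^{1-r})$ which is part of conclusion (ii). What makes the statement true is precisely the \emph{long-range} bound on the $\ell_0$-fold sumset with $\ell_0=\sqrt{n'/\log^2 n}\to\infty$; that is the input to \cite[Theorem 1.21]{TVJohn}, and the factor $\ell_0^{1-r}$ is then extracted by the division step \cite[Lemma A.2]{NgV-advances}. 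Your sketch uses the $\ell_0$-fold convolution only to gain the scale $\beta/\ell_0$ and then discards the iterated-sumset information exactly where it is needed. (A smaller slip: the pointwise bound $|\phi(\theta)|\le \exp(-c\|N\theta/(2\pi)\|^2)$ fails in the Bernoulli case $N=1$ at $\theta=\pi$, where $|\phi(\pi)|=1$; one must measure the distance of the relevant phases to $\pi\Z$, or symmetrize, as in the standard arguments.)
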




We will provide  the deduction of Theorem \ref{theorem:inverse} from \cite[Theorem 2.9]{NgV-advances} in Appendix \ref{appendix:inverse}.
 


It follows from the $C$-properness (i) of $Q$ that for any $t\in \Z, 0<t \le C$, the equation 
\begin{equation}\label{eqn:GAP:special}
k_1(t)g_1+\dots+k_r(t)g_r = t (T_0 \beta/\ell_0), k_i(t)\in \Z, |k_i(t)|\le C L_i
\end{equation}
has a unique solution $(k_1(t),\dots,k_d(t))= t \cdot (k_1,\dots,k_d)$.

Now fix $x\in (1-\log^2n/n, 1)$. Choose the largest $n_0$ so that $x^{n_0} \ge 1/10$, thus 
$$n_0 \ge n/\log^2 n.$$ 

In the sequel, set $\ep_0:=\ep/2$ and 
$$n':=n^{\ep_0} \mbox{ and } \beta:=\alpha (1-x)^A \ell_0,$$ 
 
 where $\alpha$ to be chosen sufficiently small depending on $A$.



We will prove the following crucial bound.

\begin{lemma}\label{lemma:crucial:1/2}  We have 
$$\rho=\sup_{r\in \R} \P\Big(|\sum_{i=0}^{n_0} \xi_i x^i -r|\le \beta \Big) =O( (n^{1-\ep_0}/\log^2 n)^{-A})  .$$

\end{lemma}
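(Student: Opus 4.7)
The plan is to apply the Inverse Littlewood--Offord machinery (Theorem~\ref{theorem:inverse}) to the deterministic sequence $v_i = x^i$ for $0 \le i \le n_0$, arguing by contradiction. Suppose $\rho > \rho_0 := (n^{1-\ep_0}/\log^2 n)^{-A}$; then $\rho > n^{-C'}$ for a suitable constant $C' = C'(A,\ep_0)$, so the theorem produces a $C$-proper GAP $Q$ of rank $r = O(1)$ with $|Q| = O(\rho^{-1}\ell_0^{1-r})$, together with an approximation $|q_i - x^i| \le T_0\beta/\ell_0 = T_0\alpha(1-x)^A$ for all but at most $n'= n^{\ep_0}$ indices. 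Moreover all step sizes of $Q$ are integer multiples of $T_0\beta/\ell_0$, and this quantum itself lies in $Q$.

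The first key step uses the separation of $(x^i)$: since $x^{n_0} \ge 1/10$, consecutive values differ by $(1-x)x^i \ge (1-x)/10$, which dwarfs the approximation radius $T_0\alpha(1-x)^A$ whenever $A \ge 2$ and $\alpha$ is small. Hence distinct approximated $x^i$'s correspond to distinct GAP elements, and $|Q| \ge n_0 - n' \ge n/(2\log^2 n)$. Combined with the upper bound, this gives the preliminary estimate $\rho^{-1}\ell_0^{1-r} \gtrsim n/\log^2 n$, which at rank $r = 1$ falls short of the target by roughly a factor of $M^{A-1}$, where $M := n^{1-\ep_0}/\log^2 n$.

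The second, more delicate step exploits the multiplicative (geometric) nature of $(x^i)$, which is invisible to mere separation counting. The $A$-th forward difference
\[
(x-1)^A x^i = \sum_{j=0}^A (-1)^{A-j}\binom{A}{j} x^{i+j}
\]
has magnitude of order $(1-x)^A$, matching the approximation scale $T_0\alpha(1-x)^A$. For indices $i$ with $x^i, \ldots, x^{i+A}$ all approximated (at least $n_0 - (A{+}1)n'$ of them), the right-hand side lies within $2^A T_0\alpha(1-x)^A$ of an element of $A\cdot Q$, which by $C$-properness with $C \ge 2^A$ is itself a structured subset of $(T_0\beta/\ell_0)\cdot \mathbb{Z}$. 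Consequently $(1-x)^A x^i$ is trapped in only $O_{A,\alpha}(1)$ quantization bins, even though $x^i$ ranges over an interval of length comparable to $1$. Re-running the separation-and-counting argument of the first step on this $A$-th differenced sequence, and iterating $A$ rounds in total, boosts the lower bound on $\rho^{-1}$ by one factor of $M$ per round, yielding $\rho^{-1} \gtrsim M^A$, which contradicts the assumption.

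The main obstacle will be executing this iterative amplification cleanly: one must propagate both the $C$-properness and the approximation-error budget across $A$ rounds. The exceptional-set size is manageable (each round loses at most $(A{+}1)n' = o(n/\log^2 n)$ indices since $A$ is fixed), but carefully tracking how the GAP structure and quantization interact across rounds—to extract precisely an $A$-th power rather than a constant-factor improvement—will dominate the technical work and requires choosing the properness parameter $C$ sufficiently large in terms of $A$.
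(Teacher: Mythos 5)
Your setup matches the paper's: argue by contradiction, apply Theorem~\ref{theorem:inverse} to the points $x^i$, and exploit the fact that the $A$-th finite difference $\sum_{k=0}^A(-1)^{A-k}\binom{A}{k}x^{i-k}=x^{i-A}(1-x)^A$ lives exactly at the quantization scale $\beta/\ell_0=\alpha(1-x)^A$, with $C$-properness and the steps of $Q$ being integral multiples of $T_0\beta/\ell_0$. But the heart of the proof --- how this forces $|Q|\gtrsim (n^{1-\ep_0}/\log^2 n)^A$ --- is missing, and the amplification you sketch does not work as described. Your observation that $(1-x)^Ax^i$ is ``trapped in $O(1)$ quantization bins'' is vacuous: those numbers all lie in an interval of length $\le (1-x)^A$, so this is true for any reals, with or without GAP structure. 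More fundamentally, any argument that counts distinct GAP elements (or bins) hit by the (differenced) sequence can never produce more than the number of indices, which is $O(n/\log^2 n)$; since the target is $M^A=(n^{1-\ep_0}/\log^2 n)^A\gg n$ for large $A$, no iteration of ``separation-and-counting'' of this kind can reach it, and the claimed gain of a factor $M$ per round has no identified mechanism (note also that the intermediate differences $(1-x)^jx^i$, $0<j<A$, live at scale $(1-x)^j\gg\beta/\ell_0$, so the trapping heuristic does not even apply to them).

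What the paper actually does at this point is pass from the real numbers to the \emph{integer coordinates} of the approximants: writing $q_j=\sum_d c_{jd}g_d$ and using $C$-properness together with the fact that every $g_d$ is an integral multiple of $T_0\beta/\ell_0$ and that $T_0\beta/\ell_0=\sum_d k_dg_d$ itself lies in $Q$, the two-sided bound \eqref{eqn:hard:1} forces the exact relation $\sum_{k=0}^A(-1)^{A-k}\binom{A}{k}c_{i-k,d}\in\{k_d,2k_d,\dots,Ck_d\}$ for every $i$ in a long run of consecutive good indices (of length $m\approx n^{1-\ep_0}/\log^2 n$, obtained by pigeonhole from the exceptional-set bound). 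With $k_1\ge 1$, one has an integer sequence whose $A$-th finite difference is at least $1$ on that run, and the elementary Claim~\ref{claim:increasing} then shows its range must be $\gtrsim m^A$; hence $L_1\gtrsim m^A$, so $|Q|\gtrsim (n^{1-\ep_0}/\log^2 n)^A$, contradicting $|Q|=O(\rho^{-1})$ once the constant $C_1$ in the contradiction hypothesis is large. It is this integrality/coefficient-growth step --- the GAP being forced to be vastly larger than the set of points it approximates --- that your proposal lacks; the iteration you anticipate is, in the paper, entirely internal to the discrete lemma about integer sequences, not a re-run of the counting argument on differenced real sequences.
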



\begin{proof}[Proof of Lemma \ref{lemma:crucial:1/2}] Without loss of generality assume $A$ is an integer. Assume otherwise that 
\begin{equation}\label{eqn:rho:large}
\rho \ge C_1 (n^{1-\ep_0}/\log^2 n)^{-A}
\end{equation}

for some sufficiently large constant $C_1$ to be chosen depending on all other parameters.

Then by Theorem \ref{theorem:inverse}, all but $n^{\ep_0}$ of the elements are $T_0\beta/\ell_0$-close to a proper GAP $Q$ of rank $r=O(1)$ and size $O(\rho^{-1})$. Furthermore, as noticed, the generators of $Q$ can be chosen to be integral multiples of $T_0\beta/\ell_0$.

 Let $I$ be the collection of indices $i\le n_0$ where $x^i$ can be well-approximated by the elements of $Q$ as stated in Theorem \ref{theorem:inverse}. Then as $|I|\ge n_0-n^{\ep_0}$, $I$ contains a discrete interval of length $ \lfloor n^{1-\ep_0}/\log^2n \rfloor - 1$, which we denote by $I_0=\{i_0,\dots,i_0-   \lfloor n^{1-\ep_0}/\log^2 n \rfloor + 2\}$. (Note that the symbol $I_0$ was used for a different interval previously, which should not be confused with the current setting.)
 
 For any $i$ from $I_0$ such that $i-A\in I_0$, consider the sequence $x^i,\dots, x^{i-A}$, together with their approximations $q_i,\dots, q_{i-A}$ from $Q$.  By the choice of $\beta$,
$$\frac{\alpha^{-1} \beta}{10 \ell_0} \le  \sum_{k=0}^{A} (-1)^{ A - k} \binom{A}{k}x^{i-k} = x^{i-A} (1-x)^{A} \le  \frac{\alpha^{-1}  \beta}{\ell_0}.$$

As such, by (ii)
\begin{equation}\label{eqn:hard:1}
\frac{\alpha^{-1} \beta }{10\ell_0}- 2^A T_0\beta/\ell_0 \le \sum_{k=0}^{A} (-1)^{ A - k} \binom{A}{k}q_{i-k} \le  \frac{\alpha^{-1} \beta }{\ell_0} + 2^A T_0 \beta/\ell_0.
\end{equation}

With the choice $\alpha=2^{-A-5}T_0$, one guarantees that $\alpha^{-1}> 10 \times 2^A T_0$, and thus the LHS of \eqref{eqn:hard:1} is strictly positive. After choosing $\alpha$, we choose $C=2^{A+10}T_0$ in Theorem \ref{theorem:inverse} so that $C> (\alpha^{-1} +2^A)T_0$, the constant in the RHS of \eqref{eqn:hard:1}.


Next, assume that $q_j= c_{j1}g_1+\dots + c_{jd} g_d$ for $|c_{j1}| \le L_1,\dots, |c_{jd}|\le L_d$. Then it follows from the choice of $C$ and from \eqref{eqn:hard:1} that 
$$0< \big(\sum_{k=0}^{A} (-1)^{A-k}\binom{A}{k}c_{i-k,1}\big) g_1 + \dots + \big(\sum_{k=0}^{A} (-1)^{A- k}\binom{A+1}{k}c_{i-k,d}\big) g_d  <C \beta/\ell_0.$$

Consequently, recalling that all the generators $g_i$ are integral multiple of $T_0\beta/\ell_0$, there exists $0<t\le C, t\in \Z$ such that 
$$ \big(\sum_{k=0}^{A} (-1)^{A-k}\binom{A}{k}c_{i-k,1}\big) g_1 + \dots + \big(\sum_{k=0}^{A} (-1)^{ A-k}\binom{A+1}{k}c_{i-k,d}\big) g_d = t \beta/\ell_0.$$

It thus follows from \eqref{eqn:GAP:special} that 
$$\sum_{k=0}^{A} (-1)^{A-k}\binom{A}{k}c_{i-k,1}= tk_1 \wedge \dots \wedge \sum_{k=0}^{A} (-1)^{A-k}\binom{A}{k}c_{i-k,d} =tk_d.$$

In summary, we obtain the following key property for all $i\in I_0$ and $i\ge A$,
$$\sum_{k=0}^{A} (-1)^{A-k} \binom{A}{k}c_{i-k,1} \in \{k_1,\dots, Ck_1 \} \wedge \dots \wedge \sum_{k=0}^{A} (-1)^{A-k} \binom{A}{k}c_{i-k,d}\in \{k_d,\dots, Ck_d\}.$$

As $k_1,\dots,k_d$ cannot be all zero, without loss of generality, assume that $k_1>0$. Thus for every $i\in I_0$ such that $i-A \in I_0$ we have
\begin{equation}\label{eqn:hard:2}
1\le  k_1 \le \sum_{k=0}^{A} (-1)^{A-k}\binom{A}{k}c_{i-k,1}  \le C k_1.
\end{equation}
 
 We next require the following observation. 
 

\begin{claim}\label{claim:increasing}
Assume that  $\{x_i\}_{i=0}^m$ is a sequence of real numbers which satisfy the following inequality for all $A\le i\le m$
$$
1\le \sum_{k=0}^{A} (-1)^k \binom{A}{k}x_{i-k}.
$$

Then there exist $0\le i,j\le m$ such that 
$$|x_i-x_j|\ge C_A m^{A},$$
where $C_A>0$ depends on $A$. 
\end{claim}

\begin{proof} (Proof of Claim \ref{claim:increasing})
Define
$$\Delta^{0} (x_i) := x_i  \mbox{ and } \Delta^{k} (x_i) := \Delta^{k-1} (x_{i-1}) - \Delta^{k-1} (x_{i} ).$$ 

By the assumption and  Pascal's triangle identity
$$1\le  \sum_{j=0}^A  (-1) ^j { A \choose j} x_{i-j}=\Delta ^A (x_i), \forall {A\le i\le m}.$$

It follows  that $\Delta^{A-1} (x_{i-1}) \ge \Delta ^{A-1} (x_{i} ) + 1$. Thus, there are at least $(m-A)/4$ consecutive indices  $i\ge A$ such that the corresponding 
$\Delta ^{k-1} (x_i)$ have the same signs and absolute value at least $(m-A)/4$.  Without loss of generality, we can assume that all of them are at least $(m-A)/4$. Repeat the argument with $A-1$ and the above subsequence. After $A$ repetitions, we obtain a sub interval $I$ of length $(m-A)/4^A$ of  $[A,m]$, where all $x_i, i \in I$ have the same signs and absolute value at least $((m-A)/4)^A$.  \end{proof}

Applying Claim \ref{claim:increasing} with $m=\lfloor n^{1-\ep_0}/\log^2 n\rfloor -2$ and $\{x_i\}_{i=0}^m:= \{c_{i_0-i,1}\}_{i=0}^m$, we obtain $L_1\ge C_A m^{A}$, and so
$$|Q| \ge C_A(n^{1-\ep_0}/\log^2n)^A .$$

This contradicts with the bound $|Q| =O(\rho^{-1})$ from Theorem \ref{theorem:inverse} and with the bound $\rho\ge C_1(n^{1-\ep_0}/\log^2 n)^{-A}
$ from \eqref{eqn:rho:large} because $C_1$ is sufficiently large. This completes the proof of Lemma \ref{lemma:crucial:1/2}.
\end{proof}

Now we conclude the subsection by proving Theorem \ref{theorem:repulsion:general} for the interval $(1-\log^2n/n,n^{-2+\ep}]$. 


\begin{proof}[Proof of Lemma~\ref{l.almost-zero}  for $1-\log^2n/n  <x \le 1- n^{-2+\ep}$] We need to show the existence of $c>0$ (that depends only on $\ep$ and $N$) such that for every $A>0$ sufficiently large
$$P(|P_n(x)| \le \delta^2) = O(\delta^{1+c}).$$


We will apply Lemma~\ref{lemma:crucial:1/2} with $\ep_0= \ep/2$, and let $\alpha$ be the corresponding constant. By making $c$ smaller if necessary, it suffices to prove Lemma~\ref{l.almost-zero} for
$$\delta= \sqrt{\alpha} n^{-A(1-\ep/2) +\ep_0/4}$$
where $A$ is sufficiently large (instead of requiring $\delta=n^{-A}$ as before.) 

As $x \in (1-\log^2n/n,1- n^{-2+\ep}]$,  one can verify that 
$$\alpha (1-x)^A \ell_0  \le \alpha (1-x)^A \sqrt{n'} \le \alpha n^{-A(2-\ep)+\ep/2} = \delta^2.$$ 

Thus, by Lemma \ref{lemma:crucial:1/2},
\begin{align*}
\P\Big(|P(x)| \le \delta^2 \Big) \le \sup_{r\in \R} \P\Big(|\sum_{i=0}^{n_0} \xi_i x^i -r|\le \alpha (1-x)^A \ell_0 \Big) &=O\big( (n^{1-\ep_0}/\log^2 n)^{-A}\big)\\
&= O(n^{-A(1-3\ep/8)})\\
&=O(\delta^{1+c}) \ \ ,
\end{align*}
for sufficiently small $c$, provided that   $A$ is sufficiently large.  
\end{proof}


\section{Proof of Theorem \ref{theorem:close:Ber}: no double root at the edge} \label{section:repulsion:edge} Set $t:=1-x$. Then $0\le t \le n^{-2+\ep}$. For every $i=0,1,\dots, n$, write
$$x^i=(1-t)^i = \sum_{0\le k\le n} (-1)^k \binom{i}{k}t^k,$$
where $\binom{i}{k}=0$ if $i<k$. Consequently
$$P_n(x) = \sum_{k=0}^n (-1)^k  \Big(\sum_i \binom{i}{k} \xi_i \Big) t^k$$
$$P'_n(x) = \sum_{k=1}^n (-1)^{k-1}  \Big(\sum_i i \binom{i-1}{k-1} \xi_i \Big) t^{k-1} = \sum_{k=1}^n (-1)^{k-1}  k\Big(\sum_i i \binom{i}{k} \xi_i \Big) t^{k-1} $$
 
Notice easily that, with probability $1-\exp(-\Omega(\log^2 n))$, 
\begin{equation}\label{eqn:xlarge:expansion:1,2}
|\sum_i \binom{i}{k} \xi_i | = n^{k+1/2} \log^{O(1)} n,  \forall 1\le k\le n.
\end{equation}


Let $\CE$ denote this event, on which we will condition for the rest of our argument. Thus
\begin{equation}\label{eqn:xlarge:error}
\sum_{k\ge k_0} (-1)^k (\sum_{0\le i\le n} \binom{i}{k} \xi_i) t^k  = (n t)^{k_0} n^{1/2} \log^{O(1)}n.
\end{equation}

In particular, with $k_0=1$, 
$$\sum_{k\ge 1} (-1)^k (\sum_{0\le i\le n} \binom{i}{k} \xi_i) t^k  = O(n^{-1/2+\ep} \log^{O(1)}n), $$
therefore $\sum_i \xi_i = P_n(x)+o(1)$. But as $\xi_i$ takes integer values and $|P_n(x)| = o(1)$, therefore by taking $n$ large we must have
\begin{equation}\label{eqn:near1:0'}
\sum_{i=0}^n \xi_i=0.
\end{equation}


After replacing \eqref{eqn:near1:0'} into $|P_n(x)|$, we obtain
\begin{equation}\label{eqn:near1:1}
t |\sum_{k\ge 1} (-1)^k (\sum_{0\le i\le n} \binom{i}{k} \xi_i) t^{k-1}| = |P_n(x)| = O(n^{-B}).
\end{equation}

Now we consider the assumption that $|P'_n(x)|\le n^{-B}$, from which we infer that
\begin{equation}\label{eqn:near1:2}
 |P'_n(x)| =| \sum_{k\ge 1} (-1)^k k(\sum_{0\le i\le n} \binom{i}{k} \xi_i) t^{k-1}|= 
\end{equation}
$$=|-\sum_{0\le i\le n} i \xi_i +  \sum_{k\ge 2} (-1)^k k(\sum_{0\le i\le n} \binom{i}{k} \xi_i) t^{k-1}| = O(n^{-B}).$$
We next consider two cases.

{\bf Case 1. $t\le n^{-8}$}. It follows easily from \eqref{eqn:xlarge:expansion:1,2} and \eqref{eqn:near1:2} that 
$$\sum_{0\le i\le n} i \xi_i =  o(1) \ \ .$$

{\bf Case 2. $t\ge n^{-8}$}. As $B\ge 16$, it follows from \eqref{eqn:near1:1} that 
\begin{equation}\label{eqn:near1:3}
|\sum_{k\ge 1} (-1)^k (\sum_{0\le i\le n} \binom{i}{k} \xi_i) t^{k-1}| = | -\sum_{0\le i\le n} i \xi_i + \sum_{k\ge 2} (-1)^k (\sum_{0\le i\le n} \binom{i}{k} \xi_i) t^{k-1}| \le n^{-B/2}.
\end{equation}

Combining \eqref{eqn:near1:2} and \eqref{eqn:near1:3} together to eliminate the term corresponding to $k=2$, and also by \eqref{eqn:xlarge:expansion:1,2}
\begin{equation}\label{eqn:near1:3}
\sum_{0\le i \le n} \binom{i}{1}\xi_i +O(n^{7/2}\log^{O(1)}n) t^2= O(n^{-B/2}).
\end{equation}

Thus, using the fact that $t\le n^{-2+\epsilon}\le n^{-15/8}$ with $\ep\le 1/8$, it follows that 
$$|\sum_i i \xi_i|=o(1).$$

As $\xi_i$ takes integer values, it follows from both cases that 
\begin{equation}\label{eqn:near1:4}
\sum_i i\xi_i =0.
\end{equation}


\section{Application: proof of Theorem \ref{theorem:expectedvalue:2}}\label{section:expectedvalue:2}

 
Since $\xi$ has no atom at $0$, $P_n(0) \ne 0$ with probability $1$.  First of all, the contribution towards the expectation at  the points $\pm 1$ are negligible owing to the following elementary estimates.

\begin{claim}\label{l.endpoint}We have
\begin{align*}
&\P\Big(P_n(1)P_n(-1)=0\Big) = O(1/\sqrt n)\\
&\P\Big(P_n(1)=0 \wedge P_n'(1)=0\Big) = O(1/n^2)\\
&\P\Big(P_n(-1)=0 \wedge P'_n(-1)=0\Big)  = O(1/n^2).
\end{align*}
\end{claim}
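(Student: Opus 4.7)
The plan is to handle the two distribution types separately. For Type II, all three probabilities are in fact zero: since $\xi$ admits a density, each of $P_n(\pm 1)$, viewed as a function of a single coefficient with the others frozen, is absolutely continuous and has no atom at $0$, and similarly for $P_n'(\pm 1)$. Hence all three bounds are trivial. So we focus on Type I throughout.

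For Type I we need genuine anti-concentration estimates. The first estimate is one-dimensional: both $P_n(1) = \sum_i \xi_i$ and $P_n(-1) = \sum_i (-1)^i \xi_i$ are sums of $n+1$ iid copies of a fixed non-degenerate integer-valued random variable (using that $(-1)^i \xi_i$ is equidistributed with $\xi_i$). The classical Erd\H{o}s--Littlewood--Offord / Kolmogorov--Rogozin concentration inequality gives $\sup_{x\in\Z}\P(P_n(\pm 1) = x) = O(n^{-1/2})$, and a union bound yields the first estimate.

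The second and third estimates are symmetric under $x \leftrightarrow -x$, so it suffices to show $\P(P_n(1) = 0,\, P_n'(1) = 0) = O(n^{-2})$, a two-dimensional small-ball bound for the integer-valued random vector $\sum_{i=0}^n \xi_i(1, i)$. The plan is to prove this via Fourier inversion:
$$\P(P_n(1) = 0, P_n'(1) = 0) \le \frac{1}{(2\pi)^2}\int_{[-\pi,\pi]^2} \prod_{i=0}^n |\phi(\alpha + i\beta)|\, d\alpha\, d\beta,$$
with $\phi(t) = N^{-1}\sum_{j=1}^N \cos(jt)$ the characteristic function of $\xi$. Using the elementary bound $|\phi(t)| \le \exp(-c\|t\|^2)$, where $\|t\|$ denotes the distance from $t$ to $2\pi\Z$, this reduces to the integral estimate $\int\!\!\int \exp(-c \sum_i \|\alpha + i\beta\|^2)\, d\alpha\, d\beta = O(n^{-2})$.

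The only real obstacle is this last integral. The plan is a level-set argument: we will show $\mathrm{meas}\{(\alpha, \beta) \in [-\pi, \pi]^2 : \sum_i \|\alpha + i\beta\|^2 \le K\} = O(\min(1, K^2/n^2))$. The key point is that when the sum is bounded, many consecutive terms $\|\alpha + i\beta\|$ and $\|\alpha + (i+1)\beta\|$ must be simultaneously small, which forces $\|\beta\|$ (via differences) and then $\|\alpha\|$ to each be of order $\sqrt{K}/n$, producing the quadratic measure bound. Integrating $e^{-cK}$ against $d(K^2/n^2)$ yields the desired $O(n^{-2})$. This matches the heuristic that $(P_n(1), P_n'(1))$ behaves like a two-dimensional discrete Gaussian whose covariance has determinant $\Theta(n^4)$, so the point mass at $(0,0)$ scales as $\Theta(n^{-2})$; the tightness of the bound is consistent with the exact order recorded in Fact~\ref{fact:arithmetic}.
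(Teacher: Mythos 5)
Your handling of Type II and of the first estimate coincides with the paper's (the paper dismisses Type II as trivial and quotes the classical Erd\H{o}s--Littlewood--Offord bound for $\P(P_n(\pm1)=0)$). Where you genuinely diverge is the two-dimensional step: the paper never computes a Fourier integral for the upper bound. It proves the stronger Claim~\ref{claim:anticoncentration} by contradiction using the inverse Littlewood--Offord theorem of Nguyen--Vu: if $\rho=\sup_a\P\big(\sum_i\xi_i(1,i)=a\big)\ge C_1n^{-2}$, then most of the $n+1$ distinct vectors $(1,i)$ would lie in a symmetric GAP of rank $r\ge2$ and cardinality $O(\rho^{-1}n^{-r/2})=O(C_1^{-1}n)$, which is impossible for $C_1$ large. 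That soft argument needs only variance one and a bounded $(2+\ep)$-moment, with no integrality or symmetry; your Esseen-type computation instead exploits the lattice structure and symmetry of Type I, and is in fact the same machinery the paper uses in Appendix~\ref{appendix:1} for the matching \emph{lower} bound (Lemma~\ref{lemma:S1}). So your route is more elementary and self-contained, the paper's is more general.

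There is, however, a step that fails as written: for $N=1$ (Bernoulli, explicitly included in Type I) your ``elementary bound'' $|\phi(t)|\le\exp\big(-c\,d(t,2\pi\Z)^2\big)$ is false, since $\phi(t)=\cos t$ has $|\phi(\pi)|=1$ while $d(\pi,2\pi\Z)=\pi$. The correct bound uses the distance to $\pi\Z$ (equivalently the paper's $\|2wx/p\|$ device in the Appendix), and then your level-set analysis must include four major arcs, $(\alpha,\beta)$ near each point of $\{0,\pi\}^2$ modulo $2\pi$, not just the arc at the origin; each contributes the same order and the union still gives $O(n^{-2})$ (this extra arithmetic structure is exactly why the probability in Fact~\ref{fact:arithmetic} vanishes unless $4\mid n+1$). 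For $N\ge2$ your bound is fine, since $|\phi(t)|=1$ only at $t\in2\pi\Z$. Two smaller repairs: the level set $\{\sum_i\|\alpha+i\beta\|^2\le K\}$ near a major arc is essentially an ellipse with semi-axes $\asymp\sqrt{K/n}$ and $\asymp\sqrt{K}/n^{3/2}$, so its measure is $\Theta(K/n^2)$ --- your stated $O(\min(1,K^2/n^2))$ is not correct uniformly for small $K$, and ``$\|\alpha\|,\|\beta\|$ each of order $\sqrt K/n$'' is off, though any bound $O((1+K)^{O(1)}/n^2)$ integrates against $e^{-cK}$ to the desired $O(n^{-2})$; and the pigeonhole step should explicitly rule out $\beta$ near $2\pi p/q$ with $2\le q=O(1)$, where at most a $1/q$ fraction of the terms can be small and the sum is then $\gg n$. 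With these corrections the argument goes through.
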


Note that the above claim is trivial in the Type II setting, so it remains to show these estimates for Type I. The first estimate clearly follows from the classical Erd\H{o}s-Littlewood-Offord bound. We refer the reader to Claim \ref{claim:anticoncentration} for a short proof of the remaining two estimates.


Using Claim~\ref{l.endpoint}, it follows that
\begin{align*}
\E N_{n,\xi} \{-1,1\}  &= O(1/\sqrt n) + n O(1/n^2) \\ 
&= O(1/\sqrt n) \ \ .
\end{align*}

Now, using the fact that the (real) zero sets of $P_n(x)$ and $x^n P_n(1/x)$ have the same distribution, it follows that 
\begin{align*}
\E N_{n,\xi} &= 2 \E N_{n,\xi}(-1,1) + \E N_{n,\xi} \{-1,1\} \\
&= 2 \E N_{n,\xi} (-1,1) + O(1/\sqrt n) \ \ .
\end{align*}



In the following, we will consider the number of real roots in $(0,1)$, and show that
\begin{equation}\label{eqn:(0,1)}
\E N_{n,\xi}(0,1) = \frac 1 {2\pi} \log n + C + o(1)
\end{equation}
for some $C=C(\xi)$. 

To estimate $\E N_{n,\xi}(-1,0)$, we consider the random polynomial
$$\widetilde P_n(x) := P_n(-x) = \xi_0 -\xi_1 x+ \xi_2x^2 +\dots + (-1)^n \xi_n x^n$$ 
and let $\widetilde N_{n,\xi}$ denote its number of real zeros. By definition, we have
$$\E N_{n,\xi}(-1,0) = \E \widetilde N_{n,\xi}(0,1)$$
so we need to estimate the average number of real zeros for $\widetilde P_n$ in $(0,1)$. 

Now, Type I distributions are symmetric so in that setting the zero sets of $\widetilde P_n$ and $P_n$ would have the same distribution, therefore $\E \widetilde N_{n,\xi} = \E N_{n,\xi}$ and one obtains the same estimate as above for $\E N_{n,\xi}(-1,0)$. 

For Type II distributions, the argument we use below for estimating $\E N_{n,\xi}(0,1)$ could be applied to estimate $\E \widetilde N_{n,\xi}(0,1)$. Most importantly,  our result on non-existence of double roots (Theorem \ref{theorem:repulsion:general'}) does not require the coefficients $\xi_0,\dots, \xi_n$ to have identical distributions, and could be applied to $\widetilde P_n$. Also the cited ingredients that we used below (Theorem~\ref{theorem:TV} and Lemma~\ref{lemma:Jensen}) do not require $\xi_0,\dots, \xi_n$ to have identical distributions.  With cosmetic changes, one could use the argument below to $\widetilde P_n$ and obtains a similar estimate for $\E \widetilde N_{n,\xi}(0,1)$, which is the same as $\E N_{n,\xi}(-1,0)$, and conclude the proof of Theorem~\ref{theorem:expectedvalue:2}. For the rest of the section we will be focusing on \eqref{eqn:(0,1)}.

\subsection{Comparison lemmas} 

Our first tool is the following result from \cite{TVpolynomial}. 

\begin{theorem}  \label{theorem:TV} There is a positive constant $\alpha$ such that the following holds. 
Let $\ep >0$ be an arbitrary small constant and  $\xi_0,\dots,\xi_n$ be independent random variables with mean 0, variance 1 and uniformly bounded $(2+\ep)$-moment. 
There is a constant $C_1= C_1(\ep) $ such that for any  $n \ge  C_1$ and interval $I := (1-r, a) \subset (1- n^{-\ep} , 1]$
\begin{equation}  \label{small2} 
\E N_{n, \xi_0,\dots,\xi_n} I   = \E N_{n, N(0,1) } I   + O(r ^{\alpha} ),
\end{equation} 

where the implicit constant in $O(.)$ depends only on $\alpha$ and $\ep$.
\end{theorem}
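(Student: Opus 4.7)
The plan is to prove this universality statement via a Lindeberg-type swapping argument combined with the Kac--Rice formula, treating the general polynomial as a perturbation of its Gaussian counterpart. Using a smoothed Kac--Rice identity
\[
\E N_n(I) = \lim_{\eta \to 0^+} \int_I \E\bigl[\delta_\eta(P_n(t))\,|P_n'(t)|\bigr]\,dt,
\]
where $\delta_\eta$ is a nonnegative bump of mass $1$ and scale $\eta$, the task reduces to comparing the two integrands pointwise in $t \in I$ and controlling the error introduced by the smoothing.

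Since $I \subset (1 - n^{-\eps}, 1]$, the natural substitution is $x = 1 - s/n$ with $s \in [0, rn]$. Under this rescaling, $P_n(1 - s/n) = \sum_i \xi_i (1 - s/n)^i$ is a linear combination whose weights decay like $e^{-is/n}$, so effectively $\Theta(n/s)$ coefficients contribute nontrivially and a multivariate CLT suggests that $(P_n(t), P_n'(t))$ after normalization converges to a fixed bivariate Gaussian depending only on $s$. To make this effective, I would Lindeberg-swap the $\xi_i$ one at a time for matched Gaussians: since the first two moments agree, the first two orders in a Taylor expansion of any smooth test function $F(P_n(t), P_n'(t))$ cancel, and the cubic remainder is controlled using the uniform $(2+\eps)$-moment, producing a per-swap error whose sum over $i$ contributes a polynomial gain $n^{-c(\eps)}$ provided the derivatives of $F$ are polynomially bounded.

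Applying the swap to the integrand $\delta_\eta(u)|v|$ requires first truncating $|v|$ at a polynomial scale (a high-probability event from standard tail bounds on $P_n'$), so the integrand becomes a smooth function whose derivatives are polynomial in $\eta^{-1}$. The resulting Lindeberg error then has the form $\eta^{-A} n^{-c(\eps)}$, while the smoothing error in passing from $\delta_\eta$ back to $\delta_0$ is controlled by an anticoncentration bound $\P(|P_n(t)| \le \eta) = O(\eta^{\kappa})$ available from inverse Littlewood--Offord methods (Lemma~\ref{l.almost-zero}). Optimizing $\eta$ to balance these two errors and integrating over $t \in I$ --- whose length is $r \le n^{-\eps}$ --- yields the advertised bound $O(r^\alpha)$ with $\alpha$ depending only on $\eps$ and $\kappa$.

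The main obstacle will be the extreme edge $t$ very close to $1$, where the effective number of contributing coefficients shrinks, the CLT quality degrades, and the small-ball estimates weaken. I would handle this region separately by arguing that for any $\delta > 0$ the expected number of roots in $(1-\delta, 1)$ is already $O(\delta^\alpha)$ for both distributions --- in the Gaussian case directly via the explicit formula \eqref{Kacformula2}, and in the general case by combining the repulsion bound of Theorem~\ref{theorem:repulsion:uniform} with an elementary counting argument --- so this thin layer can be absorbed into the error term rather than compared directly.
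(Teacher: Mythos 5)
You should first note that the paper does not prove this statement at all: Theorem~\ref{theorem:TV} is imported as a black box from \cite{TVpolynomial} (Tao--Vu, local universality of zeros), precisely because the machinery developed in this paper does not cover the general class of coefficients in its hypotheses. Your sketch is in the same general spirit as the actual source (a Lindeberg swap at the local scale $x=1-s/n$ with moment matching and truncation to handle the $(2+\eps)$-moment assumption), but the route through a smoothed Kac--Rice identity, as you set it up, has concrete gaps. Most seriously, the step you use to dispose of the region nearest to $1$ is false: the expected number of real roots of the Kac polynomial in $(1-\delta,1)$ is not $O(\delta^{\alpha})$; by the Edelman--Kostlan density it is of order $\frac{1}{2\pi}\log(n\delta)+O(1)$ for $\delta\ge 1/n$, which grows with $n$, and the same is true for general coefficients by \eqref{IM-1}. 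Since the whole interval $I$ already sits inside $(1-n^{-\eps},1]$, this ``thin layer'' is in fact where essentially all the roots under discussion live, so it cannot be absorbed into the error term. Moreover, the anticoncentration and repulsion inputs you invoke (Lemma~\ref{l.almost-zero}, Theorem~\ref{theorem:repulsion:uniform}) are proved in this paper only for Type I/II variables and only on ranges bounded away from $1$ (e.g.\ $x\le 1-n^{-2+\eps}$), whereas Theorem~\ref{theorem:TV} must hold for arbitrary independent coefficients with mean $0$, variance $1$ and uniformly bounded $(2+\eps)$-moments -- including discrete laws not of Type I -- and for points arbitrarily close to $1$; citing them here is both out of scope and circular relative to how the paper uses Theorem~\ref{theorem:TV}.

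There is also a quantitative mismatch in the error bookkeeping. The conclusion requires an error $O(r^{\alpha})$ uniformly in $n$, and $r$ may be far smaller than any fixed negative power of $n$ (the theorem allows any subinterval $(1-r,a)$ of $(1-n^{-\eps},1]$). Your Lindeberg-plus-smoothing error is of the form $\eta^{-A}n^{-c(\eps)}$ after optimization, i.e.\ a negative power of $n$, which does not dominate $r^{\alpha}$ when $r\ll n^{-c/\alpha}$; to get the stated form one needs, in addition, a bound showing that for short intervals each of $\E N_{n,\xi}I$ and $\E N_{n,N(0,1)}I$ is itself $O(r^{\alpha})$ -- a small-ball/delocalization estimate near $1$ valid for the whole moment class, which your sketch does not supply. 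Finally, exchanging the limit $\eta\to 0$ with expectation in the smoothed Kac--Rice identity is delicate for discrete coefficient distributions (uniform integrability again requires anticoncentration), and the third-order Taylor remainder in the swap needs a truncation argument since only $(2+\eps)$ moments are assumed. The actual proof in \cite{TVpolynomial} circumvents these issues by comparing correlation functions of zeros through the log-magnitude of $P_n$ at complex points near the real axis (with Jensen-type arguments), rather than by a direct delta-function swap; if you want to pursue your route, the missing ingredients are precisely a general-class anticoncentration estimate at the edge and an error analysis expressed in terms of $r$ rather than $n$.
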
 


Next, for convenience, we  truncate the random variables $\xi_0,\dots, \xi_n$. Let $d>0$ be a parameter and let $\CB_d$ be the event $|\xi_0|<n^d \wedge \dots \wedge |\xi_n|<n^d$. As $\xi$ has mean zero and bounded $(2+\ep_0)$-moment, we have the following elementary bound for $d\ge 1$
$$\P(\CB_{2d}^c)=O( n^{1-3d}).$$

In what follows we will condition on $\CB_{4}$. Consider $P_{n}(x) = \sum_{i=0} ^n \xi_i x^i$ and  for $m <n$, we set 
$$g_{m} := P_n -P_m =\sum_{i=m+1} ^n \xi_i x^i.$$  

For any $0< x \le 1-r$, a generous Chernoff's bound yields that  for any $\lambda >0$
\begin{align*}
&\P \Big(|g_m(x) | \ge (\lambda+1) n^5 \sqrt  { \sum_{i=m +1}^n (1-r)^{2i} }\bigg| \CB_4 \Big ) \quad \le \\
\le \quad &\P \Big (|g_m(x)|  \ge (\lambda+1) n^5 \sqrt  { \sum_{i=m}^n x^{2i} }\bigg| \CB_4\Big) \quad \le \\
\le \quad &2 \exp( -\lambda^2/2 )  \ \ . 
\end{align*}



Since  $$\sum_{i=m+1}^n (1-r)^{2i} \le (1-r)^{2m+2 } \frac{1}{1 -(1-r)^2 } := s(r,m),  $$ it follows that 
\begin{equation}\label{eqn:gT}
\P (|g_m| \ge  (\lambda+1) n^4 \sqrt{s(r,m)}|\CB_4 ) \le 2 \exp (-\lambda^2/2 ). 
\end{equation}

We next compare the roots of 
$P_n$ and $P_m$ in the interval $(0, 1-r)$. 

\begin{lemma}[Roots comparison for truncated polynomials]\label{lemma:comparison} Let $r \in (1/n, 1)$ and $m \le n$ such that  $m \ge 4B r^{-1} \log n$, where $B=B(3)$ of Theorem \ref{theorem:repulsion:general}. Then for any  subinterval $J$ of $ (0,1-r]$ one has
\begin{equation} \label{Tn3} | \E N_{n,\xi} J  - \E N_{m,\xi} J |   =   O(  m^{-2} )  \ \ , 
\end{equation} 
and the implicit constant depends on $N$ and $B$ only.
\end{lemma}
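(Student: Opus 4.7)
The strategy is to produce a good event $\mathcal G$ with $\P(\mathcal G^c) = O(n^{-3})$ on which $N_{n,\xi}J = N_{m,\xi}J$ pointwise, so that the trivial deterministic bound $|N_{n,\xi}J - N_{m,\xi}J| \le n$ gives
$$|\E N_{n,\xi}J - \E N_{m,\xi}J| \le n \cdot \P(\mathcal G^c) = O(n^{-2}) = O(m^{-2}),$$
using $m \le n$ at the last step. I build $\mathcal G$ as the intersection of: (a) the truncation event $\CB_4 = \{\max_i|\xi_i|\le n^4\}$; (b) the $1-O(n^{-3})$ events from Theorem~\ref{theorem:repulsion:general} applied to $P_m$ and to $P_n$ with $C=3$, producing $B=B(3)$ for which properties (i) no near-double roots and (ii) repulsion hold for both; (c) the delocalization property (iii) applied to $P_m$ separately at each of the two endpoints of $J$; and (d) a uniform bound $\sup_{x\in J}|g_m(x)| \le \eta := n^{-3B-1}$. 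The hypothesis $m \ge 4Br^{-1}\log n$ gives $r \ge 4B(\log n)/m \ge m^{-2+\epsilon}$ for $n$ large, so $J \subset (1/(N+1),\,1-m^{-2+\epsilon}] = I_0^{(m)}$ once the zero-root interval $(0, 1/(N+1)]$ is discarded (in the Type II setting one invokes Theorem~\ref{theorem:repulsion:general'} with $I_0'$ instead).

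For item (d), on $\CB_4$ the crude Lipschitz bound $|g_m'(x)| \le \sum_{i=m+1}^n i|\xi_i| \le n^6$ holds. Since $rm \ge 4B\log n$, one computes $\sqrt{s(r,m)} \le (1-r)^{m+1}/\sqrt{r} = O(n^{-4B+1/2})$, so the tail estimate~\eqref{eqn:gT} with $\lambda = C_0\sqrt{\log n}$ gives $\P(|g_m(x_0)| > n^{-4B+5}\mid\CB_4) = O(n^{-C_0^2/2})$ at any fixed $x_0$. Combining the Lipschitz estimate with a union bound over an $n^{-20}$-spaced net of $J$ produces the uniform bound $\sup_{x\in J}|g_m(x)|\le \eta$ up to a further $O(n^{-3})$ loss once $B$ and $C_0$ are taken sufficiently large.

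On $\mathcal G$, set $A := \{x\in J : |P_m(x)|\le \eta\}$ and decompose $A = \sqcup_j A_j$ into its maximal closed subintervals. Delocalization (iii) of $P_m$ at $\partial J$ places every $A_j$ strictly inside $J$, so at both endpoints of $A_j$ continuity gives $|P_m| = \eta$. Property (i) for $P_m$ yields $|P_m'|>n^{-B}$ throughout $A_j$, so $P_m$ is strictly monotone on $A_j$ and the length of $A_j$ is at most $2\eta/n^{-B} = 2n^{-2B-1} < n^{-B}$; moreover strict monotonicity together with $|P_m|\le \eta$ throughout and $|P_m|=\eta$ at both endpoints forces $P_m$ to take opposite signs at $\partial A_j$, so $P_m$ has exactly one root in each $A_j$. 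Since $|g_m|\le \eta/2$, $P_n = P_m+g_m$ inherits the signs of $P_m$ at $\partial A_j$, so IVT gives at least one root of $P_n$ in $A_j$; property (ii) applied to $P_n$ gives root-separation $\ge n^{-B}$, and the length bound then limits $P_n$ to at most one root on $A_j$. Finally, any root of $P_n$ or $P_m$ in $J$ satisfies $|P_m|\le \eta$ (at a root of $P_n$ one has $|P_m|=|g_m|\le \eta$), so all roots lie in $A$; summing over $j$ gives $N_{n,\xi}J = N_{m,\xi}J$ on $\mathcal G$.

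The main delicacy is the nested-scale bookkeeping in the last paragraph: $\eta$ must be well below the repulsion scale $n^{-B}$ to limit each $A_j$ to at most one root, and uniform smallness of $g_m$ at scale $\eta/2$ must be extracted from the pointwise tail~\eqref{eqn:gT}, which is only possible because the hypothesis $m \ge 4Br^{-1}\log n$ forces $(1-r)^m = O(n^{-4B})$ with enough slack to absorb the Lipschitz–net union-bound loss.
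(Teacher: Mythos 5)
Your root-matching mechanism (the tube $A=\{x\in J: |P_m(x)|\le \eta\}$, exactly one sign change of $P_m$ per component, IVT for $P_n=P_m+g_m$, repulsion to cap each component at one root) is a sound substitute for the paper's auxiliary approximation lemma, and as a side remark the uniform bound on $g_m$ needs no net or Chernoff argument: on $\CB_4$ one has deterministically $|g_m(x)|\le n^4\sum_{i>m}(1-r)^i\le n^5(1-r)^{m+1}\le n^{5-4B}$ for all $x\le 1-r$.

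However, there is a genuine gap in the probability accounting. Theorem \ref{theorem:repulsion:general} (or \ref{theorem:repulsion:general'}) with $C=3$ applied to the degree-$m$ polynomial $P_m$ yields the properties (i)--(iii) with failure probability $O(m^{-3})$, not $O(n^{-3})$: the polynomial bound is in the degree of the polynomial to which the theorem is applied. Hence your good event satisfies only $\P(\mathcal G^c)=O(m^{-3})$, and the symmetric estimate $|\E N_{n,\xi}J-\E N_{m,\xi}J|\le n\,\P(\mathcal G^c)$ gives $O(n m^{-3})$, which is not $O(m^{-2})$ in the regime that actually matters for the application (in Corollary \ref{cor:comparison:L} and Lemma \ref{lemma:mainterm} one has $m\asymp r^{-1}\log n\ll n$); raising the constant $C$ cannot repair this, since one would need $C$ growing like $\log n/\log m$. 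The cure is the asymmetric, one-sided accounting used in the paper: prove the two inequalities separately, namely $\E N_{m,\xi}J\ge \E N_{n,\xi}J-O(n^{-2})$ using only the events attached to $P_n$ (failure probability $O(n^{-3})$, charged against $N_{n,\xi}J\le n$), and $\E N_{n,\xi}J\ge \E N_{m,\xi}J-O(m^{-2})$ using only the events attached to $P_m$ (failure probability $O(m^{-3})$, charged against $N_{m,\xi}J\le m$). Equivalently, within your framework, split $\mathcal G^c$: on the part where the $P_n$-properties or the $g_m$-bound fail (probability $O(n^{-3})$) pay the trivial $n+m$, while on the part where only the $P_m$-properties fail you can still run the one-sided direction matching each root of $P_n$ to a distinct nearby root of $P_m$ inside $J$, so that $N_{m,\xi}J\ge N_{n,\xi}J$ and $|N_{n,\xi}J-N_{m,\xi}J|\le N_{m,\xi}J\le m$ there, giving $m\cdot O(m^{-3})=O(m^{-2})$. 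Without this asymmetry the claimed bound \eqref{Tn3} does not follow from your argument.
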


To prove Lemma \ref{lemma:comparison}, we need the following elementary lemma from \cite{NgOV}.

\begin{lemma}  \label{approximation} 
 Assume that $F (x) \in C^2(\R)$ and $G(x)$ are continuous functions satisfying the following properties
 \begin{itemize}
 \item $F(x_0) =0$ and $|F'(x_0)| \ge \epsilon_1$;
 \vskip .1in
 \item  $|F^{''} (x)  | \le M$ for all $x \in I := [x_0 -\epsilon_1 M^{-1} , x_0 +\epsilon_1 M^{-1} ]$; 
 \vskip .1in
 \item $\sup_{x \in I} | F(x)- G(x) | \le \frac{1}{4} \epsilon_1^{2} M^{-1}$. 
 \end{itemize}
 Then $G$ has a root in $I$.
\end{lemma}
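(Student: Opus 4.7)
The plan is to show directly that the hypotheses force $F$ to take values of opposite sign, uniformly bounded away from zero, at the two endpoints of $I$, and then invoke the continuity of $G$ together with the $\tfrac14 \eps_1^2 M^{-1}$ approximation to pass this sign change over to $G$. The intermediate value theorem will then produce the required root of $G$ in $I$.

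First I would split into cases according to the sign of $F'(x_0)$; by symmetry (replace $F,G$ with $-F,-G$) it is enough to handle the case $F'(x_0) \ge \eps_1 > 0$. The key step is a second-order Taylor expansion of $F$ around $x_0$: for any $x \in I$ there is $\xi$ between $x_0$ and $x$ with
\begin{equation*}
F(x) = F(x_0) + F'(x_0)(x-x_0) + \tfrac{1}{2} F''(\xi)(x-x_0)^2 = F'(x_0)(x-x_0) + \tfrac{1}{2} F''(\xi)(x-x_0)^2,
\end{equation*}
using $F(x_0)=0$. Evaluating at the right endpoint $x = x_0 + \eps_1 M^{-1}$ and using $F'(x_0) \ge \eps_1$ together with $|F''(\xi)| \le M$ on $I$, the linear term contributes at least $\eps_1^2 M^{-1}$ while the quadratic remainder is bounded in absolute value by $\tfrac{1}{2} \eps_1^2 M^{-1}$, giving
\begin{equation*}
F(x_0 + \eps_1 M^{-1}) \;\ge\; \tfrac{1}{2} \eps_1^2 M^{-1}.
\end{equation*}
The analogous evaluation at the left endpoint $x = x_0 - \eps_1 M^{-1}$ gives $F(x_0 - \eps_1 M^{-1}) \le -\tfrac{1}{2} \eps_1^2 M^{-1}$.

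Next I would transfer these sign bounds to $G$ using the third hypothesis $\sup_{x\in I}|F(x)-G(x)| \le \tfrac{1}{4}\eps_1^2 M^{-1}$: the triangle inequality yields
\begin{equation*}
G(x_0 + \eps_1 M^{-1}) \;\ge\; \tfrac{1}{4} \eps_1^2 M^{-1} > 0, \qquad G(x_0 - \eps_1 M^{-1}) \;\le\; -\tfrac{1}{4} \eps_1^2 M^{-1} < 0.
\end{equation*}
Since $G$ is continuous on $I$, the intermediate value theorem produces a root of $G$ inside $I$, as claimed.

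There is no real obstacle here; the lemma is a quantitative version of the elementary observation that a small $C^0$ perturbation of a function with a simple zero still has a zero nearby, and the constants $\tfrac12$ and $\tfrac14$ in the hypotheses are exactly tuned so that the Taylor remainder and the $F$-to-$G$ error each consume half of the linear lower bound $\eps_1^2 M^{-1}$. The only thing worth double-checking during the write-up is that the choice of interval half-width $\eps_1 M^{-1}$ is consistent throughout (so that the Taylor point $\xi$ always lies in $I$ where the bound $|F''|\le M$ is assumed), which it is.
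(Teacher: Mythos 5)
Your proof is correct: the second-order Taylor expansion gives $|F| \ge \tfrac12 \eps_1^2 M^{-1}$ with opposite signs at the two endpoints of $I$, the $C^0$ approximation hypothesis transfers these sign bounds to $G$, and the intermediate value theorem finishes the argument; the symmetry reduction to $F'(x_0)\ge \eps_1$ is also handled properly. Note that the paper itself does not prove this lemma but imports it from \cite{NgOV}; your argument is the standard one for such a statement and is exactly the kind of proof intended there, so there is nothing to add.
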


\begin{proof}[Proof of Lemma \ref{lemma:comparison}]  Conditioned on $\CB_4$, with probability at least $1 - 2\exp( -\log^2 n/2 ) \ge 1 - n^{-\omega (1) }$ the following holds
 $$|P_n (x)- P_m(x) | \le n^5(\log n +1)  \sqrt{s(r, m)} = n^5(\log n +1) (1-r)^{m+ 1} \frac{1}{\sqrt{1 -(1-r)^2} } \le n^{-3B}  $$  
for all $0 \le x \le 1-r$ and $n$ sufficiently large.

By  Theorem \ref{theorem:repulsion:general} or Theorem \ref{theorem:repulsion:general'} (with $C=3$), $|P_n '(x) | \ge n^{-B}$ for all $x\in J$ with probability $1-O( n^{-3})$. Note that the conditioning on $B_4$, which holds with probability at least $1-O(n^{-5})$, will not affect the $O(n^{-3})$ term in this estimate. Applying Lemma \ref{approximation} with $\epsilon_1 =n^{-B}, M=  n^3$, $F= P_n, G = P_m$, we conclude that with probability $1-O(n^{-3})$, for any root $x_0$ of $P_n(x)$ in the interval $(0, 1-r)$  (which is a subset of $(0, 1-1/n)$), there is a root $y_0$ of $P_m (x)$ such that $|x_0 -y_0| \le \epsilon_1 M^{-1}=  n^{-B-3}$.

On the other hand, applying (2) of Theorem \ref{theorem:repulsion:general} or Theorem \ref{theorem:repulsion:general'} with $C=3$, again with probability $1 -O(n^{-3})$ there is no pair of roots of $P_n$ in $J$ with distance less than $n^{-B}$. It follows that  for different roots $x_0$ we can  choose different roots $y_0$. Furthermore, by (3) of Theorem \ref{theorem:repulsion:general} or Theorem \ref{theorem:repulsion:general'}, with probability $1-O(n^{-3})$, all roots of 
$P_n(x)$ must be of distance at least $n^{-B} $ from the two ends of the interval.  If this holds, then all $y_0$ must also be inside the interval. 
This implies that with probability at least $1 -O(n^{-3})$,   the number of roots of $P_m$ in $J$  is at least that of $P_n$. Putting together, we obtain 
\begin{equation} \label{Tn1} \E N_{m,\xi} J  \ge \E N_{n,\xi} J -  O(n^{-3})  n  \ge \E N_{n,\xi} J - O( n^{-2}), 
\end{equation} 
where the factor $n$  comes from the fact that $P_n$ has at most $n$ real roots.

Switching the roles of $P_n$ and $P_m$, noting that as $r \ge 4B\log n/m > 1/m$, 
$$J \subset (0, 1-r] \subset (0, 1-1/m).$$ 

As such, Theorem \ref{theorem:repulsion:general} and Theorem \ref{theorem:repulsion:general'} are also applicable to $P_m(x)$. Argue similarly as above, we also have 
\begin{equation} \label{Tn2} \E N_{n,\xi} J  \ge \E N_{m,\xi} J -  (O(m^{-3})+n^{-3B}) m  \ge  \E N_{m,\xi} J - O( m^{-2}). 
\end{equation} 

It follows that 
$$| \E N_{n,\xi} J  - \E N_{m,\xi} J |   =  O(m^{-2}) .$$
\end{proof}

\subsection{Control of the error term} By iterating Lemma \ref{lemma:comparison}, one can achieve the following.

\begin{corollary}\label{cor:comparison:L} Let $C_0$ be sufficiently large, and let  $I= (0, 1 - C_0^{-1} )$. Then for any sufficiently large integer $L$ (depending on $B=B(3)$ and $C_0$) and $n\ge L$
$$\Big | \E N_{n,\xi} I- \E N_{L,\xi} I \Big| =O(L^{-1}) \ \ , $$
where the implicit constant depends only on $B(3)$ and the parameter $N$ of $\xi$.
\end{corollary}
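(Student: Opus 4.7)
The plan is to chain applications of Lemma~\ref{lemma:comparison} along a sequence of intermediate degrees $n = n_0 > n_1 > \cdots > n_K = L$ that decreases roughly geometrically. I will set $r := C_0^{-1}$ (so that $I \subset (0, 1-r]$) and define recursively $n_0 := n$ and $n_{k+1} := \max\{L, \lceil n_k/2 \rceil\}$, stopping at the first $K$ with $n_K = L$.

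First I would verify that the hypothesis $n_{k+1} \ge 4Br^{-1}\log n_k = 4BC_0 \log n_k$ of Lemma~\ref{lemma:comparison} holds at every step. During the halving phase this reduces to $n_k \ge 8BC_0 \log n_k$, which is automatic for all $n_k \ge L$ once $L$ itself satisfies $L \ge 8BC_0 \log L$, since the function $x \mapsto x - 8BC_0 \log x$ is eventually increasing. For the terminal step one has $n_{K-1} < 2L$, so the condition becomes $L \ge 4BC_0 \log(2L)$, again ensured by taking $L$ sufficiently large depending on $B = B(3)$ and $C_0$.

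Next I would apply Lemma~\ref{lemma:comparison} to each consecutive pair $(n_k, n_{k+1})$ to obtain
\[
\bigl|\E N_{n_k,\xi}\, I - \E N_{n_{k+1},\xi}\, I\bigr| = O(n_{k+1}^{-2}),
\]
with a constant depending only on $N$ and $B$. Because $n_{K-j} \ge 2^{\,j-1} L$ for every $j \ge 1$ by the halving construction, the triangle inequality together with a geometric series yield
\[
\bigl|\E N_{n,\xi}\, I - \E N_{L,\xi}\, I\bigr| \le \sum_{k=0}^{K-1} O(n_{k+1}^{-2}) \le O\!\left(L^{-2} \sum_{j \ge 0} 4^{-j}\right) = O(L^{-2}),
\]
which is already stronger than the claimed $O(L^{-1})$.

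The most delicate ingredient is keeping the hypothesis of Lemma~\ref{lemma:comparison} alive throughout the iteration; this forces $L$ to clear the logarithmic safety margin at every halving and at the final descent. All such requirements are absolute inequalities in $L$ alone (with no running dependence on $n$ or on the intermediate $n_k$), so they can be bundled into the phrase ``$L$ sufficiently large depending on $B(3)$ and $C_0$'' in the statement. The remaining telescoping and geometric-series bookkeeping is routine.
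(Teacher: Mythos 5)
Your proposal is correct and follows essentially the same route as the paper: both fix $r=C_0^{-1}$, iterate Lemma~\ref{lemma:comparison} along a decreasing sequence of intermediate degrees, verify the hypothesis $m\ge 4Br^{-1}\log n$ at each step by taking $L$ large, and telescope the $O(m^{-2})$ errors. The only difference is the descent schedule — you halve at each step, while the paper jumps directly down to $n_{i+1}=1+\lfloor 4BC_0\log n_i\rfloor$ — and your geometric schedule in fact gives the slightly stronger bound $O(L^{-2})$.
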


\begin{proof}[Proof of Corollary \ref{cor:comparison:L}] Let $r=1/C_0$. We assume $L > C_0$ so that $r\in (1/n,1)$ for every $n\ge L$. Define the sequence $\{n_i\}$ with $n_0=n$ and  $n_{i+1}= 1+\lfloor 4B r^{-1} \log n_i \rfloor$.  By ensuring that $C_0$ is sufficiently large we will have $n_{i}>n_{i+1}$ unless $n_{i}=1$. Thus, the sequence $\{n_i\}_{0}^k$ is decreasing, and let $k$ be the first index where $n_{k+1}\le L$. Then
\begin{align*}
| \E N_{n,\xi} I- \E N_{L,\xi} I | &\le  \sum_{i=0}^{k-1}| \E N_{n_i,\xi} I- \E N_{n_{i+1},\xi} I |+ | \E N_{n_k,\xi} I- \E N_{L,\xi} I |\\ 
&= O(\sum_{i=0}^{k-1} n_{i+1}^{-2}+ L^{-2})=O(L^{-1}).
\end{align*}
 Here the last estimate also holds by applying Lemma~\ref{lemma:comparison} for $n=n_k$ and $m=L$, and clearly $L\ge n_{k+1} > 4Br^{-1}\log n_k$, and $r\in (1/L,1) \subset(1/n_k,1)$.
\end{proof}

Next, we use 

\begin{lemma}\cite[Lemma 2, Remark 4]{NgOV}\label{lemma:Jensen} Assume that $\xi_0,\dots,\xi_n$ have mean 0, variance 1, and uniformly bounded $(2+\ep_0)$-moments. Then there exist constants $N_1, N_2$ such that
$$ \E N_{n, \xi} [0, 1- C_0^{-1} )  \le \frac{1}{2\pi}   \log C_0 +N_2,$$

provided that $C_0\ge N_1$,
\end{lemma}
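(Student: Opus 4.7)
\textbf{Proof proposal for Lemma \ref{lemma:Jensen}.}  The statement is an upper bound on the expected number of real zeros in an interval uniformly bounded away from the critical point $1$, so the natural tool is Jensen's formula applied in the complex plane to the polynomial $P_n$.  The plan is to combine a dyadic decomposition of $[0, 1-C_0^{-1})$ with a Jensen-type counting argument on each dyadic piece, then sum the contributions.

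Concretely, I would partition
\[
[0, 1-C_0^{-1}) \;\subset\; [0, 1/2) \cup \bigcup_{k=1}^{K} J_k, \qquad J_k := [1 - 2^{-k+1},\, 1-2^{-k}), \qquad K := \lceil \log_2 C_0 \rceil,
\]
and bound $\E N_{n,\xi}(J_k)$ separately for each $k$.  For each $k$, let $c_k$ be the midpoint of $J_k$ and $r_k = 2^{-k-1}$ its half-length, so any real zero in $J_k$ lies in the complex disk $D(c_k, r_k)$.  Choose an auxiliary radius $R_k$ with $r_k < R_k < 1-c_k$ (for instance $R_k = (1-c_k)/2$), so that the circle $|z-c_k|=R_k$ stays strictly inside the open unit disk.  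Jensen's formula then gives
\[
N_{P_n}\bigl(D(c_k, r_k)\bigr)\,\log\!\frac{R_k}{r_k} \;\le\; \frac{1}{2\pi}\int_0^{2\pi} \log |P_n(c_k + R_k e^{i\theta})|\, d\theta \;-\; \log |P_n(c_k)|,
\]
which upper-bounds in particular the number of real zeros of $P_n$ inside $J_k$.

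The remaining analytic step is to take expectations on both sides.  For the numerator, by concavity of $\log$ and the second-moment identity $\E|P_n(z)|^2 = \sum_{i=0}^n |z|^{2i} \le (1-|z|^2)^{-1}$ valid on $|z|=R_k < 1$, I would obtain
\[
\frac{1}{2\pi}\int_0^{2\pi} \E \log |P_n(c_k + R_k e^{i\theta})|\, d\theta \;\le\; \tfrac{k}{2}\log 2 + O(1).
\]
For the denominator one needs a matching \emph{lower} bound $\E \log |P_n(c_k)| \ge \tfrac{k}{2}\log 2 - O(1)$.  This is the essential input: it follows from a small-ball (anti-concentration) estimate for $P_n(c_k)$ of the type provided by Lemma \ref{l.almost-zero} or by a direct Littlewood--Offord argument combined with the bounded $(2+\varepsilon_0)$-moment hypothesis, using the representation $-\E\log|P_n(c_k)| = \int_0^\infty \P(|P_n(c_k)| < e^{-t})\, dt$.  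With both bounds in hand, a careful choice of $R_k/r_k$ makes the coefficient $1/\log(R_k/r_k)$ equal to $\log 2/(2\pi)$ up to lower-order terms, giving $\E N_{n,\xi}(J_k) \le \frac{\log 2}{2\pi} + O(1/k^{2})$ per dyadic shell.

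Summing the per-shell estimate over $k=1,\ldots,K$ and absorbing the tail $[0,1/2)$ into the constant (where the density of real zeros is uniformly bounded and handled by, say, the classical Kac--Rice formula together with the moment hypothesis) yields $\E N_{n,\xi}[0, 1-C_0^{-1}) \le \frac{1}{2\pi}\log C_0 + N_2$, as claimed.  The main obstacle is extracting the \emph{sharp} constant $\frac{1}{2\pi}$ rather than merely some absolute constant: this is what forces one to use the \emph{integrated} form of Jensen's formula (averaging $\log|P_n|$ over the full circle rather than taking a pointwise supremum), and to pair it with a sharp-in-$k$ lower bound on $\E\log|P_n(c_k)|$ coming from anti-concentration.
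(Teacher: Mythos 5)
You should first note that the paper does not prove this lemma at all: it is imported verbatim from \cite{NgOV} (Lemma 2 and Remark 4 there), so the only question is whether your argument establishes the stated bound, and I do not think it does. The decisive claim in your sketch is the per-shell estimate $\E N_{n,\xi}(J_k)\le \frac{\log 2}{2\pi}+O(1/k^2)$, and the disk-Jensen setup you describe cannot produce it. With $c_k$ the midpoint of $J_k$, $r_k=2^{-k-1}$ and the constraint $R_k<1-c_k=\tfrac32\,2^{-k}$ (needed to keep the circle inside the unit disk, where your bound $\E|P_n(z)|^2\le (1-|z|^2)^{-1}$ is valid), you are forced to have $R_k/r_k<3$, hence $1/\log(R_k/r_k)>1/\log 3\approx 0.9$, which is an order of magnitude larger than $\log 2/(2\pi)\approx 0.11$; the ``careful choice of $R_k/r_k$'' you invoke is numerically impossible, and taking $R_k$ beyond $1-c_k$ makes the circle average of $\log|P_n|$ jump to order $\log n$ and floods $D(c_k,R_k)$ with complex zeros. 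Even granting sharp versions of both of your analytic inputs, Jensen's formula is an identity whose right-hand side equals, in expectation, the weighted count of \emph{all} zeros of $P_n$ in $D(c_k,R_k)$: the complex Kac zeros have density of order $(1-|z|)^{-2}$ in that region and contribute $\Theta(1)$ per shell, the real zeros in the annulus $r_k<|z-c_k|<R_k$ and the excess weights $\log(R_k/|z_j-c_k|)>\log(R_k/r_k)$ for zeros near the center contribute further $\Theta(1)$ amounts, and the concavity loss in $\E\log|P_n|\le\frac12\log\E|P_n|^2$ together with the $O(1)$ slack at the center adds more. After dividing by $\log(R_k/r_k)\le\log 3$, each shell yields a constant well above $\log 2/(2\pi)$, and summing over $\approx\log_2 C_0$ shells gives $c\log C_0+O(1)$ with $c$ strictly larger than $1/(2\pi)$. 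That weaker bound would in fact suffice for the way the lemma is used in this paper (only uniform boundedness in $n$ for fixed $C_0$ is needed in Corollary \ref{cor:limit}), but it is not the statement you set out to prove.

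There is a second gap in the ``essential input'': the lower bound $\E\log|P_n(c_k)|\ge\frac k2\log 2-O(1)$ is not available under the hypotheses of the lemma, which are only mean $0$, variance $1$ and uniformly bounded $(2+\ep_0)$-moments. Lemma \ref{l.almost-zero} is proved only for Type I variables, and a Berry--Esseen or Littlewood--Offord argument at the scale of the standard deviation does not control $\P(|P_n(c_k)|<e^{-t})$ for large $t$, which is what the representation $-\E\log|P_n(c_k)|=\int_0^\infty \P(|P_n(c_k)|<e^{-t})\,dt$ requires. Worse, for a fixed center the expectation can be $-\infty$: for instance with $\xi$ uniform on $\{\pm1,\pm2\}$ and $c_k=1/2$ one has $\P(P_n(1/2)=0)>0$, so the Jensen upper bound at that center is vacuous. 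Any repair must either choose the centers generically (almost every center works), average over centers, or replace expectations by quantiles and combine with a separate small-ball input — none of which is addressed in your sketch, and none of which removes the constant-factor loss described above.
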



For each $L$, denote $C_{L} :=   \E N_{L,\xi} I = \E N_{L,\xi} [0,  1- C_0^{-1}]$.  By Lemma \ref{lemma:Jensen}, $C_L$ are uniformly bounded provided that $C_0\ge N_1$, thus there is a subsequence of $C_{L}$ which tends to a finite limit $C^\ast=C^\ast(C_0)$.  By applying Corollary \ref{cor:comparison:L}, we obtain the following.

\begin{corollary}\label{cor:limit} For any $C_0>0$, there exists $C^\ast=C^\ast(C_0)<\infty$ such that
$$ \lim_{n\rightarrow \infty} \E N_{n,\xi} (0,1-C_0^{-1})= C^\ast .$$ 
\end{corollary}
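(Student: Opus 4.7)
The plan is to upgrade the subsequential convergence of $(C_L)$ noted above into convergence of the full sequence $\bigl(\E N_{n,\xi}(0, 1-C_0^{-1})\bigr)_n$, using Corollary~\ref{cor:comparison:L} as essentially the only tool. The argument is a soft Cauchy/completeness argument: the quantitative work has already been done in the previous corollary.

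First I would show that $(C_L)$ is itself a Cauchy sequence (not merely bounded). Fix $L, L' \ge L_0$ with $L_0$ as in Corollary~\ref{cor:comparison:L}, and pick any $n \ge \max(L, L')$. Applying Corollary~\ref{cor:comparison:L} twice and using the triangle inequality,
\begin{equation*}
|C_L - C_{L'}| \;\le\; |\E N_{n,\xi} I - C_L| \,+\, |\E N_{n,\xi} I - C_{L'}| \;=\; O(L^{-1}) + O((L')^{-1}).
\end{equation*}
The right-hand side tends to $0$ as $L, L' \to \infty$, so $(C_L)$ is Cauchy and hence converges to some finite limit, which must coincide with the limit $C^\ast$ of the convergent subsequence already extracted from Lemma~\ref{lemma:Jensen}.

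Next I would promote this to convergence of the $n$-indexed sequence. Given $\eps > 0$, choose $L$ large enough that $|C_L - C^\ast| < \eps/2$ and the implicit constant in Corollary~\ref{cor:comparison:L} gives an error bounded by $\eps/2$. Then for every $n \ge L$,
\begin{equation*}
|\E N_{n,\xi}(0, 1-C_0^{-1}) - C^\ast| \;\le\; |\E N_{n,\xi} I - C_L| + |C_L - C^\ast| \;<\; \eps,
\end{equation*}
proving the claimed limit. The only cosmetic issue is to reconcile the closed interval $[0, 1-C_0^{-1}]$ appearing in the definition of $C_L$ with the open interval $(0, 1-C_0^{-1})$ in the statement, but $P_n(0) = \xi_0 \neq 0$ almost surely, and the probability that the fixed point $1-C_0^{-1}$ is a root of $P_n$ is negligible (identically zero in the Type II case, and $O(n^{-1/2})$ in the Type I case by a standard Erd\H os--Littlewood--Offord estimate), so removing these two endpoints contributes $o(1)$ and does not affect the limit.

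There is no genuine obstacle here: the entire content of the corollary has been compressed into Corollary~\ref{cor:comparison:L}, which already gives the uniform-in-$n$ approximation $|\E N_{n,\xi} I - C_L| = O(L^{-1})$ for $n \ge L$. The only point requiring mild care is to take $C_0$ in the range where both Lemma~\ref{lemma:Jensen} (so that $C_L$ is bounded) and Corollary~\ref{cor:comparison:L} (so that the iteration-based comparison applies) are available; the statement of the corollary implicitly assumes such a $C_0$.
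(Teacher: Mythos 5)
Your argument is correct and follows essentially the same route as the paper: both proofs rest entirely on the uniform comparison $|\E N_{n,\xi} I - \E N_{L,\xi} I| = O(L^{-1})$ from Corollary~\ref{cor:comparison:L}, the paper combining it with the boundedness of $C_L$ from Lemma~\ref{lemma:Jensen} and a convergent subsequence, while you phrase the same step as a Cauchy argument for $(C_L)$. The endpoint discussion and the caveat that $C_0$ must be large enough for Lemma~\ref{lemma:Jensen} and Corollary~\ref{cor:comparison:L} to apply are fine and match the paper's implicit assumptions.
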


\subsection{Control of the main term} We next turn to the main term by utilizing Theorem \ref{theorem:TV} and Lemma \ref{lemma:comparison}. Recall the constant $\alpha$ from Theorem~\ref{theorem:TV}. Let $0<\ep<\alpha/2$ be a small constant (so that in particular $\ep<1$), and let $C_1=C_1(\ep)$ to be the constant of Theorem \ref{theorem:TV}.


\begin{lemma}\label{lemma:mainterm} 
Assume that $C_0 > C_1^{\ep}$, then  
$$| \E N_{n,\xi} (1- {C_0}^{-1} ,1) - \E N_{n, N(0,1)}  (1- {C_0}^{-1} ,1)| = O( {C_0} ^{-\alpha}),$$
here the implicit constant depends only on $N$, $\alpha$, and $\ep$.
\end{lemma}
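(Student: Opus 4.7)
The strategy is to combine Theorem~\ref{theorem:TV} (universality of root counts for intervals $(1-r,a)\subset(1-n^{-\ep},1]$, with error $O(r^\alpha)$) with Lemma~\ref{lemma:comparison} (reducing the degree $n$ to a smaller $m$, valid only on $(0,1-r]$). Since the target $(1-C_0^{-1},1)$ typically exceeds the $(1-n^{-\ep},1]$ window, I cover it piece by piece via an iteration chain of intermediate degrees $n=n_0>n_1>\dots>n_K$ and corresponding scales $r_0=n^{-\ep}<r_1<\dots<r_K=C_0^{-1}$.

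First, partition
\[
(1-C_0^{-1},1)=(1-n^{-\ep},1)\cup\bigcup_{k=0}^{K-1}A_k,\qquad A_k:=(1-r_{k+1},1-r_k].
\]
The inner piece $(1-n^{-\ep},1)$ is handled by Theorem~\ref{theorem:TV} applied directly to $P_n$, with error $O(n^{-\ep\alpha})\le O(C_0^{-\alpha})$ (one may assume $C_0\le n^{\ep}$; in the opposite case $(1-C_0^{-1},1)\subset(1-n^{-\ep},1)$ and a single application finishes the proof). For each annulus $A_k$, choose the intermediate degree $n_{k+1}$ inside the window $[\,4Br_k^{-1}\log n_k,\,r_{k+1}^{-1/\ep}\,]$, saturating so that $r_{k+1}=n_{k+1}^{-\ep}$. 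The lower bound makes Lemma~\ref{lemma:comparison} applicable step by step along the chain $P_{n_i}\leftrightarrow P_{n_{i+1}}$ on $(0,1-r_i]$ for every $i\le k$ (valid for both the general $\xi$ and $N(0,1)$, the latter being Type~II), with per-step error $O(n_{i+1}^{-2})$. The upper bound makes Theorem~\ref{theorem:TV} applicable to $P_{n_{k+1}}$ on $A_k\subset(1-n_{k+1}^{-\ep},1]$, with error $O(r_{k+1}^\alpha)$. Telescoping on each annulus gives
\[
\bigl|\E N_{n,\xi}(A_k)-\E N_{n,N(0,1)}(A_k)\bigr|\le 2\sum_{i=0}^{k}O(n_{i+1}^{-2})+O(r_{k+1}^\alpha).
\]

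In log-coordinates $t_k=\log n_k$, the recursion reads $t_{k+1}=\ep t_k+\log(4Bt_k)$, contracting $t_k$ by essentially a factor of $\ep$ per step. Starting from $t_0=\log n$ and stopping at $t_K=(1/\ep)\log C_0$ takes $K=O(\log\log n)$ steps; the hypothesis $C_0>C_1^\ep$ guarantees $n_K\approx C_0^{1/\ep}\ge C_1$, the threshold required by Theorem~\ref{theorem:TV}. Since the successive ratios $r_{k+1}/r_k\ge C_0^{(1-\ep)/\ep}$ are bounded below by a constant $>1$ (for $C_0$ large), the partial sums $\sum_k r_{k+1}^\alpha$ form a geometric series whose total is dominated by the final term $r_K^\alpha=C_0^{-\alpha}$. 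For the Lemma~\ref{lemma:comparison} contribution, swapping the order of summation gives $\sum_{i=0}^{K-1}(K-i)\,O(n_{i+1}^{-2})$; since $n_i\approx n^{\ep^i}$ decays super-geometrically and the index $i=K-1$ is the only slow one, this sum is likewise controlled by $O(n_K^{-2})=O(C_0^{-2/\ep})$, which is $O(C_0^{-\alpha})$ after possibly shrinking $\alpha$ so that $\alpha\le 2/\ep$.

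\textbf{Main obstacle.} The chief subtlety is avoiding the fixed-point barrier that blocks a naive single-reduction argument. If one tries to compare $P_n$ directly with $P_{n_{k+1}}$, Lemma~\ref{lemma:comparison} forces $n_{k+1}\ge 4Br_k^{-1}\log n$ using the \emph{original} $\log n$, and combined with the Theorem~\ref{theorem:TV} range $n_{k+1}\le r_{k+1}^{-1/\ep}$ this produces a scale recursion with a fixed point at $r\approx(4B\log n)^{-\ep/(1-\ep)}$, which for large $n$ is well below $C_0^{-1}$; the iteration can never reach the target scale. Chaining through intermediates circumvents this precisely because each step uses $\log n_i$ rather than $\log n$, and the sequence $n_i$ collapses quickly enough that the new recursion $t_{k+1}=\ep t_k+\log(4Bt_k)$ has no lower barrier above $(1/\ep)\log C_0$. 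A related care point is verifying that the $K=O(\log\log n)$ annuli do not introduce extraneous $\log\log n$ factors in the accumulated errors; this follows from the super-geometric decay of both $r_k$ and $n_k^{-2}$ along the chain.
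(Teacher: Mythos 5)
Your proposal is correct and follows essentially the same route as the paper: the same recursive chain $n_{i+1}\approx 4Br_i^{-1}\log n_i$ with $r_i=n_i^{-\ep}$, the same annulus decomposition $(1-r_{k+1},1-r_k]$, telescoping via Lemma~\ref{lemma:comparison} (applicable to the Gaussian chain since $N(0,1)$ is Type II) and then Theorem~\ref{theorem:TV} at the smallest degree, with the error sums controlled by lacunarity of the $n_i$. The only cosmetic difference is that you work directly at target scale $C_0^{-1}$ (stopping at $n_K\approx C_0^{1/\ep}\ge C_1$), whereas the paper proves the equivalent rescaled statement at scale $C_0^{-\ep}$ stopping at $n_{L+1}\approx C_0$.
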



We will show the following equivalent statement: assume that $C_0>C_1$, then 
\begin{equation}\label{eqn:mainterm:side}
| \E N_{n,\xi} (1- {C_0}^{-\ep} ,1) - \E N_{n, N(0,1)}  (1- {C_0}^{-\ep} ,1) | =O( {C_0} ^{-\ep \alpha}).
\end{equation}

\begin{proof}[Proof of Lemma \ref{lemma:mainterm}]  We will justify \eqref{eqn:mainterm:side} following \cite{NgOV}. Set $n_0:=n, r_0 = n^{-\ep}$ and define recursively 
$$ n_{i} := 1+ \lfloor 4B  r_{i-1}^{-1}  \log n_{i-1} \rfloor , \mbox{ and } r_{i} := n_{i}^{-\ep}, i\ge 1.$$ 

It is clear that $\{n_i\}$ and $\{r_i\}$ are, respectively, strictly decreasing and increasing sequences . Let $L$ be the largest index such that $n_L > C_0$. It follows that $C_0 \ge n_{L+1} > 4B n_L^{\ep} \log n_L\ge n_L^{\ep}$, therefore $n_L <  C_0^{1/ \epsilon}$. It also follows that
$$4B\log n_L < C_0^{1-\epsilon}$$
Redefine $n_{L+1}:=1+ \lfloor C_0 \rfloor$ and $r_{L+1}=C_0^{-\ep}$. 
It is clear that we still have $n_L \ge n_{L+1} \ge 4B r_L^{-1} \log n_L$.

For $1\le i\le L+1$, define $I_i := (1-r_{i}, 1-r_{i-1}]$. For every $1\le j\le i$ we have  $I_i \subset (0,1-r_{j-1}]$ while $r_{j-1}\in (1/n_{j-1}, 1)$. Thus, by \eqref{Tn3}, for any $1\le j\le i$ we have
$$|\E N_{n_{j-1},\xi} I_i  -\E N_{n_{j},\xi } I_i  |  = O( n_{j}^{-2} ). $$

By the triangle inequality,
\begin{equation}\label{eqn:triangle:1}
|\E N_{n_0,\xi} I_i  -\E N_{n_{i},\xi } I_i  |  = O( \sum_{j=1}^{i} n_{j}^{-2}) =O( n_{i}^{-1}). 
\end{equation}

Similarly, as standard Gaussian distribution is of type II, 
\begin{equation}\label{eqn:triangle:Gaussian}
|\E N_{n_0,N(0,1)} I_i  -\E N_{n_{i},N(0,1) } I_i  |  = O( \sum_{j=1}^{i} n_{j}^{-2}) =O( n_{i}^{-1}). 
\end{equation}

On the other hand, note that $n_i \ge C_1$ for $i\le L+1$, and $I_i=(1-n_{i}^{-\ep}, 1-n_{i-1}^{-\ep}]\subset (1-n_i^{-\ep},1)$.  By Theorem \ref{theorem:TV} 
\begin{equation}\label{eqn:triangle:2}
|\E N_{n_{i},\xi } I_i - \E N_{n_{i} , N(0,1)} I_i  | \le  O(n_i^{-\epsilon \alpha} ). 
\end{equation}

Combining \eqref{eqn:triangle:1} and \eqref{eqn:triangle:2}, one obtains
\begin{equation}\label{eqn:triangle:3}
|\E N_{n_0,\xi} I_i  -\E N_{n_0,N(0,1) } I_i  |  =O(n_{i}^{-1} +  n_i^{-\epsilon \alpha}). 
\end{equation}

Let $I =\cup_{i=0}^{L+1} I_i$, again by the triangle inequality
$$|\E N_{n,\xi} I - \E N_{n, N(0,1)} I |   =O( \sum_{i=0}^{L+1} n_i^{-1} +  \sum_{i=0}^{L+1}    n_i^{-\epsilon \alpha}). $$

The right end point of $I$ is $1-n^{-\ep}$, and the left end point is $1 -r_{L+1} = 1 -C_0^{-\ep}$. Furthermore, by definition of the $n_i$, it is easy to see that $(n_i)_{i=0}^{L+1}$ is lacunary, therefore
$$  \sum_{i=0}^{L+1} n_i^{-\epsilon \alpha } =O( n_{L+1}^{-\epsilon \alpha})  =O( {C_0} ^{-\ep \alpha} ) \ \ .$$

Thus, 
$$| \E N_{n,\xi} (1 -C_0^{-\ep},1-n^{-\ep} ]) - \E N_{n, N(0,1)} (1 -C_0^{-\ep},1-n^{-\ep} ] | = | \E N_{n,\xi} I - \E N_{n, N(0,1)} I | =  O( {C_0}^{-\ep \alpha}).$$ 

Combined with Theorem~\ref{theorem:TV}, 
$$| \E N_{n,\xi} (1-C_0^{-1}, 1)- \E N_{n, N(0,1)} (1-C_0^{-1}, 1) | = O( {C_0}^{-\ep \alpha}) + O(n^{-\epsilon\alpha}) = O( {C_0}^{-\ep \alpha}),$$

proving \eqref{eqn:mainterm:side}.



\end{proof}

\subsection{Completing the proof of Theorem \ref{theorem:expectedvalue:2}} It suffices to show that 
\begin{equation}\label{eqn:EK:uniform}
\E N_{n,\xi}(0,1)=\frac{1}{2\pi}\log n + C_\xi+o(1).
\end{equation}

To this end, we first complement the result of Lemma \ref{lemma:mainterm} by giving an estimate for $\E N_{n,N(0,1)}(1-C_0^{-1},1)$.


\begin{claim}\label{claim:gaussian} For every $C_0\in (1,\infty)$ there exists a finite number $B^\ast=B^\ast(C_0)$ that depends only on $C_0$ such that 
$$\E N_{n, N(0,1) }  (1- C_0^{-1} , 1) =\frac{1}{2\pi}\log n + B^\ast + o_{C_0}(1) \ \ ,$$
here the $o_{C_0}(1)$ term is with respect to the limit $n\to\infty$, and the implied constant depends on $C_0$.
\end{claim}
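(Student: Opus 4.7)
The plan is to deduce Claim~\ref{claim:gaussian} by combining the Edelman--Kostlan asymptotic \eqref{eqn:EK} with Corollary~\ref{cor:limit} applied to the Gaussian itself, exploiting symmetries that are specific to the Gaussian Kac polynomial.

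First, I would use the two symmetries available in the Gaussian case to reduce to the interval $(0,1)$. Since $N(0,1)$ is symmetric about $0$, the polynomials $P_n(x)$ and $P_n(-x)$ have the same law, giving $\E N_{n,N(0,1)}(-1,0)=\E N_{n,N(0,1)}(0,1)$ and $\E N_{n,N(0,1)}(-\infty,-1)=\E N_{n,N(0,1)}(1,\infty)$. The reciprocal symmetry $x^nP_n(1/x)\stackrel{d}{=}P_n(x)$ identifies real zeros in $(1,\infty)$ with real zeros in $(0,1)$, so $\E N_{n,N(0,1)}(1,\infty)=\E N_{n,N(0,1)}(0,1)$. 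Finally, the atomic contributions at $\{-1,0,1\}$ vanish because $P_n(0)=\xi_0$, $P_n(1)=\sum\xi_i$ and $P_n(-1)=\sum(-1)^i\xi_i$ are all Gaussian with strictly positive variance. Consequently
$$\E N_{n,N(0,1)} \;=\; 4\,\E N_{n,N(0,1)}(0,1),$$
and feeding \eqref{eqn:EK} into this identity gives
$$\E N_{n,N(0,1)}(0,1) \;=\; \frac{1}{2\pi}\log n \;+\; \frac{C_{Gau}}{4} \;+\; o(1).$$

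Second, I would apply Corollary~\ref{cor:limit} to $\xi=N(0,1)$. The standard Gaussian is of Type II with parameters $(p,\ep_0)$ for every $p>1$ and every $\ep_0>0$, since its density lies in $L^p$ for all $p\ge 1$ and all moments are finite. The whole proof chain leading to Corollary~\ref{cor:limit} (Theorem~\ref{theorem:repulsion:general'}, Lemma~\ref{lemma:comparison}, Lemma~\ref{lemma:Jensen}) is valid under these hypotheses. Hence there exists a finite constant $C_{Gau}^\ast(C_0)$, depending only on $C_0$, such that
$$\lim_{n\to\infty}\E N_{n,N(0,1)}(0,\,1-C_0^{-1}) \;=\; C_{Gau}^\ast(C_0).$$

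Finally, subtracting these two asymptotics,
$$\E N_{n,N(0,1)}(1-C_0^{-1},1) \;=\; \E N_{n,N(0,1)}(0,1) \;-\; \E N_{n,N(0,1)}(0,\,1-C_0^{-1}) \;=\; \frac{1}{2\pi}\log n \;+\; B^\ast(C_0) \;+\; o_{C_0}(1),$$
where $B^\ast(C_0):= C_{Gau}/4 - C_{Gau}^\ast(C_0)$ is finite and depends only on $C_0$; the $o_{C_0}(1)$ error is the sum of the absolute $o(1)$ from \eqref{eqn:EK} and the (in general $C_0$-dependent) $o_{C_0}(1)$ rate from Corollary~\ref{cor:limit}.

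The main potential obstacle is merely a hypothesis check: one must verify that Corollary~\ref{cor:limit}, which was formulated for Type~I/II variables, truly applies to the standard Gaussian. This is straightforward since all its underlying ingredients (the repulsion estimate, the comparison lemma, and the Jensen-type bound) demand only finite moments and an $L^p$-density, which $N(0,1)$ satisfies in abundance. Should one wish to avoid invoking \eqref{eqn:EK}, an alternative direct route would be to insert the Edelman--Kostlan density, substitute $v=(n+1)(1-t)$ to obtain the universal limiting profile $\phi(v)=\frac{1}{2\pi}\sqrt{v^{-2}-\sinh^{-2}v}$, and apply dominated convergence to isolate the $\frac{1}{2\pi}\log n$ term and a convergent remainder; however, the symmetry-based argument above is considerably shorter.
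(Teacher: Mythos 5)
Your argument is correct in substance, and its outer skeleton (four-fold symmetry of the Gaussian Kac polynomial plus \eqref{eqn:EK} to get $\E N_{n,N(0,1)}(0,1)=\frac{1}{2\pi}\log n+\frac14 C_{Gau}+o(1)$, then subtracting the bulk contribution) is exactly what the paper does implicitly when it sets $B^\ast(C_0)=\frac14 C_{Gau}-\int_0^{1-C_0^{-1}}\frac{dt}{\pi(1-t^2)}$. Where you diverge is the bulk step: the paper establishes $\lim_n \E N_{n,N(0,1)}[0,1-C_0^{-1})$ directly from the explicit Edelman--Kostlan integral formula, whose integrand converges to $\frac{1}{\pi(1-t^2)}$ on the fixed bulk interval, so the limit exists for \emph{every} $C_0>1$ and with an explicit value; you instead invoke Corollary~\ref{cor:limit} for $\xi=N(0,1)$, which is legitimate (the Gaussian is Type II, and no circularity arises since Corollary~\ref{cor:limit} does not rest on Claim~\ref{claim:gaussian}), but it buys only abstract existence of the limit and is heavier machinery than the one-line dominated-convergence computation available in the Gaussian case. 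One caveat worth flagging: although Corollary~\ref{cor:limit} is \emph{stated} for any $C_0>0$, its proof (via Corollary~\ref{cor:comparison:L} and Lemma~\ref{lemma:Jensen}) requires $C_0$ sufficiently large, whereas the claim asks for all $C_0\in(1,\infty)$; for small $C_0$ you would have to fall back on the explicit-integral route you mention at the end as an ``alternative'' --- which is in fact the paper's proof, and which also has the cosmetic advantage of identifying $B^\ast(C_0)$ explicitly.
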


\begin{proof}[Proof of Claim \ref{claim:gaussian}]

We will use the following formula from \cite{EK}, which  asserts that the following equality holds for every interval $I \subset [0,1]$:
$$\E N_{n, N(0,1) } I = \int_{I}  \frac{1}{\pi} \sqrt { \frac{1}{(t^2-1)^2} - \frac{(n+1)^2 t^{2n} }{ (t^{2n+2} -1)^2 } } dt.$$
In particular, for every fixed $1<C_0<\infty$ we have
$$\lim_{n\to\infty} \E N_{n,N(0,1)} [0,1-C_0^{-1})  = \int_0^{1-C_0^{-1}} \frac{1}{\pi(1-t^2)}dt$$
thus we can define
$$B^\ast(C_0) = \frac 1 4C_{Gau} - \int_0^{1-C_0^{-1}} \frac{1}{\pi(1-t^2)}dt $$
here recall that $C_{Gau}$ is the constant in the asymptotics expansion  \eqref{eqn:EK}  of the Gaussian Kac polynomial.
\end{proof}


It then follows from Lemma \ref{lemma:mainterm} that 
$$\E N_{n, \xi}  (1- C_0^{-1} , 1) =\frac{1}{2\pi}\log n + B^\ast(C_0) + O(C_0^{-\alpha}) + o_{C_0}(1) \ \ ,$$
and in the $O(C_0^{-\alpha})$ term the implicit constant may depend on $\alpha,\epsilon$ and $\xi$.

Combining with Corollary \ref{cor:limit}, we obtain
$$ \E N_{n, \xi}(0,1)    = \frac{1}{2\pi} \log n + B^\ast(C_0) + C^\ast (C_0) +   O(C_0^{-\alpha}) +o_{C_0}(1), $$ 
where $C^\ast = C^\ast(C_0)$ is a number depending on $C_0$.

Replacing $C_0$ by $C_0'$ and subtract,
$$|B^\ast(C_0') + C^\ast(C_0') -B^\ast(C_0) - C^\ast(C_0) | =   O(C_0^{-\alpha} + C_0'^{-\alpha}) + o_{C_0,C_0'}(1),$$
and sending $n\to\infty$ we obtain
 $$|B^\ast(C_0') + C^\ast(C_0') -B^\ast(C_0) - C^\ast(C_0) | =   O(C_0^{-\alpha} + C_0'^{-\alpha}) $$

which shows that the function $B^\ast(C_0)+ C^\ast(C_0)$ tends to a limit  as $C_0$ tends to infinity.  Denote this limit by $C_{\xi}$, it follows that
$$\E N_{n,\xi}(0,1)  =\frac{1}{2\pi} \log n +  C_{\xi} + o(1), $$ 

concluding the proof.

\begin{remark}
Note that in this application section, we have applied our roots repulsion results (Theorem \ref{theorem:repulsion:general} and Theorem \ref{theorem:repulsion:general'}) only for the interval $(0,1-\log^2n/n]$. 
\end{remark}


\appendix

\section{Sharpness of Fact \ref{fact:arithmetic} and proof of Claim \ref{l.endpoint}} \label{appendix:1} 

We first address Fact \ref{fact:arithmetic}. It is clear that when $n$ is even then $P_n(1),P_n(-1)$ are odd numbers, and hence the polynomial cannot have double roots at these points. We next show the same thing for the case $n=4k+1$.

\begin{fact}\label{fact:4k+1}
Assume that $n=4k+1$, then $P_n(x)$ cannot have double root at $-1$ or $1$.
\end{fact}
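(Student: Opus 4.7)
The plan is to extract a parity obstruction from the two equations $P_n(1)=0$ and $P_n'(1)=0$, and then reduce the $-1$ case to the $+1$ case by the substitution $x\mapsto -x$.

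First I would handle the double root at $1$. A double root there means
\[
\sum_{i=0}^n \xi_i = 0 \qquad \text{and} \qquad \sum_{i=1}^n i\,\xi_i = 0.
\]
Let $S_+=\{i:\xi_i=+1\}$ and $S_-=\{i:\xi_i=-1\}$. The first equation forces $|S_+|=|S_-|$, so $n+1$ is even (consistent with $n=4k+1$). The second equation can be rewritten as
\[
\sum_{i\in S_+} i \;=\; \sum_{i\in S_-} i \;=\; \tfrac12\sum_{i=0}^n i \;=\; \tfrac{n(n+1)}{2}.
\]
For $n=4k+1$ we compute $\tfrac{n(n+1)}{2}=(4k+1)(2k+1)$, and both factors are odd, so this quantity is an odd integer. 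Hence $\sum_{i\in S_+} i$ would have to equal half an odd integer, which is impossible since it is an integer. This contradiction rules out a double root at $1$.

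Second, for the double root at $-1$, I would apply the substitution $x\mapsto -x$. The polynomial $\widetilde P_n(x):=P_n(-x)=\sum_{i=0}^n (-1)^i\xi_i\, x^i$ has coefficients $\eta_i=(-1)^i\xi_i\in\{\pm 1\}$, so it is again a Bernoulli polynomial of degree $n=4k+1$. A double root of $P_n$ at $-1$ corresponds precisely to a double root of $\widetilde P_n$ at $1$, which is ruled out by the argument of the previous paragraph. This completes the proof.

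There is no real obstacle here; the only thing to be slightly careful about is the bookkeeping $n(n+1)/2=(4k+1)(2k+1)$ and verifying that this is odd, which provides the parity clash.
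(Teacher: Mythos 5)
Your proposal is correct and is essentially the paper's argument: both extract a parity obstruction from $P_n(1)=P_n'(1)=0$ (the paper subtracts the two equations and reads the result mod $2$, you split indices by sign and use that $\tfrac{n(n+1)}{2}=(4k+1)(2k+1)$ is odd), and both reduce the root at $-1$ to the root at $1$ via $x\mapsto -x$. One harmless slip: in your displayed chain the quantity $\tfrac12\sum_{i=0}^n i$ equals $\tfrac{n(n+1)}{4}$, not $\tfrac{n(n+1)}{2}$, but your following sentence makes clear you are using the correct relation $\sum_{i\in S_+} i=\tfrac12\cdot\tfrac{n(n+1)}{2}$.
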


\begin{proof}(of Fact \ref{fact:4k+1})
Assume that $P_n(1)=P_n'(1)=0$, then one has
$$\xi_0+\xi_1+\dots+\xi_{4k}+\xi_{4k+1}= 0 \mbox{ and } \xi_1+2\xi_2 +\dots + 4k\xi_{4k}+(4k+1)\xi_{4k+1}=0.$$

Consequently, 
$$\xi_2 +2\xi_3 + \dots + (4k-1)\xi_{4k}+ 4k \xi_{4k+1}=\xi_0.$$

However, this is impossible as the RHS is an odd number, while the LHS is clearly an even number for any choice of $\xi_2,\dots,\xi_{4k+1} \in {\pm 1} $. 

The non-existence of double root at $-1$ can be argued similarly (or just by setting $Q_n(x):=P_n(-x)$.)
\end{proof}


We now give a brief explanation that the probability bound of Fact \ref{fact:arithmetic} is sharp, up to a multiplicative constant. 

\begin{lemma}\label{lemma:tight} Let $\xi$ be a Bernoulli random variable and $n+1$ be divisible by 4. Then 
$$ \P( P_n(1) = P'_n (1) =0) =\Omega (n^{-2}) . $$
\end{lemma}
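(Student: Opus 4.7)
The event $\{P_n(1)=P_n'(1)=0\}$ is exactly $(S_0,S_1)=(0,0)$, where
\[
S_0 \;:=\; \sum_{i=0}^n \xi_i, \qquad S_1 \;:=\; \sum_{i=0}^n i\,\xi_i.
\]
Writing $\xi_i = 2\varepsilon_i-1$ with $\varepsilon_i \in \{0,1\}$ iid uniform, $A := \{i:\varepsilon_i=1\}$, and using $n+1 = 4k$, the event is equivalent to
\[
|A| = 2k \quad \text{and}\quad T(A) := \sum_{i\in A} i \;=\; k(4k-1).
\]
The hypothesis $4\mid (n+1)$ is precisely what makes both target values integers lying in the support of the corresponding lattice distributions.

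My plan is to bound this from below by a local central limit theorem for the two-dimensional lattice walk $(S_0,S_1)=\sum_{i=0}^n V_i$ with steps $V_i := (\xi_i,\,i\,\xi_i)$. A direct computation gives $\E[V_i] = (0,0)$ and covariance matrix
\[
\Sigma \;=\; \begin{pmatrix} n+1 & n(n+1)/2 \\ n(n+1)/2 & n(n+1)(2n+1)/6 \end{pmatrix},
\]
with $\det\Sigma = n(n+1)^2(n+2)/12 \asymp n^4$, so $\Sigma$ is uniformly non-degenerate. A standard 2D local CLT for sums of independent, uniformly bounded, lattice random vectors (Lindeberg is trivial since $|V_i|=O(n)$ while $\det\Sigma \asymp n^4$) then yields
\[
\P\bigl((S_0,S_1)=(0,0)\bigr) \;=\; \frac{c}{2\pi\sqrt{\det\Sigma}}\,(1+o(1)) \;=\; \Theta(n^{-2}),
\]
where $c>0$ is the density of the step-lattice coset containing $(0,0)$; the assumption $4\mid(n+1)$ is exactly what places $(0,0)$ in this coset, so $c>0$.

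Equivalently, and perhaps more elementarily, one may condition on $|A|=2k$: by Stirling, $\P(|A|=2k)=\binom{4k}{2k}\,2^{-4k}=\Theta(k^{-1/2})$. Given $|A|=2k$, the set $A$ is a uniformly random $2k$-subset of $\{0,\dots,4k-1\}$, so by the simple-random-sample variance formula $\Var(T(A)\mid |A|=2k) \asymp k^3$. A one-dimensional local CLT for a uniform sample without replacement then gives $\P(T(A)=k(4k-1)\mid |A|=2k) = \Theta(k^{-3/2})$, and multiplying produces $\Theta(k^{-1/2})\cdot\Theta(k^{-3/2}) = \Theta(n^{-2})$.

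The main obstacle is extracting the matching \emph{lower} bound (as opposed to just an upper bound) from the local CLT: one must show that the characteristic function $\varphi(s,t) = \prod_{j=0}^n \cos(s+j t)$ does not develop stray peaks far from $(0,0)$ that could distort the Fourier inversion integral $\P((S_0,S_1)=(0,0)) = (2\pi)^{-2}\iint \varphi(s,t)\,ds\,dt$ (taken over the fundamental domain of the dual lattice). This is handled by standard anti-concentration: outside a box of scale $n^{-1}\times n^{-2}$ around $(0,0)$, most factors $|\cos(s+jt)|$ are uniformly bounded away from $1$, so the off-center contribution is $o(n^{-2})$. Thus the formal stationary-phase value $\Theta(n^{-2})$ is the correct order, which proves the lemma.
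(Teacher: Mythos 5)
Your route---Fourier inversion / a local CLT for the lattice vector $(S_0,S_1)=(P_n(1),P_n'(1))$, with a Gaussian main term of order $(\det\Sigma)^{-1/2}\asymp n^{-2}$---is in essence the same as the paper's proof, which implements exactly this as a circle-method computation modulo a large prime, localizes to the same $n^{-1/2}\times n^{-3/2}$ frequency box, and evaluates the peak by a Gaussian integral (Lemma \ref{lemma:S1}). The genuine gap in your write-up is at the one point where the hypothesis $4\mid(n+1)$ must do real work. The vector $(S_0,S_1)$ is supported on a coset of the lattice $2\Z^2$, so the characteristic function $\varphi(s,t)=\prod_{j=0}^n\cos(s+jt)$ satisfies $|\varphi|=1$ not only at the origin but also at $(\pi,0)$, $(0,\pi)$ and $(\pi,\pi)$. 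Consequently your key claim---that outside a box of scale $n^{-1}\times n^{-2}$ around $(0,0)$ the contribution to $(2\pi)^{-2}\iint_{[-\pi,\pi)^2}\varphi\,ds\,dt$ is $o(n^{-2})$---is false: each of the three secondary peaks contributes a term of the same order $n^{-2}$, and for a \emph{lower} bound one must show these add constructively rather than cancel. They do cancel when $n+1\equiv 2\pmod 4$ (there $\varphi=+1$ at two of the four peaks and $-1$ at the other two, and indeed the probability is exactly $0$ by the parity argument of Fact \ref{fact:4k+1}), so any argument that discards the off-center region wholesale cannot be correct. You do gesture at the coset issue in the local-CLT phrasing, but the inversion formula you write (normalization $(2\pi)^{-2}$, hence domain $[-\pi,\pi)^2$) is inconsistent with restricting to the dual fundamental domain of $2\Z^2$ (which would be $[-\pi/2,\pi/2)^2$ with normalization $\pi^{-2}$); the proof needs one of the two fixes made explicit: either that restriction together with the observation that $(0,0)$ lies in the support coset precisely when $4\mid(n+1)$, or, as the paper does, a verification that the four peak regions give equal contributions when $4\mid(n+1)$, which are then summed.

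Two further points are smaller but still real. First, a ``standard 2D local CLT'' cannot simply be cited here: the steps $V_i=\xi_i(1,i)$ are independent but very far from identically distributed, so the local theorem must be proved, and its entire content is the off-peak characteristic-function estimate; moreover the mechanism there is not that ``most factors are uniformly bounded away from $1$'' (near the boundary of the peak region every factor is $1-o(1)$), but an aggregate bound of the form $\sum_{j}\operatorname{dist}(s+jt,\pi\Z)^2\gtrsim\log n$, which is precisely the paper's Claim \ref{claim:integral} and requires an argument covering, e.g., the regime $|t|\asymp n^{-3/2}\log^2 n$. Second, your alternative sketch (condition on $|A|=2k$, then a local limit theorem for $\sum_{i\in A}i$ over a uniform $2k$-subset, i.e.\ the central coefficients of the Gaussian binomial coefficient) is a genuinely different and attractive route---there the conditional law has span $1$ and the target $k(4k-1)$ is its exact mean, so the coset/sign subtlety disappears---but as written it rests on an unproved (though known) local limit theorem for sampling without replacement, so it too is currently a plan rather than a proof.
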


We use the circle method. Let $v_i:=(1,i)$, and $V=\{v_0,\dots,v_n\}$. Let $p\gg n$ be a sufficiently large prime. We first write
\begin{align}\label{eqn:tight:1}
\P(\sum_i \xi_i v_i=0) &=\E_{\xi_0,\dots,\xi_n}\E_{\Bx\in(\Z/p\Z)^2} e_p (\langle \sum_{i=1}^n \xi_i v_i,\Bx \rangle)=\E_{\Bx\in(\Z/p\Z)^2}\prod_{i=0}^n \cos(2\pi \langle v_i,\Bx \rangle /p).
\end{align}

Observe that $|\cos(\pi 2wx/p)| \le 3/4 + \cos(4\pi wx/p)/4 \le \exp(-\|2w x /p\|^2)$, where $\|.\|$ is the distance to the nearest integer. We are going to analyze the sum 
$$\sum_{v_i\in V} \| 2\langle \Bx, v_i \rangle/p\|^2 = \sum_{i=0}^n \|(2x_1+2i x_2)/p\|^2 .$$ 

Basically, if this sum is quite large, then its distribution in \eqref{eqn:tight:1} is negligible. We state the following elementary claims whose proofs are left to the reader as an exercise.

\begin{claim}\label{claim:integral} The following holds.  
\begin{itemize}
\item Assume that $\|\frac{2x_2}{p}\|\ge  \frac{\log^2n}{n^{3/2}}$, then 
$$ \sum_{i=0}^n \|(2x_1+2i x_2)/p\|^2 \ge 4\log n.$$
\vskip .1in
\item Assume that $\|\frac{2x_2}{p}\| \le  \frac{\log^2n}{n^{3/2}}$ and $\|\frac{2x_1}{p}\|\ge  \frac{\log^2n}{n^{1/2}}$, then 
$$\sum_{i=0}^n \|(2x_1+2i x_2)/p\|^2 \ge 4\log n.$$
\end{itemize}
\end{claim}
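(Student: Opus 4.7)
The plan is to set $\theta := 2x_2/p$ and $a := 2x_1/p$, reduced modulo $1$ to representatives in $[-1/2,1/2]$ (so that $|\theta| = \|2x_2/p\|$ and $|a| = \|2x_1/p\|$), and to bound
\[
S := \sum_{i=0}^{n} \|a + i\theta\|^2
\]
from below by analysing the arithmetic progression $\{a + i\theta\}_{i=0}^{n}$ on the circle $\R/\Z$. The basic tool is an elementary identity: whenever a window $J$ of consecutive indices produces no wrap around any integer, the contribution equals $\sum_{i\in J}(c + i\theta)^2$ for a suitable constant $c$, and by completing the square this is minimised by centring the AP at $0$, giving at least $\theta^2 |J|^3/12$ up to a lower-order term.

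For the second bound I first note that $n|\theta| \le \log^2 n/\sqrt n$, so the entire AP occupies an interval of length at most $\log^2 n/\sqrt n$ on the real line. Since $|a| \ge \log^2 n/\sqrt n$ as well, I would check directly that $\|a + i\theta\| \ge \log^2 n/\sqrt n$ for every $i$: indices before any wrap satisfy $\|a+i\theta\| \ge |a|$, while indices after the (at most one) wrap past $\pm 1/2$ satisfy $\|a+i\theta\| \ge 1/2 - \log^2 n/\sqrt n$, and for $n$ large the latter is again at least $\log^2 n/\sqrt n$. Summing yields $S \ge (n+1)(\log^2 n/\sqrt n)^2 = (n+1)\log^4 n/n \gg 4\log n$.

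For the first bound I split on the relative sizes of $n$ and $|\theta|$. If $n|\theta| \le 1/2$, at most one wrap occurs across the entire AP, so by pigeonhole at least $(n+1)/2$ consecutive indices form a no-wrap window; the displayed inequality with $|J| \ge n/2$ combined with $|\theta| \ge \log^2 n/n^{3/2}$ yields $S \ge \theta^2 n^3/96 \ge \log^4 n/96$. If $n|\theta| > 1/2$ but $|\theta| \le 1/4$, I would partition $[0,n]$ into roughly $4n|\theta|$ consecutive blocks, each of length $L = \lceil 1/(4|\theta|)\rceil$; every block spans at most an arc of length $3/4$ on $\R/\Z$, so the no-wrap argument applied inside each block gives a per-block contribution of order $\theta^2 L^3 \ge c/|\theta|$, summing to $S \ge c' n$ over all blocks.

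The delicate case will be $|\theta| \in [1/4, 1/2]$, where the block length $L$ degenerates to $2$ or $3$ and the quadratic-sum argument loses force. I would handle this by pairing consecutive indices: a direct minimisation over $u \in \R$ of $\|u\|^2 + \|u+\theta\|^2$ (the minimum is achieved at $u = -\theta/2$) shows that
\[
\|u\|^2 + \|u+\theta\|^2 \ge \theta^2/2
\]
for every $\theta \in [0,1/2]$, and summing over the $\lfloor (n+1)/2\rfloor$ disjoint pairs $(2j,\,2j+1)$ gives $S \ge (n+1)\theta^2/4 \ge (n+1)/64$, which is $\gg 4\log n$. Combining all regimes yields the desired estimate; the only step requiring real care is the transition at $|\theta| \approx 1/4$ and the book-keeping of the at-most-one-wrap structure inside each block.
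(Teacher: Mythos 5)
The paper itself leaves Claim \ref{claim:integral} as an exercise, so there is no official proof to measure you against; judged on its own, your treatment of the first bullet is essentially correct (the no-wrap window bound $\theta^2(|J|^3-|J|)/12$, the block decomposition, and the pairing inequality $\|u\|^2+\|u+\theta\|^2\ge\|\theta\|^2/2$ are all valid, and the three regimes cover all $\theta$), but the second bullet contains a step that is false as written. You assert that $\|a+i\theta\|\ge \log^2 n/\sqrt n$ for \emph{every} $i$, justified by ``indices before any wrap satisfy $\|a+i\theta\|\ge|a|$''. That monotonicity only holds when the progression drifts \emph{away} from $0$, i.e.\ when $a$ and $\theta$ have the same sign, and this is not a without-loss-of-generality reduction. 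Take $a\approx \log^2 n/\sqrt n$ and $\theta\approx-\log^2 n/n^{3/2}$, which is admissible since the hypotheses only bound $|a|$ from below and $|\theta|$ from above: then $a+i\theta$ decreases toward $0$ as $i\to n$ with no crossing of $\pm 1/2$, roughly half of the terms $\|a+i\theta\|$ fall below your claimed pointwise bound, and your stated conclusion $S\ge(n+1)\log^4 n/n$ actually fails (the sum is $\sim\log^4 n/3$ there). The Claim survives, but this derivation of it does not.

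The repair is short and stays within your framework: restrict to $0\le i\le n/2$. Since $(n/2)|\theta|\le \log^2 n/(2\sqrt n)\le |a|/2$, for these indices either $|a+i\theta|\ge |a|-(n/2)|\theta|\ge |a|/2\ge \log^2 n/(2\sqrt n)$, or (in case $\pm1/2$ has been crossed) $\|a+i\theta\|\ge 1/2-\log^2 n/(2\sqrt n)$, which is even larger for $n$ big; hence $S\ge (n/2)\cdot\log^4 n/(4n)=\log^4 n/8\ge 4\log n$. On the first bullet, one simplification would also smooth out the ``delicate'' transition you flag at $|\theta|\approx 1/4$ (where blocks of length $2$ or $3$ give nothing from the quadratic-sum bound): the pairing inequality alone yields $S\ge (n+1)\|\theta\|^2/4$, which disposes of every $|\theta|\ge c$ for any fixed constant $c>0$ (indeed of every $|\theta|\gg\sqrt{\log n/n}$), so the block argument is only needed when $|\theta|$ is small and the block length is genuinely large.
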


It follows from Claim \ref{claim:integral} that the main term of the sum in \eqref{eqn:tight:1} is governed by  $\|2x_1/p\|\le  \log^2n/n^{1/2}$ and $\|2x_2/p\| \le  \log^2n/n^{3/2}$. Thus either $\|x_1/p\|\le  \log^2n/2n^{1/2}$ or $\|x_1/p + 1/2\|\le  \log^2n/2n^{1/2}$ and $\|x_2/p\| \le  \log^2n/2n^{3/2}$ or $\|x_2/p+1/2\| \le  \log^2n/2n^{3/2}$. As $4|n+1$, the interested reader is invited to check that the contribution in \eqref{eqn:tight:1} of one of these four cases  are the same. It is thus enough to work with the case $\|x_1/p\|\le  \log^2n/2n^{1/2}$ and   $\|x_2/p\| \le  \log^2n/2n^{3/2}$. We are going to show the following.

\begin{lemma}  \label{lemma:S1}
\begin{align*}
S:& =  \frac{1}{p^2} \sum_{\|\frac{x_1}{p}\| \le \frac{\log^2 n}{n^{1/2}}, \|\frac{x_2}{p}\| \le \frac{\log^2 n}{n^{3/2}} } \prod_{i=0}^n   \cos \frac{2\pi  (x_1+ ix_2)}{p} \\
&= \frac{1}{p^2} \sum_{\|\frac{x_1}{p}\| \le \frac{\log^2 n}{n^{1/2}}, \|\frac{x_2}{p}\| \le \frac{\log^2 n}{n^{3/2}} } \prod_{i=0}^n   \Big|\cos \frac{2\pi  (x_1+ ix_2)}{p}\Big|\\ 
&= \Omega (n^{-2}).
\end{align*}
\end{lemma}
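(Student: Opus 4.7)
The plan is an elementary lower bound: restrict the summation to a small central subregion of $(\Z/p\Z)^2$ on which the product of cosines is bounded below by a positive absolute constant, then count lattice points in that subregion.

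First I would verify the stated equality between $S$ and its version with absolute values. For $(x_1,x_2)$ in the summation range and any $0\le i\le n$, the triangle inequality gives
$$\Big\|\tfrac{x_1+ix_2}{p}\Big\| \le \Big\|\tfrac{x_1}{p}\Big\|+i\Big\|\tfrac{x_2}{p}\Big\| \le \tfrac{\log^2 n}{n^{1/2}}+n\cdot\tfrac{\log^2 n}{n^{3/2}} = \tfrac{2\log^2 n}{n^{1/2}}<\tfrac14$$
for $n$ sufficiently large, so each cosine factor is strictly positive and the product coincides with its absolute value.

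For the lower bound $S=\Omega(n^{-2})$, I would fix a small absolute constant $c>0$ and restrict the sum to the sub-rectangle
$$\mathcal R_c := \Big\{(x_1,x_2)\in(\Z/p\Z)^2: \|x_1/p\|\le c/n^{1/2},\ \|x_2/p\|\le c/n^{3/2}\Big\}.$$
On $\mathcal R_c$ the argument $2\pi\|(x_1+ix_2)/p\|$ is at most $4\pi c/n^{1/2}$, which is $\le 1$ for $n$ large, so the elementary inequality $\cos\theta\ge e^{-\theta^2}$ on $|\theta|\le 1$ yields
$$\prod_{i=0}^n\cos\tfrac{2\pi(x_1+ix_2)}{p}\ \ge\ \exp\Big(-4\pi^2\sum_{i=0}^n\|(x_1+ix_2)/p\|^2\Big).$$
Using the triangle inequality together with the identities $\sum_{i=0}^n i=n(n+1)/2$ and $\sum_{i=0}^n i^2=n(n+1)(2n+1)/6$, the exponent is bounded above by a constant multiple of $c^2$ uniformly over $\mathcal R_c$, so the product is at least $e^{-Kc^2}$ for some absolute constant $K$.

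Finally, since $p\gg n$ may be taken as large as we wish, the cardinality satisfies $|\mathcal R_c|=(1+o(1))\cdot 4c^2 p^2/n^2$, and hence
$$S\ \ge\ \frac{1}{p^2}\cdot|\mathcal R_c|\cdot e^{-Kc^2}\ =\ (1+o(1))\cdot\frac{4c^2 e^{-Kc^2}}{n^2}\ =\ \Omega(n^{-2}),$$
as required. The only nontrivial aspect is matching the two scales $n^{-1/2}$ and $n^{-3/2}$ to the coordinates $x_1/p$ and $x_2/p$ so that the three terms in the quadratic form $(n+1)\|x_1/p\|^2+n(n+1)\|x_1/p\|\|x_2/p\|+\tfrac{n(n+1)(2n+1)}{6}\|x_2/p\|^2$ all remain $O(c^2)$; these are precisely the scales appearing in Claim \ref{claim:integral}, and correspond to the natural widths of the positive-definite Gaussian form $s^2+st+t^2/3$ obtained after rescaling.
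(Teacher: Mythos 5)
Your proof is correct, and it takes a more elementary route than the paper's. The paper works with the entire summation region: it views $x_1,x_2$ as integers of size at most $p\log^2 n/n^{1/2}$ and $p\log^2 n/n^{3/2}$, uses the two-sided asymptotic $\cos\theta=\exp\big(-(1/2+o(1))\theta^2\big)$, lets $p\to\infty$ to replace the normalized sum by a double integral, bounds $(x_1+ix_2)^2\le 2(x_1^2+i^2x_2^2)$ to factor the integral into two one-dimensional Gaussian integrals, and finishes by a change of variables; as the paper remarks, that computation can even be pushed to give the exact asymptotic constant. You instead discard most of the main region, keeping only the box $\|x_1/p\|\le c/n^{1/2}$, $\|x_2/p\|\le c/n^{3/2}$ (legitimate, since your first step shows every cosine in the full range is positive, so dropping terms only decreases the sum), prove a uniform pointwise lower bound $e^{-Kc^2}$ there via $\cos\theta\ge e^{-\theta^2}$ on $|\theta|\le 1$ and the expansion of $\big(\|x_1/p\|+i\|x_2/p\|\big)^2$ against $\sum_i 1$, $\sum_i i$, $\sum_i i^2$, and then count lattice points, using that $p$ may be taken as large as desired relative to $n$ so the box contains $\Omega(c^2p^2/n^2)$ points. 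What your approach buys is the avoidance of both the Riemann-sum-to-integral limit and any Gaussian integral evaluation; what it gives up is the sharp constant, which the lemma does not require. The scale-matching you note at the end (the widths $n^{-1/2}$ and $n^{-3/2}$ making all three terms of the quadratic form $O(c^2)$) is exactly the same phenomenon that makes the paper's factorized Gaussian integrals each of constant order after rescaling, so the two arguments are quantitatively consistent.
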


The method below gives an exact estimate on the asymptotic constant, but we will not need this fact.

\begin{proof}[Proof of Lemma \ref{lemma:S1}] The first equality is trivial, as all cosines are positive in this range of $\Bx$. Viewing $x_1$ and $x_2$ as integers with absolute value at most $p\log^2 n  /n^{1/2}$ and   $ p\log^2  n  /n^{3/2}$ respectively. We have
$$\cos \frac{2\pi  (x_1 + i x_2)}{p} = 1 -\Big(\frac{1}{2} +o(1)\Big) \frac{4\pi^2 (x_1+ix_2)^2 }{p^2}   =\exp\Big(- (1/2+o(1)) \frac{4\pi^2 (x_1+ix_2)^2 }{p^2 }\Big) . $$

It follows that as $p\rightarrow \infty$

\begin{align*}
S &=\Big(1+o(1)\Big) \int_{|x_1|\le \frac{\log^ 2 n}{n^{1/2}}, |x_2| \le \frac{\log^ 2 n}{n^{3/2}} } \exp\Big(- (1/2+o(1)) 4\pi^2 \sum_{i=1}^n(x_1+ix_2)^2   \Big) dx_1dx_2\\
&\ge  \Big(1+o(1)\Big) \int_{|x_1|\le \frac{\log^ 2 n}{n^{1/2}}, |x_2| \le \frac{\log^ 2 n}{n^{3/2}} } \exp\Big(- (1/2+o(1))  4\pi^2 \sum_{i=1}^n2(x_1^2+i^2x_2^2) \Big) dx_1dx_2\\
&\ge \Big(1+o(1)\Big) \int_{|x_1|\le \frac{\log^ 2 n}{n^{1/2}}} \exp\Big(- (1/2+o(1))  8\pi^2 nx_1^2\Big) dx_1 \int_{|x_2| \le \frac{\log^ 2 n}{n^{3/2}} }   \exp\Big(- (1/2+o(1))  8\pi^2 n^3x_2^2/6   \Big)dx_2.
\end{align*}

After changing variables $y_1= \sqrt{8n\pi^2} x_1$ and  $y_2= \sqrt {4\pi^2n^3/6} x_2 $,  and using the Gaussian  identity $ \frac{1}{\sqrt {2\pi}}  \int^{\infty}_{-\infty}  \exp(-y^2/2 ) dy =1$, we have $S \ge \Omega(n^{-2})$, completing the proof. 

\end{proof}

To complete the picture, we prove Claim \ref{l.endpoint}. In fact, more is true: 

\begin{claim}\label{claim:anticoncentration} Let $u_i=(1,i)$ and $v_i=(1,(-1)^{i-1}i)$, $0\le i\le n$. Assume that $\xi_0,\dots,\xi_n$ are iid copies of a random variable of variance one and bounded $(2+\ep)$-moment. Then we have
\begin{itemize}
\item $\sup_{a\in \R^2}\P(\sum_i \xi_i u_i =a)=O(n^{-2})$;
\vskip .1in
\item $\sup_{a\in \R^2}\P(\sum_i \xi_i v_i =a)=O(n^{-2})$.
\end{itemize} 
 
\end{claim}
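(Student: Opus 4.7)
My plan is to use the Fourier (circle) method, in the same spirit as Lemma \ref{lemma:tight}. Observe first that when $\xi$ has a density the claim is trivial: for $n\ge 2$ the collections $\{u_i\}$ and $\{v_i\}$ each contain two linearly independent vectors, so $\sum_i \xi_i u_i$ and $\sum_i \xi_i v_i$ have densities on $\R^2$ and the stated probabilities vanish. I therefore focus on the lattice case and (after an integer rescaling) assume $\xi$ is supported on $\Z$ with $\gcd$ of support equal to $1$. For any $a\in\Z^2$, Fourier inversion on the torus $[-1/2,1/2]^2$ gives
\begin{equation*}
\P\Bigl(\sum_{i=0}^n \xi_i u_i = a\Bigr) \;\le\; \int_{[-1/2,1/2]^2} \prod_{i=0}^n |\phi(2\pi(t_1 + it_2))|\, dt_1\, dt_2,
\end{equation*}
where $\phi$ is the characteristic function of $\xi$. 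Since $\xi$ has mean zero and variance one, one has the uniform bound $|\phi(2\pi s)|\le \exp(-c\|s\|^2)$ with $c>0$ depending only on $\xi$ and $\|s\|$ the distance from $s$ to $\Z$, so it suffices to prove
\begin{equation*}
\int_{[-1/2,1/2]^2}\exp\Bigl(-c\sum_{i=0}^n \|t_1+it_2\|^2\Bigr)\,dt_1\, dt_2 = O(n^{-2}).
\end{equation*}

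To carry out this estimate I will partition the domain into a central region $\Omega_0:=\{|t_1|\le M/\sqrt n,\ |t_2|\le M/n^{3/2}\}$ and its complement, with $M=M(c)$ a large constant. On $\Omega_0$, every $|t_1+it_2|$ is at most $2M/\sqrt n\le 1/2$ for large $n$, so $\|t_1+it_2\| = |t_1+it_2|$ and the exponent becomes a positive definite quadratic form $Q(t)=\sum_{i=0}^n (t_1+it_2)^2$ whose Gram matrix has entries $\bigl(n+1,\; n(n+1)/2,\; n(n+1)(2n+1)/6\bigr)$ and determinant $n(n+1)^2(n+2)/12 = \Theta(n^4)$; the Gaussian integral bound then yields $\int_{\R^2} e^{-cQ(t)}\,dt = O(n^{-2})$. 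On the complement of $\Omega_0$, the proof of Claim \ref{claim:integral} (with the parameter $\log^2 n$ replaced by the constant $M$, and separating the subcases $\|t_2\|\ge M/n^{3/2}$ and $\|t_2\|\le M/n^{3/2}$ with $\|t_1\|\ge M/\sqrt n$) gives $\sum \|t_1+it_2\|^2 \gtrsim M^2$, so the integrand is at most $e^{-cM^2}$. Choosing $M$ large enough to make $e^{-cM^2}\ll n^{-2}$ finishes the estimate.

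The bound for $v_i = (1,(-1)^{i-1}i)$ follows the identical plan, with $t_1+it_2$ replaced by $t_1 + (-1)^{i-1}it_2$ and the sum split by parity. Each parity class is an arithmetic progression of step $2t_2$ and length $\sim n/2$, so the complement-of-central-region estimate comes from applying the $u_i$-analysis to each class separately. The central-region Gram matrix still has determinant of order $n^4$, since the cross term $\sum (-1)^{i-1}i$ contributes only an $O(n^2)$ off-diagonal entry, negligible compared to the diagonal product of order $n\cdot n^3$. The main technical obstacle is the analogue of Claim \ref{claim:integral} with a constant $M$ in place of a slowly growing parameter; however, the original proof (separating into subcases on the size of $\|t_2\|$ relative to $n^{-3/2}$, then on the size of $\|t_1\|$ relative to $n^{-1/2}$) goes through with only the implicit constants changing, and no genuinely new idea is required.
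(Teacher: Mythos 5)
Your route (a direct circle-method/Halász computation in the spirit of Lemma \ref{lemma:tight}) is genuinely different from the paper's, but as written it has two real gaps. First, the opening reduction is not exhaustive: the claim only assumes $\xi$ has variance one and bounded $(2+\ep)$-moment, so $\xi$ need not have a density and need not be lattice. A mixed distribution is easy to dispose of (if the atomic mass is $q<1$, the event $\sum_i\xi_iu_i=a$ forces every $\xi_i$ into the atomic part, giving the bound $q^{n+1}$), but a purely atomic, non-lattice $\xi$ (say atoms at two rationally independent points) is not covered by your argument at all: for such $\xi$ there is no torus on which to do Fourier inversion, $\limsup_{|t|\to\infty}|\phi(t)|=1$, and no bound of the form $|\phi(2\pi s)|\le e^{-c\|s\|^2}$ with $\|\cdot\|$ a distance to a lattice is available, so the computation does not start. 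This is exactly why the paper does not argue this way: its proof of Claim \ref{claim:anticoncentration} is a short contradiction via the inverse Littlewood--Offord theorem \cite[Theorem 2.5]{NgV-advances} --- if $\rho\ge C_1n^{-2}$, most of the $u_i$ lie in a symmetric GAP of rank $r$ and size $O(\rho^{-1}n^{-r/2})$; since the $u_i$ lie on the affine line $\{(1,y)\}$, at most one of them can lie on a rank-one symmetric GAP, so $r\ge2$ and the GAP has size $O(n/C_1)<n/2$, contradicting that it must contain most of the $n+1$ distinct points $u_i$ --- and that machinery applies to general $\xi$ with the stated moment hypotheses.

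Second, even in the lattice case your minor-arc estimate fails quantitatively. With $M=M(c)$ a fixed constant, the best available bound off $\Omega_0$ is the pointwise bound $e^{-cM^2}$, and this is sharp near the boundary (e.g. at $t_2=0$, $t_1=M/\sqrt n$ the sum $\sum_i\|t_1+it_2\|^2$ is about $M^2$); bounding the complement integral by its measure times this supremum therefore gives a constant, and the instruction ``choose $M$ large enough that $e^{-cM^2}\ll n^{-2}$'' cannot be met by any $M$ independent of $n$. The repair is what the paper itself does in Lemma \ref{lemma:tight} and Claim \ref{claim:integral}: let the thresholds grow with $n$ (there $\log^2n/\sqrt n$ and $\log^2n/n^{3/2}$; $M$ of order $\sqrt{\log n}$ would also do), which still keeps all $|t_1+it_2|\le 1/2$ on the central box so the Gaussian bound $O(n^{-2})$ survives after extending to $\R^2$, while the off-region integrand becomes $n^{-\omega(1)}$. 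With that change (and noting that mean zero is neither assumed nor needed --- the exponential bound on $|\phi|$ should be phrased via the symmetrized variable $\xi-\xi'$, with the lattice normalization applied to its support), your Bernoulli-style computation goes through, including the parity splitting for $v_i$, where the step of each class is $2t_2$ and the extra region with $\|2t_2\|$ small but $t_2$ near $\pm1/2$ must be excluded by observing that the two classes are centered $1/2$ apart. As submitted, however, the proposal neither covers the generality of the claim nor closes the minor-arc estimate.
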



\begin{proof}[Proof of Claim \ref{claim:anticoncentration}] As the proof for the second estimate is similar, it suffices the prove the first one. We'll use \cite{NgV-advances}. Assume otherwise that $\rho:=\sup_{a\in \R^2}\P(\sum_i \xi_i u_i =a)\ge C_1n^{-2}$ for some sufficiently large $C_1$.  Then by \cite[Theorem 2.5]{NgV-advances} most of the $v_i$ belongs to a symmetric GAP $Q$ of rank $r$ and size 
$$|Q|\le O(\rho^{-1}/n^{r/2}).$$

As $r\ge 2$, $|Q|\le O(C_1^{-1}n)$, which is smaller than $n/2$ if $C_1$ is sufficiently large, a contradiction to the fact that $Q$ contains most of the $u_i$.
\end{proof}


\section{A remark on Theorem \ref{theorem:inverse}}\label{appendix:inverse}
\subsection{Deduction of Theorem \ref{theorem:inverse} from \cite{NgV-advances}} 

Recall that at the end of the proof of \cite[Theorem 2.9]{NgV-advances} (equation (22)), we obtained the following for the dilated set $V_\beta:=\{\beta^{-1}v_1,\dots, \beta^{-1}v_n\}$: for any $n^\ep_0 \le n'\le n$, there exists a subset $V_\beta' \subset V_\beta$ of size at least $n-n'$ such that  
\begin{align}\label{eqn:sizeofkV'}
\mu_{Lebesgue} \left( k(V_\beta'\cup \{0\})+C(0,\frac{1}{256y_0}) \right)  =O(  \rho^{-1} y_0^{-1}\exp(-\frac{m}{4}+2) m^{1/2}).
\end{align}

where $C(0,r)$ is the open disk of radius $r$; and $1< y_0 =O(1)$ and $1\le m=O(\log n)$, here the implied constants are allowed to depend on $\xi,\beta$ and $C'$. 

Notice that $k=\sqrt{n'/64\pi^2m}$, and as $n$ is sufficiently large, $\ell_0 = \sqrt{n'/\log^2 n} \le k$. Thus we also have
\begin{align}\label{eqn:sizeofkV'}
\mu \left( \ell_0(V_\beta'\cup \{0\})+C(0,\frac{1}{256y_0}) \right)=O( \rho^{-1} y_0^{-1}\exp(-\frac{m}{4}+2) m^{1/2}).
\end{align}

Let $D:=1024y_0$. We approximate each vector $v'$ of $V_\beta'$ by a closest number in $\frac{\Z}{D\ell_0}$,
$$|v'-\frac{a}{D\ell_0}| \le \frac{1}{D\ell_0}, \mbox{ with } a\in\Z.$$

Let $A_\beta$ be the collection of all such $a$. It follows from \eqref{eqn:sizeofkV'} that
\begin{align*}
|\ell_0(A_\beta+C_0(0,1))|&= O(\rho^{-1}\ell_0 \exp(-\frac{m}{4}+2)m^{d/2}) =O(\rho^{-1}\ell_0),
\end{align*}
where $C_0(0,r)$ is the discrete cube $\{(z_1,\dots,z_d)\in \Z^d: |z_i|\le r\}$.

Now we apply Theorem \cite[Theorem 1.21]{TVJohn}, it is implied that the set $\ell_0(A_\beta+C_0(0,1))$ belongs to a GAP $Q_0$ of size $O(\rho^{-1}\ell_0)$, where the implied constant depends on  $\xi,\beta$ and $C'$. Next, by iterating \cite[Theorem 3.40]{TVbook} if necessary, one can assume that $Q_0$ is $C$-proper, while the size $|Q_0|$ remains $O(\rho^{-1}\ell_0)$ but the implied constant now also depends on $C$.

In the next step, one applies \cite[Lemma A.2]{NgV-advances} to ''divide'' the GAP $Q_0$, obtaining a $C$-proper GAP $P\subset \Z$ containing $A_\beta+C_0(0,1)$, which has small rank $1\le r=O(1)$, and small size $|P|=O(\rho^{-1}\ell_0 \ell_0^{-r}) = O(\rho^{-1}).$

Set  $Q=\frac{\beta}{\ell_0D} \cdot P$, then the following holds:

\begin{itemize}
\item $Q$ has small rank, $r=O (1)$, and small cardinality $|Q| =O(\rho^{-1}\ell_0^{1-r}) =O(\rho^{-1})$;
\vskip .1in
\item for all but at most $n'$ elements $a$ of $\{a_1,\dots,a_n\}$, there exists $q\in Q$ such that  

$$|q-a|\le \beta/D\ell_0;$$ 
\vskip .1in

\item $Q$ is $C$-proper;
\vskip .1in
\item as $C_0(0,1)\in P$, there exist $|k_1|\le L_1,\dots, |k_d|\le L_d$ such that 

$$\beta/D\ell_0 =\sum_i k_i g_i;$$
\vskip .1in
\item as $P\subset \Z$, all steps $g_i$ of $Q$ are {\it integral multiples} of $\beta/D\ell_0$.
\end{itemize}


\begin{thebibliography}{99}
\bibitem{BP} A. Bloch and G. P\'olya, On the roots of certain algebraic equations,  \emph{Proc. London Math. Soc.} \textbf{33}(1932), 102-114.
\bibitem{BCSS} L. Blum, F. Cucker, M. Shub and S. Smale, {\it Complexity and Real Computation}, Springer-Verlag, New York, 1998.
\bibitem{Book1} A. T. Bharucha-Reid and M. Sambandham, {\it Random polynomials, Probability and Mathematical Statistics}. Academic Press, Inc., Orlando, Fla., 1986.
\bibitem{CKMW} F. Cucker,  T. Krick, G. Malajovich and M. Wschebor,  A Numerical Algorithm for Zero Counting. III: Randomization and Condition, {\it Advances in Applied Mathematics} {\bf 48}, 215-248.
\bibitem{EK}  A. Edelman and E. Kostlan, How many zeros of a random polynomial are real?,  \emph{Bull. Amer. Math. Soc.} (N.S.) \textbf{32} (1995), 1--37. Erratum: Bull. Amer. Math. Soc. (N.S.) \textbf{33} (1996), 325.
\bibitem{erd} T. Erd\'elyi, Extensions of the Bloch-P\'olya theorem on the number of real zeroes of polynomials, \emph{J. Th\'eor. Nombres Bordeaux} \textbf{20} (2008), no. 2, 281�-287. 
\bibitem{EO}  P. Erd\H{o}s and A. C.  Offord, On the number of real roots of a random algebraic equation, \emph{Proc. London Math. Soc.} \textbf{6} (1956), 139--160.
\bibitem{Book2}  K. Farahmand, Topics in random polynomials, {\it Pitman research notes in mathematics}, series 393. Longman, Harlow, 1998.
\bibitem{IM1}  I. A. Ibragimov and N. B.  Maslova, {The average number of zeros of random polynomials},  \emph{Vestnik Leningrad. Univ.} \textbf{23} (1968), 171--172.
\bibitem{IM2}  I. A. Ibragimov and N. B.  Maslova, {The mean number of real zeros of random polynomials. I. Coefficients with zero mean}, \emph{Theor. Probability Appl.} \textbf{16} (1971), 228--248.
\bibitem{IM3}  I. A. Ibragimov and N. B. Maslova, { The mean number of real zeros of random polynomials. II. Coefficients with a nonzero mean.},  \emph{Theor. Probability Appl.} \textbf{16} (1971), 485--493.
\bibitem{IM4} I. A. Ibragimov and N. B.  Maslova, {The average number of real roots of random polynomials}, \emph{Soviet Math. Dokl.} \textbf{12} (1971), 1004--1008.


\bibitem{kac-0} M. Kac, On the average number of real roots of a random algebraic equation, \emph{Bull. Amer. Math. Soc.} \textbf{49} (1943) 314--320.
\bibitem{kac-1} M.  Kac, On the average number of real roots of a random algebraic equation. II. \emph{Proc. London Math. Soc.} \textbf{50}, (1949), 390--408.
\bibitem{kac}  M. Kac, Probability and related topics in physical sciences, {\it Lectures in Applied Mathematics, Proceedings of the Summer Seminar}, Boulder,
Colo., 1957, Vol. I Interscience Publishers, London-New York, 1959.
\bibitem{Kostlan} E. Kostlan,  On the distribution of roots of random polynomials, Chapter 38, {\it From Topology to Computation: Proceeding of the Samefest}, edited by M. W. Hirsch, J.E. , Marsden and 
M. Shub, Springer-Verlag, NY 1993. 
\bibitem{lo} J. E. Littlewood and  A. C. Offord, On the distribution of the zeros and a-values
of a random integral function. I., \emph{J. Lond. Math. Soc.}, \textbf{20} (1945), 120-136.

\bibitem{lo-3} J. E. Littlewood and A. C. Offord,  {On the number of real roots of a random algebraic equation. II.}  \emph{Proc. Cambridge Philos. Soc.} \textbf{35}, (1939), 133-148.
\bibitem{lo-4}  J. E. Littlewood and A. C.  Offord, {On the number of real roots of a random algebraic equation. III. } \emph{Rec. Math. [Mat. Sbornik] N.S.} \textbf{54}, (1943), 277-286.
\bibitem{lo-2} J. E. Littlewood and A. C. Offord, {On the distribution of the zeros and values
of a random integral function. II.}, \emph{Ann. Math.} \textbf{49} (1948), 885--952.  Errata, \textbf{50} (1949), 990-991.
976), 35--58.
\bibitem{LS1}  B. F. Logan and L. A. Shepp, {Real zeros of random polynomials.} \emph{Proc. London Math. Soc.} \textbf{18} (1968), 29--35.
\bibitem{LS2}  B. F. Logan and L. A.  Shepp, \emph{ Real zeros of random polynomials. II. } \emph{Proc. London Math. Soc.} \textbf{18} (1968), 308--314.
\bibitem{Mas1}  N. B. Maslova, {The variance of the number of real roots of random polynomials. } \emph{Teor. Vero- jatnost. i Primenen.} \textbf{19} (1974), 36--51.
\bibitem{Mas2} N. B. Maslova,  {The distribution of the number of real roots of random polynomials. } \emph{Theor. Probability Appl.} \textbf{19} (1974), 461--473 
\bibitem{NgV-advances} H. Nguyen and V. Vu, Optimal Inverse Littlewood-Offord theorems, \emph{Adv. Math.} \textbf{226} (2011), no. 6, 5298--5319. 
\bibitem{NgOV} H. Nguyen, O. Nguyen and V. Vu,  On the number of real roots of random polynomials, {\it submitted}.

\bibitem{PSZ}  R. Peled,  A. Sen and O. Zeitouni,  Double roots of random Littlewood polynomials, \url{http://arxiv.org/abs/1409.2034}. 

\bibitem{Stev} D.C. Stevens, The average number of real zeros of a random polynomial. \emph{Comm. Pure Appl. Math.} \textbf{22} (1969), 457--477.


\bibitem{TVbook} T. Tao and V. Vu, Additive Combinatorics, Cambridge Univ. Press, 2006.


\bibitem{TVJohn} T. Tao and V. Vu, John-type theorems for generalized arithmetic progressions and iterated sumsets,  {\it Adv. Math}. 219 (2008), no. 2, 428-449.

\bibitem{TVpolynomial} T. Tao and V. Vu, Local universality of zeros of random polynomials, {\it to appear in IMRN}. 

\bibitem{To}  I. Todhunter, A history of the mathematical theory of probability, Stechert, New York, 1931.
\bibitem{Wa} Y. Wang, Bounds on the average number of real roots of a random algebraic equation, {\it Chinese Ann. Math.} Ser. A 4 (1983), 601-605.

\bibitem{Wil} J. E. Wilkins, An asymptotic expansion for the expected number of real zeros of a random polynomial, Proc. Amer. Math. Soc. 103 (1988), 1249-1258.




\end{thebibliography}
\end{document}